\documentclass[final,3p,times]{elsarticle}

\usepackage{amssymb}
\usepackage{amsmath}
\usepackage{amsthm}

\usepackage[]{graphicx}
\usepackage[]{color}
\usepackage{alltt}
\usepackage{array}
\usepackage{amsfonts}
\usepackage{bbm}
\usepackage{todonotes}
\usepackage{mathtools}
\usepackage{enumitem}
\usepackage{tikz}
\usetikzlibrary{shapes,arrows,decorations.pathmorphing,graphs,positioning,backgrounds}
\usepackage{subfigure}

\usepackage{hyperref}
\hypersetup{
  colorlinks = true,
  linkcolor ={blue},
  urlcolor = {blue},
  citecolor ={blue}}
\usepackage[capitalize,nameinlink]{cleveref}
  
\crefname{exmp}{Example}{Examples}
\Crefname{exmp}{Example}{Examples}
\usepackage{crossreftools}

\usepackage{amsthm,thmtools}

\theoremstyle{plain}

\newtheorem{theorem}{Theorem}

\newtheorem{proposition}{Proposition}
\newtheorem{lemma}{Lemma}

\theoremstyle{definition}
\newtheorem{defn}{Definition}

\newcommand{\myparagraph}[1]{\paragraph{#1}}

\usepackage{ifthen}
\let\oldtextcolor\textcolor
\renewcommand{\textcolor}[2]{\ifthenelse{\equal{#1}{red}}{\oldtextcolor{black}{#2}}{\oldtextcolor{#1}{#2}}}
\let\oldcolor\color
\renewcommand{\color}[1]{\ifthenelse{\equal{#1}{red}}{\oldcolor{black}}{\oldcolor{#1}}}

\usepackage{bm}
\usepackage{dsfont}

\newcommand{\ind}{\mathds{1}}
\newcommand{\R}{\mathbb{R}}

\providecommand{\P}{}
\renewcommand{\P}{\mathbb{P}}

\newcommand{\E}{\mathbb{E}}

\newcommand{\bnull}{\bm{0}}
\newcommand{\ba}{\bm{a}}

\newcommand{\bu}{\bm{u}}

\newcommand{\bx}{\bm{x}}

\newcommand{\bU}{\bm{U}}

\newcommand{\bX}{\bm{X}}
\newcommand{\bY}{\bm{Y}}

\newcommand{\Bcal}{\mathcal{B}}

\newcommand{\Fcal}{\mathcal{F}}
\newcommand{\Gcal}{\mathcal{G}}

\newcommand{\Ncal}{\mathcal{N}}

\newcommand{\Vcal}{\mathcal{V}}

\newcommand{\Xcal}{\mathcal{X}}

\newcommand{\eps}{\varepsilon}

\newcommand{\bDelta}{\bm{\Delta}}

\newcommand{\bdeta}{\bm{\eta}}
\newcommand{\btheta}{\bm{\theta}}

\newcommand{\bnu}{\bm{\nu}}

\newcommand{\bxi}{\bm{\xi}}

\newcommand{\brho}{\bm{\rho}}

\newcommand{\0}{\bm{0}}

\newcommand{\sign}{\operatorname{sign}}

\renewcommand{\bar}{\overline}

\newcommand{\hbtheta}{\bm{\hat{\theta}}}
\newcommand{\hbdeta}{\bm{\hat{\eta}}}

\newcommand{\htheta}{\hat{\theta}}

\newcommand{\hbu}{\bm{\hat{u}}}
\newcommand{\hbU}{\bm{\widehat{U}}}

\newcommand{\btu}{\bm{\tilde{u}}}
\newcommand{\btU}{\bm{\tilde{U}}}

\newcommand{\bttheta}{\bm{\tilde{\theta}}}

\newcommand{\lf}{\left}
\newcommand{\ri}{\right}
 \providecommand{\Pr}{}
\renewcommand{\Pr}{\mathbb{P}}

\newcommand{\cov}{{\mathds{C}\mathrm{ov}}}

\newcommand{\sumin}{\sum_{i = 1}^n}

\makeatletter
\def\blfootnote{\gdef\@thefnmark{}\@footnotetext}
\makeatother

\renewcommand{\citet}{\citep}

\usepackage{lineno}

\begin{document}

\begin{frontmatter}

\title{Properties of stepwise parameter estimation in high-dimensional vine copulas}

\author[label1]{Jana Gauss} 

\author[label1,label2]{Thomas Nagler}

\affiliation[label1]{organization={Department of Statistics},city={Munich},
country={Germany}}
            
\affiliation[label2]{organization={Munich Center for Machine Learning},city={Munich},
country={Germany}}

\begin{abstract}
The increasing use of vine copulas in high-dimensional settings, where the number of parameters is often of the same order as the sample size, calls for asymptotic theory beyond the traditional fixed-$p$, large-$n$ framework.
We establish consistency and asymptotic normality of the stepwise maximum likelihood estimator for vine copulas when the number of parameters diverges as $n \to \infty$.
Our theoretical results cover both parametric and nonparametric estimation of the marginal distributions, as well as truncated vines, and are also applicable to general estimation problems, particularly other sequential procedures.
Numerical experiments suggest that the derived assumptions are satisfied if the pair copulas in higher trees converge to independence copulas sufficiently fast.
A simulation study substantiates these findings and identifies settings in which estimation becomes challenging.
In particular, the vine structure strongly affects estimation accuracy, with D-vines being more difficult to estimate than C-vines, and estimates in Gumbel vines exhibit substantially larger biases than those in Gaussian vines.
 \end{abstract}

\begin{keyword}
vine copula \sep pair copula construction  \sep sequential maximum likelihood  \sep asymptotic  \sep high-dimensional

\end{keyword}

\end{frontmatter}

\section{Introduction}\label{sec:intro}
 
The increasing availability of complex, high-dimensional data necessitates flexible methods to describe and estimate dependence between random variables.
Copulas enable the modeling of the dependence structure independently of the marginal distributions, thereby providing a framework for constructing multivariate distributions beyond the normal and Student's $t$-distribution (see \citet{Nelsen2007} and \citet{Genest2007} for introductions).
This decomposition allows the combination of arbitrary, possibly heavy-tailed or skewed, univariate distributions into a multivariate distribution with an arbitrary depedence structure.
Although numerous bivariate parametric copula families can capture asymmetric dependence and heavy tails, such flexibility is not readily available in higher dimensions.

Vine copulas provide a versatile approach for modeling multivariate distributions by decomposing a copula into a hierarchical structure of bivariate copulas \citep{Joe96, Bedford2001, Bedford2002}.
Applications include finance and insurance \citep{Erhardt2012, Dissmann2013, Aas2016}, medical sciences \citep{Stober2015}, weather forecasting \citep{Moller2018}, and hydrology \citep{Haff2015}.
Most of them use the sequential maximum-likelihood-based approach proposed by \citet{Aas2009} for the parametric estimation of vine copulas.
While the joint maximum-likelihood estimator becomes computationally infeasible in high dimensions, the stepwise estimator exploits the hierarchical structure of the model, thereby substantially reducing the computational complexity.
The asymptotic properties of this estimator in finite dimensions have been studied in \citet{Haff13} and \citet{Stober2013}.

In classical settings, including many of the aforementioned references, the copula dimension typically does not exceed $d = 15$, while the sample size is larger than $n = 1000$.
As the dimension increases, estimation rapidly becomes challenging, since the number of parameters to estimate is approximately $p \approx 0.5 d^2$ if each pair copula has one parameter.
Comparatively high dimensions are not uncommon, see, for instance, \citet[$d = 64$, $p \approx 2000$, $n \approx 7500$]{Haff2015} and the extreme value copulas in \citet[$d = 29$, $p \approx 380$, $n < 500$]{Kiri24}.
In such applications, the validity of theoretical guarantees for finite-dimensional inference remains unclear.
Existing work on high-dimensional vine copulas \citep[see][Section 6.2 for an overview]{CzadoNagler} primarily focuses on sparse models such as truncated vines, especially the choice of an appropriate truncation level \citep{Kurowicka2010, Brechmann2012, Brechmann2015b}, and thresholded vine copulas \citep{Nagler2019}, as well as connections between vines and graphical models, e.g., directed acyclic graphs \citep{Mueller2018, Mueller2019, Mueller2019b}.
The estimation of covariance matrices of Gaussian copulas when the dimension exceeds the sample size is analyzed in \citet{Xue2014}, however, their results provide limited insight into the estimation of vine copulas.
The results on model selection in \citet{Nagler2019} rely on the assumption of consistent parameter estimation when the dimension $d$ of the vine copula diverges, which has yet not been proven. 

Our main contribution is to provide theoretical results for the sequential estimator by \citet{Aas2009} when $p \to \infty$.
The necessary background on (vine) copulas is presented in \cref{sec:background}.
In \cref{sec:TheoResults}, we establish consistency and asymptotic normality of the sequential estimator, including both parametric and nonparametric estimation of the margins and a discussion of the required assumptions.
Although our primary motivation lies in the estimation of parametric, simplified vines, the results are also applicable to non-simplified vines and nonparametric approaches involving splines for example.
Moreover, the results are not specific to vine copulas and can also be applied to other sequential procedures as well as general estimation problems.
The theoretical results are complemented by a simulation study in \cref{sec:Sim}.
The article concludes with a discussion and some final remarks in \cref{sec:conclusion}.
All proofs can be found in the Appendix, while the supplement contains both additional theoretical considerations and a numerical validation of the assumptions as well as additional figures for the simulation study.
 
\section{Background}

\label{sec:background}

\subsection{Copulas}

Consider an \textcolor{red}{absolutely} continuous random vector $\bX = (X_1, \ldots, X_d) \in \Xcal \subseteq  \R^d$ with joint distribution $F$ and marginal distributions $F_1, \ldots, F_d$. 
Sklar's theorem \citep{Sklar} states that any multivariate distribution can be expressed as
\[
F(x_1, \ldots, x_d) = C\left(F_1(x_1), \ldots, F_d(x_d) \right),
\]
where $C \colon [0,1]^d \mapsto [0,1]$ is a \emph{copula} that captures the dependence structure among the components of $\bX$.
A copula is a multivariate distribution function on $[0,1]^d$ with uniform marginals \citep{Czado2019}.
\textcolor{red}{Assuming the joint density $f$ exists everywhere}, it can be written as $f(x_1, \ldots, x_d) = c\lf(F_1(x_1), \ldots, F_d(x_d)\ri) f_1(x_1) \cdots f_d(x_d)$,
where $c(u_1, \ldots, u_d) \coloneqq \frac{\partial^d}{\partial u_1 \cdots \partial u_d} C(u_1, \ldots, u_d)$ denotes the \emph{copula density} and $f_1, \ldots, f_d$ are the marginal densities \citep{Czado2019}.

\subsection{Regular Vines and Vine Copulas}

Vine copulas provide a flexible framework for modeling multivariate dependence structures by decomposing a copula into multiple bivariate copulas (also referred to as \emph{pair copulas}) using conditioning.
For a comprehensive introduction, we refer to \citet{Czado2019}, while \citet{CzadoNagler} provide a concise overview.
The underlying idea goes back to \citet{Joe96} and was formalized by \citet{Bedford2001, Bedford2002}:
any valid factorization of a multivariate density into marginal densities and bivariate conditional densities can be represented as a \emph{regular vine (R-vine) tree structure}.
Informally, an R-vine on $d$ elements is a nested sequence of trees $\Vcal = (T_1, \ldots, T_{d-1})$, where the edges of each tree become the nodes of the subsequent tree and $T_1$ has node set $N_1 = \{ 1, \ldots, d\}$.
\textcolor{red}{In addition, the \emph{proximity condition} must be satisfied, which states that an edge between nodes $a, b$ in $T_j$ is only possible if $a$ and $b$ (which are edges in $T_{j-1}$) share a common node in $T_{j-1}$ \citep{Czado2019}.}
This condition ensures the validity of the following pair copula construction:

\begin{defn}[R-vine specification \citep{Bedford2002}]
A \emph{regular vine} (\emph{R-vine}) specification is a triplet $(\Fcal, \Vcal, \Bcal)$, where
\begin{enumerate}
\item $\Fcal = (F_1, \ldots, F_d)$ is a vector of continuous invertible distribution functions.
\item $\Vcal$ is an R-vine tree sequence on $d$ elements with edge sets $E_t, t = 1, \ldots, d_n - 1$.
\item \textcolor{red}{$\Bcal = \{ \Bcal_e: e \in E_t, t = 1, \ldots d-1\}$, where $\Bcal_e$ is a collection $\{ C_e(\cdot, \cdot; \bx ) : \bx \in \R^{t-1}\}$ of bivariate copulas.}  
\end{enumerate}
\end{defn}
In the R-vine tree sequence, each edge $e$ can be labeled as $e = (a_e, b_e ; D_e)$, where $a_e$ and $b_e$ are the \emph{conditioned variables} and $D_e$ is the \emph{conditioning set}.
Denote by $c_{a_e, b_e; D_e}(\cdot, \cdot ;\bx_{D_e})$ the density of the copula $C_e(\cdot, \cdot ;\bx_{D_e})$, also written as $C_{a_e, b_e ; D_e}$.

It was shown by \citet{Bedford2002} that for a triplet $(\Fcal, \Vcal, \Bcal)$ satisfying the above conditions, there exists a valid $d$-dimensional distribution $F$ with density 
\[
f(x_1, \ldots, x_d) =  f_1(x_1) \cdots f_d(x_d) \prod_{t = 1}^{d-1} \prod_{e \in E_t} c_{a_e, b_e; D_e} \left(F_{a_e | D_e}(x_{a_e} | \bx_{D_e}), F_{b_e|D_e}(x_{b_e} | \bx_{D_e}); \bx_{D_e} \right),
\]
such that for each edge $e\in E_t,t = 1, \ldots, d-1$, the copula of the bivariate conditional distribution $(X_{a_e}, X_{b_e}) \mid \bX_{D_e} = \bx_{D_e}$ is given by $C_{a_e, b_e ; D_e}$, i.e.,
\[
F_{a_e, b_e | D_e}(x_{a_e}, x_{b_e} | \bx_{D_e}) = C_{a_e, b_e ; D_e}\left( F_{a_e | D_e}(x_{a_e} | \bx_{D_e}), F_{b_e|D_e}(x_{b_e} | \bx_{D_e}) ; \bx_{D_e}\right).
\]
Furthermore, the one-dimensional margins of $F$ are given by $F_1, \ldots , F_d$.

One often employs the so-called \emph{simplifying assumption}: the copula $C_{a_e, b_e ; D_e}$ does not depend on the specific value of $\bX_{D_e}$. 
Since our theory is not limited to simplified vines, we consider the general, non-simplified case in the theoretical part of the paper.

Frequently, interest lies primarily in the copula density, i.e., the density of the random vector $\bU =  F(\bX) = (F_1(X_1), \ldots, F_d(X_d))$, which is given by
\[
c(\bu) = \prod_{t = 1}^{d-1} \prod_{e \in E_t} c_{a_e, b_e; D_e} \left(u_{a_e | D_e}, u_{b_e|D_e}; \bu_{D_e} \right) ,
\]
where $u_{a_e | D_e} = C_{a_e | D_e}(u_{a_e} | \bu_{D_e})$, and $C_{a_e | D_e}$ denotes the conditional distribution of $U_{a_e}$ given $\bU_{D_e}$.
The aforementioned proximity condition guarantees that these distributions can be computed recursively.

Two common special cases of R-vine structures are illustrated in \cref{fig:c_d_vine}.
In a \emph{drawable} or \emph{D-vine} (\textbf{a}), each tree is a path.
Due to the proximity condition, the entire tree structure is determined by the first tree $T_1$, which, for example, may consist of the edges $(1,2), (2,3), \ldots, (d-1, d)$.
The second special case is the \emph{canonical} or \emph{C-vine} (\textbf{b}), in which each tree has a root node that is connected to all other nodes.

\begin{figure}
    \centering
\renewcommand{\thesubfigure}{\textbf{a}}
    \subfigure[]{
    \begin{tikzpicture}[scale=0.85]
        \tikzstyle{every node}=[ellipse, minimum width=20pt,align=center,scale=0.68]
        \node[shape=ellipse,draw=white] (0) at (0.3, 5.5) {};
        \node[shape=ellipse,draw=black,fill=black!15] (1) at (0.5, 5.5) {$1$};
        \node[shape=ellipse,draw=black,fill=black!15] (2) at (2, 5.5) {$2$};
        \node[shape=ellipse,draw=black,fill=black!15] (3) at (3.5, 5.5) {$3$};
        \node[shape=ellipse,draw=black,fill=black!15] (4) at (5,5.5) {$4$};

        \node[shape=ellipse,draw=black,fill=black!15] (6) at (1.25,4.5) {$1,2$};
        \node[shape=ellipse,draw=black,fill=black!15] (7) at (2.75,4.5) {$2,3$};
        \node[shape=ellipse,draw=black,fill=black!15] (8) at (4.25,4.5) {$3,4$};

        \node[shape=ellipse,draw=black,fill=black!15] (10) at (2,3.5)     {$1,3;2$};
        \node[shape=ellipse,draw=black,fill=black!15] (11) at (3.5,3.5)     {$2,4;3$};

        \path [-] (1)  edge node[above] {$1,2$} (2);
        \path [-] (2)  edge node[above] {$2,3$} (3);
        \path [-] (3)  edge node[above] {$3,4$} (4);

        \path [-] (6)  edge node[above] {$1,3;2$} (7);
        \path [-] (7)  edge node[above] {$2,4;3$} (8);

        \path [-] (10)  edge node[above=0.1cm] {$1,4;2,3$} (11);
    \end{tikzpicture}} 
\renewcommand{\thesubfigure}{\textbf{b}}
    \subfigure[]{
    \begin{tikzpicture}[scale=0.85]
        \tikzstyle{every node}=[ellipse, minimum width=20pt,align=center,scale=0.68]
        \node[shape=ellipse,draw=white] (0) at (0.3, 5.5) {};
        \node[shape=ellipse,draw=black,fill=black!15] (1) at (0.5, 5.5) {$1$};
        \node[shape=ellipse,draw=black,fill=black!15] (2) at (5, 6) {$2$};
        \node[shape=ellipse,draw=black,fill=black!15] (3) at (5, 5.5) {$3$};
        \node[shape=ellipse,draw=black,fill=black!15] (4) at (5,5) {$4$};

        \node[shape=ellipse,draw=black,fill=black!15] (6) at (1.25,4.5) {$1,3$};
        \node[shape=ellipse,draw=black,fill=black!15] (7) at (2.75,4.5) {$1,2$};
        \node[shape=ellipse,draw=black,fill=black!15] (8) at (4.25,4.5) {$1,4$};

        \node[shape=ellipse,draw=black,fill=black!15] (10) at (2,3.5)     {$2,3;1$};
        \node[shape=ellipse,draw=black,fill=black!15] (11) at (3.5,3.5)     {$2,4;1$};

        \path [-] (1)  edge node[at end, above left, pos = 0.8] {$1,2$} (2);
        \path [-] (1)  edge node[at end, above left =-0.05cm, pos = 0.85] {$1,3$} (3);
        \path [-] (1)  edge node[at end, above left, pos = 0.8] {$1,4$} (4);

        \path [-] (6)  edge node[above] {$2,3;1$} (7);
        \path [-] (7)  edge node[above] {$2,4;1$} (8);

        \path [-] (10)  edge node[above=0.1cm] {$3,4;1,2$} (11);
    \end{tikzpicture}}
    \hfill
      \caption{Graphical representations of two R-vine structures in four dimensions: D-vine (\textbf{a}) and C-vine (\textbf{b}).
  Each edge in the three trees denotes a pair copula.
  }
  \label{fig:c_d_vine}
\end{figure}

\section{Theoretical Results on Stepwise Maximum Likelihood Estimation in Vine Copulas}
\label{sec:TheoResults}
\subsection{Known Margins}

\label{sec:EstimKnownM}

We first assume that the margins $F_1, \ldots, F_d$ are known.
This makes it easier to study the assumption on the vine copula whose parameters are estimated.

Given parametric models $c_e(\cdot, \cdot; \btheta_e)$ for all edges $e$ in the vine, let $\btheta = (\btheta_e)_{e \in E_t, t = 1, \ldots, d-1}$ be the stacked parameter vector.
\textcolor{red}{Denote by $E_{<t}$ the set of edges from trees $T_j, j < t$ and define $\btheta_{E_{<t}} = (\btheta_e)_{e \in E_{<t}}$.}
The log-likelihood is then given by
\begin{align*}
\ell(\btheta; \bu)  & =  \sum_{t= 1}^{d-1} \sum_{e \in E_t} \ln c_{a_e, b_e; D_e} \left(C_{a_e | D_e}(u_{a_e} | \bu_{D_e};  \btheta_{E_{<t}}), C_{b_e | D_e}(u_{b_e} | \bu_{D_e};  \btheta_{E_{<t}});  \bu_{D_e},  \btheta_e\right) .
\end{align*}
The vine tree structure ensures that all parameters involved in the recursive computation of the conditional distribution functions belong to previous trees and therefore to a subvector of $ \btheta_{E_{<t}}$.

Given an \textit{iid} sample $\bU_1, \ldots, \bU_n$ from this model, $\btheta$ can in principle be estimated by maximizing the log-likelihood.
However, due to the recursive structure of the conditional copula distributions $C_{a_e | D_e}$, the MLE $\hbtheta^{\text{ML}}$ is computationally infeasible in high dimensions \citep{Haff13}.
The solution is a sequential approach, the stepwise MLE \citep{Aas2009, Haff13}: 
The parameters of each bivariate copula are estimated separately, starting with the first tree $T_1$ and proceeding to tree $T_{d-1}$:
\[
\hbtheta_e =  \arg \max_{\btheta_e} \sumin  \ln c_{a_e, b_e; D_e} \left(C_{a_e | D_e}(U_{i, a_e} | \bU_{i, D_e}; \hbtheta_{E_{<t(e)}}), C_{b_e|D_e}(U_{i, b_e} | \bU_{i, D_e}; \hbtheta_{E_{<t(e)}}); \bU_{D_e}, \btheta_e\right),
\]
where $t(e)$ denotes the tree containing $e$.
\textcolor{red}{If the stepwise MLE with known margins does not lie at the boundary of the parameter space, it solves the system of equations}
\begin{align}
\label{eq:estim1}
\sumin \phi(\bU_i;  \hbtheta) = \bnull, \quad \text{where} \quad \phi(\bu; \btheta) \coloneqq
\begin{pmatrix}
\Big( \nabla_{\btheta_e}  \ln  c_{a_e, b_e} (\bu, \btheta_e) \Big)_{e \in E_{1}}  \\
\Big( \nabla_{\btheta_e}  \ln c_{a_e, b_e; D_e} (\bu, \btheta_{E_{<t(e)}}, \btheta_e) \Big)_{e \in E_{2}}  \\
\vdots \\
\Big( \nabla_{\btheta_e}  \ln  c_{a_e, b_e; D_e} (\bu, \btheta_{E_{<t(e)}}, \btheta_e)  \Big)_{e \in E_{d-1}}  
\end{pmatrix},
\end{align}
where $\ln c_{a_e, b_e; D_e}(\bu, \btheta_{E_{<t(e)}} , \btheta_e) =  \ln c_{a_e, b_e; D_e} (C_{a_e | D_e}(u_{a_e} | \bu_{D_e};  \btheta_{E_{<t(e)}}), C_{b_e|D_e}(u_{b_e} | \bu_{D_e};  \btheta_{E_{<t(e)}});  \bu_{D_e}, \btheta_e )$.
\textcolor{red}{From now on, we consider $\hbtheta_U$ that solves $\sumin \phi(\bU_i;  \hbtheta_U) = \bnull$} and define the pseudo-true value $\btheta^*$ as the solution to $\E[ \phi(\bU; \btheta^*) ] = \bnull$.
If the model is correctly specified, these pseudo-true values coincide with the true parameters.

\subsubsection{Consistency}

\label{sec:EstimKnownMAsymp}

We now allow the number of parameters $p_n$ to diverge as the sample size $n$ tends to $\infty$.
Denote $\btheta = (\theta_1, \ldots, \theta_{p_n}) \in \R^{p_n}$.
If each pair copula has one parameter, $p_n$ coincides with the number of pair copulas, i.e., $p_n = d_n (d_n - 1)/2$, however, our results are formulated in terms of $p_n$ and make no explicit assumption on the relation between $d_n$ and $p_n$.
We assume that the entries of $\btheta$ are ordered as $\btheta = ((\btheta_e)_{e \in E_1}, (\btheta_e)_{e \in E_2}, \ldots, (\btheta_e)_{e \in E_{d_n - 1}})$ and that the $k$-th entry in $\phi$, denoted by $\phi(\bu; \btheta)_k$, is the derivative of the respective copula density with respect to $\theta_k$.
Define the $p_n \times p_n$ matrices
\[
I(\btheta) \coloneqq \text{Cov}[\phi(\bU; \btheta)], \quad J(\btheta)  \coloneqq \nabla_{\btheta} \E[\phi(\bU; \btheta)] = \left( \frac{\partial}{\partial \theta_j } \E[\phi(\bU; \btheta)_k] \right)_{k, j = 1, \ldots, p_n}.
\]
We assume that derivative and expectation can be interchanged, so $J(\btheta)  = \E [\nabla_{\btheta} \phi(\bU; \btheta)]$.
Due to the hierarchical structure of the model, $J(\btheta)$ is a lower block triangular matrix and, more precisely, a lower triangular matrix if each pair copula has a single parameter.

The parameter $\btheta$, functions $\phi(\btheta), I(\btheta), J(\btheta)$ and the support and distribution of the $\bU_i$ all depend on $n$, but we omit the index $n$ to improve readability.
Throughout the paper, $\| \cdot \|$ denotes the Euclidean norm for vectors, and the spectral norm  $\| A \| =\sup_{\| \bx \| = 1} \| A\bx\|$ for matrices.
$\| \cdot \|_{\infty}$ denotes the maximum norm $\| \bx \|_{\infty} = \max \{ |x_1| , \ldots, |x_p | \}$ and $\| \bx \|_0$ denotes the number of non-zero entries in $\bx$.

   \newcounter{assump}
   \renewcommand{\theassump}{(A\arabic{assump})}
   
   \newcounter{assumpstar}
\renewcommand{\theassumpstar}{(A\arabic{assump}*)}
   
\newcommand{\itemA}{\refstepcounter{assump}\item[\theassump]}

\newcommand{\itemStar}{\setcounter{assumpstar}{\value{assump}}\refstepcounter{assumpstar}\item[\theassumpstar]}
Denote $r_n = \sqrt{ \ln d_n / n}$ and let $\alpha_{j, n}, j = 1, \ldots, p_n$ be positive sequences that are bounded away from zero and denote $\alpha_n\coloneqq\max_{1 \le j \le p_n} \alpha_{j,n}$.
\textcolor{red}{Let $\Theta_n \subset \R^{p_n}$ be a sequence of sets such that there is a sequence of real numbers $C_n \to \infty$ such that $\Theta_n \supseteq \{ \btheta: | \theta_j - \theta^*_j | \le r_n C_n \alpha_{j,n} \forall j = 1, \ldots, p_n \}$.} 
Denote $\Theta_n^\Delta \coloneqq \{ \bDelta: |\Delta_j | \le \alpha_{j,n}  \forall j = 1, \ldots, p_n\}$.

\begin{enumerate}[label=(A\arabic*), resume=assump]

\itemA   \label{A:ConvRate_new} It holds that $\max_{k = 1, \ldots, p_n} \E[\phi(\bU; \btheta^*)^2_k] = O(1).$
  \itemA \label{A:phi-moments} 
  It holds that $ \Pr\Big(\|\phi(\bU; \btheta^*)\|_\infty >  \sqrt{\frac{\sigma_n^2 n}{4\ln p_n}} \Big) = o(1/n),$ where $\displaystyle  \sigma_n^2 = \max_{k = 1, \ldots, p_n} \E[\phi(\bU; \btheta^*)^2_k] $.
  \itemA \label{A:curvature}  There exists a $c > 0$ such that $\displaystyle \limsup_{n \to \infty}   \max_{1 \le j \le p_n} \sup_{ \bDelta \in \Theta_n^\Delta , | \Delta_j | = \alpha_{j,n}} (r_n C_n \alpha_{j,n})^{-1} \sign(\Delta_j) \, \E \lf[\phi(\bU; \btheta^* + r_n C_n \bDelta)_j \ri]  \le -c$.
  \itemA \label{A:emp_proc} There are sequences $M_n = o(\sqrt{n/(k_n + \ln p_n)})$ and $D_n = o(n/(k_n + \ln p_n))$ such that
  \begin{align*}
   \max_{1 \le j \le p_n} \sup_{\btheta  \in \Theta_n} \E \lf[ \lf(\sum_{k=1}^{p_n} \lf| \frac{\alpha_{k,n}}{\alpha_{j,n}} \frac{\partial}{\partial \theta_k} \phi(\bU; \btheta)_j \ri|  \ri)^2  \ri] \le M_n^2 \quad \text{and} \quad
   \P\lf(  \max_{1 \le j \le p_n} \sup_{\btheta \in \Theta_n}\sum_{k=1}^{p_n} \lf| \frac{\alpha_{k,n}}{\alpha_{j,n}} \frac{\partial}{\partial \theta_k} \phi(\bU; \btheta)_j \ri|    > D_n\ri) = o(1/n),
  \end{align*}
  where $k_n$ is a sequence such that
  $
\max_{1 \le j \le p_n} \sup_{ \btheta \in \Theta_n, \bu \in \R^{d_n}} \| \nabla_{\btheta} \phi(\bu; \btheta)_j\|_0 = k_n.
$
  \end{enumerate}
  Assumption \ref{A:phi-moments} is a tail condition which, together with \ref{A:ConvRate_new}, guarantees that $\| \frac 1 n \sumin \phi (\bU_i; \btheta^*) \|_{\infty} = O_p(\sqrt{\ln p_n / n})$ and imposes mild restrictions on the rate of growth of $p_n$.
  \textcolor{red}{By Markov's inequality and the union bound, the stronger condition $\max_{1 \le k \le p_n} \E[ \phi (\bU; \btheta^*)_k^4] = O(1)$ implies both \ref{A:ConvRate_new} and \ref{A:phi-moments} provided $p_n (\ln p_n)^2 / n \to 0$.}
  Assumption \ref{A:curvature} ensures identifiability and is discussed in more detail below.
  Since this is a population-level condition, \ref{A:emp_proc} guarantees sufficiently fast convergence of the empirical counterpart, thereby implicitly restricting the rate at which $p_n$ can grow.
  
\begin{theorem}[Consistency with known margins]
  \label{theorem_cons_new2}
  Under assumptions \ref{A:ConvRate_new}--\ref{A:emp_proc}, with probability tending to 1, the sets $\Theta_n$ contain at least one solution $\hbtheta_U$ of the estimating equation \eqref{eq:estim1} that satisfies
  \[
  \max_{1 \le j \le p_n} \frac{| \hat{\theta}_{U,j} - \theta^*_j |}{\alpha_{j,n}} =  O_p\lf( \sqrt{\frac{\ln p_n}{n}} \ri).
  \]
\end{theorem}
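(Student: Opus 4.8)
The plan is to establish existence of a root of the estimating equation \eqref{eq:estim1} inside a shrinking, coordinate‑rescaled box by a topological (Poincar\'e--Miranda) argument, and to read the rate off the size of that box. Write $\Psi_n(\btheta) = \frac1n \sumin \phi(\bU_i; \btheta)$ and reparametrize $\btheta = \btheta^* + r_n C \bDelta$ with $\bDelta \in \Theta_n^\Delta$, so that the box $\{\btheta : |\theta_j - \theta_j^*| \le r_n C \alpha_{j,n}\ \forall j\}$ corresponds to the fixed box $\Theta_n^\Delta = \prod_j [-\alpha_{j,n}, \alpha_{j,n}]$. Setting $g(\bDelta) \coloneqq \Psi_n(\btheta^* + r_n C \bDelta)$, which is continuous in $\bDelta$ since $\phi$ is differentiable in $\btheta$, a zero of $g$ in the interior of $\Theta_n^\Delta$ yields a solution $\hbtheta_U$ with $\max_j |\hat\theta_{U,j} - \theta^*_j|/\alpha_{j,n} \le r_n C$; since $r_n = \sqrt{\ln d_n/n} \asymp \sqrt{\ln p_n/n}$ (as $\ln d_n \asymp \ln p_n$ whenever $p_n$ is polynomial in $d_n$), this is the claimed rate once $C$ is taken large. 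The first step is to decompose $g_j = \E[\phi(\bU;\btheta)_j] + (\Psi_n - \E[\phi])(\btheta)_j$ into a population drift and an empirical fluctuation, and to show that for fixed large $C$ the drift dominates the fluctuation on the boundary faces of the box.

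The drift is exactly what \ref{A:curvature} controls: on the face $\Delta_j = \alpha_{j,n}$ it gives $\E[\phi_j] \le -c\,r_n C \alpha_{j,n}$, and on $\Delta_j = -\alpha_{j,n}$ it gives $\E[\phi_j] \ge c\,r_n C \alpha_{j,n}$, so the drift has sign opposite to $\Delta_j$ with magnitude at least $c\,r_n C \alpha_{j,n}$. It therefore suffices to show that the fluctuation is $o(r_n C \alpha_{j,n})$, uniformly over the box and over $j$, with probability tending to one; then $g_j < 0$ on the upper face and $g_j > 0$ on the lower face of each coordinate, and the Poincar\'e--Miranda theorem (the box version of the intermediate value theorem, a consequence of Brouwer's theorem, applied to $-g$) furnishes an interior zero, from which a measurable root can be selected by a standard argument.

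To bound the fluctuation I would split it, using $\E[\phi(\bU;\btheta^*)] = \bnull$, as $(\Psi_n - \E[\phi])(\btheta)_j = \frac1n\sumin \phi(\bU_i;\btheta^*)_j + \big[(\Psi_n - \E[\phi])(\btheta)_j - (\Psi_n - \E[\phi])(\btheta^*)_j\big]$. For the first term, \ref{A:ConvRate_new} and \ref{A:phi-moments} give, via truncation at the threshold in \ref{A:phi-moments} and a Bernstein inequality with a union bound over the $p_n$ coordinates, the maximal bound $\max_j |\frac1n\sumin \phi(\bU_i;\btheta^*)_j| = O_p(\sqrt{\ln p_n/n})$, which is negligible against $r_n C \alpha_{j,n}$ for $C$ large since the $\alpha_{j,n}$ are bounded away from zero. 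For the increment I would expand along the segment from $\btheta^*$ to $\btheta$; because the hierarchical structure makes $\phi_j$ depend only on $\theta_k$ with $k \le j$ (so $\partial\phi_j/\partial\theta_k = 0$ for $k > j$ and $J(\btheta)$ is lower triangular), the increment is bounded by $r_n C \alpha_{j,n}\sum_{k=1}^j \frac{\alpha_{k,n}}{\alpha_{j,n}}\big|\partial_{\theta_k}(\Psi_n - \E[\phi])_j\big|$. It then remains to show that this centered derivative sum is $o_p(1)$ uniformly over $\btheta \in \Theta_n$ and $j$, which is precisely what \ref{A:emp_proc} delivers: truncating each summand at $D_n$ (the tail bound together with a union bound over the $n$ observations shows no truncation occurs with probability $1-o(1)$), applying Bernstein's inequality with variance proxy $M_n^2$, and paying $\ln p_n$ for the maximum over $j$ and an effective dimension of order $k_n$ for a covering of the box, the rate conditions $M_n = o(\sqrt{n/(k_n+\ln p_n)})$ and $D_n = o(n/(k_n+\ln p_n))$ force the supremum to $o_p(1)$.

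The main obstacle is this last uniform control of the empirical process of derivatives over the continuum $\Theta_n$: unlike the pointwise bound at $\btheta^*$, it requires combining the sparsity $k_n$ of $\nabla_{\btheta}\phi_j$, the truncation at $D_n$, and a chaining/covering argument whose complexity is calibrated so that $k_n + \ln p_n$ appears in the exponent, matching the rate conditions in \ref{A:emp_proc}. Once this is in place, choosing $C$ large enough that the $O_p(\sqrt{\ln p_n/n})$ contribution from the first term is beaten by $\tfrac12 c\,r_n C \alpha_{j,n}$ and $n$ large enough that the increment contribution is below $\tfrac12 c$ of the drift, the drift dominates on all $2p_n$ faces simultaneously with probability tending to one, Poincar\'e--Miranda applies, and the resulting root lies in the box of radius $r_n C$, giving the stated $O_p(\sqrt{\ln p_n/n})$ rate.
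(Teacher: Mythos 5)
Your overall architecture is exactly the paper's: reparametrize to the box $\Theta_n^\Delta$, invoke Poincar\'e--Miranda on the faces, let \ref{A:curvature} supply a drift of size $c\,r_nC\alpha_{j,n}$ with the right sign, bound $\|\P_n\phi(\cdot;\btheta^*)\|_\infty=O_p(\sqrt{\ln p_n/n})$ via \ref{A:ConvRate_new}--\ref{A:phi-moments}, and absorb that term by taking $C$ large (the $\alpha_{j,n}$ being bounded away from zero). The gap is in your treatment of the increment $(\P_n-P)\bigl[\phi(\cdot;\btheta^*+r_nC\bDelta)_j-\phi(\cdot;\btheta^*)_j\bigr]$. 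You apply the mean value theorem to the \emph{centered empirical process} $\btheta\mapsto(\P_n-P)\phi(\cdot;\btheta)_j$, factor out $r_nC\alpha_{j,n}$, and then claim that $\sup_{\btheta\in\Theta_n}\max_j\sum_{k\le j}\tfrac{\alpha_{k,n}}{\alpha_{j,n}}\bigl|\partial_{\theta_k}(\P_n-P)\phi(\cdot;\btheta)_j\bigr|=o_p(1)$ is ``precisely what \ref{A:emp_proc} delivers.'' It is not: \ref{A:emp_proc} bounds the second moment (by $M_n^2$) and the tail (at level $D_n$) of the \emph{uncentered} derivative sums, and both $M_n$ and $D_n$ may diverge, so a triangle-inequality bound of the centered sum by the uncentered one gives only $O_p(M_n+D_n)$. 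Genuine uniform concentration of the centered derivative process over the continuum $\Theta_n$ would require chaining in $\btheta$, hence a modulus of continuity of $\nabla_{\btheta}\phi(\bu;\btheta)_j$ in $\btheta$ --- second-derivative information that is nowhere assumed; and even pointwise in $\btheta$, per-coordinate Bernstein plus a union bound over $k\le k_n$ yields roughly $\sqrt{k_n}\,M_n\sqrt{\ln p_n/n}$, which the stated rate conditions on $M_n$ do not force to zero.

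The repair --- and what the paper does in \cref{lem:emp_proc} --- is to apply the mean value theorem \emph{pathwise}, to $\bu\mapsto\phi(\bu;\cdot)_j$ for each fixed $\bu$, so that the normalized increment $f_{\bDelta,j}(\bu)=\alpha_{j,n}^{-1}[\phi(\bu;\btheta^*+\bDelta)_j-\phi(\bu;\btheta^*)_j]$ admits the \emph{small} envelope $c_nF_n(\bu)$ with $c_n=r_nC\to0$ and $F_n$ the derivative-sum envelope from \ref{A:emp_proc}. One then controls $\sup_{f\in\Fcal_n}|(\P_n-P)f|$ by a maximal inequality (Theorem 2.14.21 of van der Vaart and Wellner), with covering numbers of order $(2M_nc_n/\eps)^{k_n}$ per $j$ coming from the Lipschitz-in-$\bDelta$ property of $f_{\bDelta,j}$ --- which again uses only first derivatives, so no extra smoothness is needed. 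The point your route loses is the factor $c_n$ multiplying both the variance proxy ($c_n^2M_n^2$) and the truncated sup-norm ($c_nD_n$) of this class: that factor is what makes the final bound $c_n\bigl[M_n\sqrt{k_n+\ln p_n}/\sqrt n+D_n(k_n+\ln p_n)/n\bigr]=c_n\cdot o(1)$, i.e.\ $o_p(1)$ after dividing by $r_nC\alpha_{j,n}$, under exactly the rate conditions of \ref{A:emp_proc}. With that substitution your argument matches the paper's proof; as written, the key uniform step does not follow from the stated assumptions.
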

The estimator achieves the global rate of convergence $ \lVert \hbtheta_U - \btheta^* \rVert_{\infty} = O_p(\sqrt{\ln p_n / n})$ if $\alpha_n = \max_{1 \le j \le p_n} \alpha_{j,n}$ is bounded.
The sequences $\alpha_{j,n}$ allow for different rates of convergence for individual parameters:
While a single parameter in the first tree can be estimated at the standard $\sqrt{n}$ rate, parameters in higher trees may not achieve this rate, as their estimation depends on the noisy estimates from previous trees.
The strength of this dependence is controlled by \ref{A:curvature}:
For any $j$, it holds that
\begin{align*}
\sign(\Delta_j) \, \E \lf[\phi(\bU; \btheta^* + r_n C_n \bDelta)_j \ri]  
 = \sign(\Delta_j) \, \E \lf[\phi(\bU; \btheta^* + r_n C_n \bDelta)_j -\phi(\bU; \btheta^*)_j  \ri]  
 = \sign(\Delta_j) \, r_n C_n \bDelta^T \E\lf[\nabla_{\btheta} \phi(\bU; \bttheta)_j \ri]
\end{align*}
with some $\bttheta$ on the segment between $\btheta^*$ and $ \btheta^* + r_n C_n \bDelta$.
The sensitivity of the estimation of $\theta_j$ with respect to other parameters is encoded in $\nabla_{\btheta} \phi(\bU; \bttheta)_j $.
If the estimating equations $\phi(\bu; \btheta)$ consisted of independent estimation problems, only the $j$-th entry of $\nabla_{\btheta} \phi(\bU; \bttheta)_j $ would be non-zero and \ref{A:curvature} would simplify to $\E[\frac{\partial}{\partial \theta_j} \phi(\bU; \btheta)_j] \le -c < 0$ for all $j=1,\ldots, p_n$ and $\btheta$ in a neighborhood around $\btheta^*$.
In vine copulas however, estimation errors in earlier trees influence the estimation in subsequent trees.
Assumption \ref{A:curvature} guarantees that this sensitivity sufficiently weak so that each $\theta_j$ remains identifiable.
Pair copulas that are (close to) independence copulas facilitate estimation in later trees, since the influence of the estimation of $\theta_j^*$ on estimates in subsequent trees diminishes as the corresponding pair copula converges to the independence copula.
For simplicity, we assume from now on that $\theta^* = 0$ corresponds to the independence copula, as it is the case for Gaussian copulas. 

While $\alpha_{j,n} = 1$ is a suitable choice if $\theta^*_j \to 0$ sufficiently fast as $d_n \to \infty$,
\ref{A:curvature} together with $\alpha_{j,n} \to \infty$ allows for consistency results at the cost of a slower rate of convergence when the dependence in the vine decays more slowly.
Another special case is that of finite-dimensional vines:
As long as $\max_j \E[\frac{\partial}{\partial \theta_j}\phi(\bU; \btheta)_j] \le -c$ for some $c>0$, one can always choose sequences $\alpha_{j,n}$ such that \ref{A:curvature} is satisfied with $\alpha_n = O(1)$, since each  $\nabla_{\btheta} \phi(\bu; \btheta)_j $ has only finitely many non-zero entries.
The same holds for truncated vines with a fixed truncation level, provided that $\E[\frac{\partial}{\partial \theta_k} \phi(\bu; \btheta)_j]$ is uniformly bounded for all $j, k = 1, \ldots, p_n$, see also \cref{prop:trunc}.

In the supplement, we show results of a numerical validation of \ref{A:curvature}.
We set all true parameters in a given tree $t$ to the same value $\theta_t^*$.
The experiments suggest that, for Gaussian C- and D-vine with decreasing functions for $\theta_t^*$ such as $\theta_t^* = 0.5^t$, $\theta_t^* = 1/(t+1)$ and $\theta_t^* = 0.5 / \sqrt{t+1}$, \ref{A:curvature} is satisfied with $\alpha_{j,n} = 1$.
An exception is the D-vine with $\theta_t^* = 0.5 / \sqrt{t+1}$, where we need $\alpha_{j,n} = t_j$ (where $t_j$ denotes the tree of the $j$-th parameter) for \ref{A:curvature} to hold, resulting in the slower rate of convergence $\| \hbtheta - \btheta^* \|_{\infty} = O_p(\sqrt{d_n^2 \ln d_n / n})$.
\textcolor{red}{The supplement contains a more detailed discussion of \ref{A:curvature} for Gaussian D-vines and verifies (A1)--(A4) when all pair-copulas belong to the FGM family under the conditions $d_n^4 \ln d_n / n \to 0$ and $| \theta_t^* | \le 1/(t+1)^k, k > 1$.}

The main restrictions on the growth of $p_n$ are imposed by \ref{A:emp_proc}.
Similar to \ref{A:curvature}, the sequences $M_n$ and $D_n$ quantify the strength of dependence between the estimation of parameters from different trees.
Numerical experiments (see the supplement) suggest that $M_n^2 = O(p_n)$ and $D_n = O(p_n)$ for $\btheta^*$ small enough, which implies that $p_n^2/n \to 0$ is sufficient for consistency.

The sequence $k_n$ in \ref{A:emp_proc} can be interpreted as the maximum number of parameters affecting estimates in subsequent steps.
For an untruncated Vine, we typically have $k_n =O (p_n)$. 
For a vine truncated after tree $t_n$, there are $t_n d_n - t_n (t_n - 1)/2$ parameters to estimate, however, the estimation of any parameter $\theta_j$ depends on at most $O(t_n^2)$ parameters, thus $k_n = O(t_n^2)$.
 In particular, if the truncation level is fixed, we obtain $k_n = O(1)$.
 
If $\alpha_n = O(1)$, \cref{theorem_cons_new2} yields $\| \hbtheta_U - \btheta^* \|_2 = O_p( \sqrt{p_n \ln p_n / n})$, which is slightly worse than the optimal rate $\sqrt{p_n/ n}$.
Convergence at the optimal rate would follow from \citet[Theorem 1]{Gauss24}, however, this requires that the largest eigenvalue of $J(\btheta^*) + J(\btheta^*)^\top$ is negative and bounded away from zero.
This condition is not satisfied for many truncated C-vines (see Proposition 1 in the supplement).
We therefore establish convergence in $\| \cdot \|_\infty$ norm using the \emph{Poincar\'e-Miranda theorem}.
Notably, while the condition on $J(\btheta^*) + J(\btheta^*)^\top$ ignores the sequential estimation procedure, i.e., that $J(\btheta)$ is (block) triangular, this structure is exploited in \ref{A:curvature}.
\cref{theorem_cons_new2} is not specific to vines but can also be applied to other stepwise estimators as well as general estimating problems that can be expressed in the form $\sumin \phi (\bX_i; \hbtheta) = \0$ for some function $\phi$, e.g., a gradient.

\subsubsection{Asymptotic Normality}

Since the dimension of $\btheta$ increases with the sample size, asymptotic normality of $\hbtheta_U$ is formulated in terms of finite-dimensional projections.
Let $A_n \in \R^{q \times p_n}$ be some matrix and define $\tilde r_n =  \alpha_n\sqrt{p_n \ln p_n / n}$.

\begin{enumerate}[label=(A\arabic*), resume=assump]
  \itemA \label{A:Asymp} There exists a sequence $D_n = o(\sqrt{n}/ (\tilde r_n p_n))$ such that
        \begin{align*}
          \sup_{\| \bDelta \|, \| \bDelta' \| \le  \tilde r_n C_n}  \frac{\E\lf[ \| A_n [\phi(\bU; \btheta^* +  \bDelta) - \phi(\bU;\btheta^* +  \bDelta')]\|^2\ri]}{\| \bDelta - \bDelta'\|^2}                        & = o\lf( \frac{1}{\tilde r_n^2 p_n} \ri),     \\
          \Pr\lf( \sup_{\| \bDelta \|, \| \bDelta' \| \le \tilde r_n C_n} \frac{ \|A_n[\phi(\bU;\btheta^* +  \bDelta) - \phi(\bU;\btheta^* +  \bDelta')] \|}{\| \bDelta - \bDelta'\|}  >  D_n  \ri) & = o\lf(\frac 1 n \ri),                                 \\
          \sup_{ \bDelta \in \Theta_n^{\Delta}} \| A_n[J(\btheta^* +  r_n C_n  \bDelta) - J(\btheta^*)] \|    & = o\lf(\frac{1}{\sqrt{n} \tilde r_n}\ri).
        \end{align*}
  \itemA  \label{A:Asymp2} It holds that $\displaystyle \E \left[ \| A_n \phi(\bU; \btheta^*)\|^4 \right] =o(n).$
\end{enumerate}

Assumption \ref{A:Asymp} is a stochastic smoothness condition required to control fluctuations of the estimating equation and implicitely restricts the rate of growth of $p_n$.
\textcolor{red}{For example, if the left-hand side of the first equation is bounded, \ref{A:Asymp} requires $\tilde r_n^2 p_n \to 0$, i.e., $\alpha_n^2 p_n^2 \ln p_n / n \to 0$.}
The moment condition \ref{A:Asymp2} typically requires that $p_n^2/n \to 0$, e.g., if $\| A_n \| = O(1)$ and $\max_k \E[\phi(\bU; \btheta^*)_k^4] = O(1)$.
\begin{theorem}[Asymptotic normality with known margins]
  \label{theorem2}
  If conditions \ref{A:ConvRate_new}--\ref{A:Asymp2} hold for some matrix $A_n \in \R^{q \times p_n}$ for which $\Sigma = \lim_{n \to \infty} A_n I(\btheta^*)A_n^\top$ exists, $\hbtheta_U$ in \cref{theorem_cons_new2} satisfies
  \[
    \sqrt{n} A_n  J(\btheta^*)(\hbtheta_U - \btheta^*) \to_d \Ncal(\0, \Sigma).
  \]
\end{theorem}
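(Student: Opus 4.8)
The plan is to treat $\hbtheta_U$ as the solution of a $Z$-estimating equation and linearize it around the pseudo-true value $\btheta^*$, isolating a leading sum of i.i.d.\ terms to which a central limit theorem applies. Writing $\Phi_n(\btheta) = \frn\sumin \phi(\bU_i;\btheta)$ and using $\Phi_n(\hbtheta_U) = \bnull$ together with $\E[\phi(\bU;\btheta^*)] = \bnull$, I would decompose
\begin{align*}
\bnull = \Phi_n(\hbtheta_U) = \Phi_n(\btheta^*) + \E[\phi(\bU;\hbtheta_U)] + R_n ,
\end{align*}
where $R_n = \{\Phi_n(\hbtheta_U) - \E[\phi(\bU;\hbtheta_U)]\} - \{\Phi_n(\btheta^*) - \E[\phi(\bU;\btheta^*)]\}$ is a centred empirical-process remainder. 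Expanding the population term by the fundamental theorem of calculus and using $J = \nabla_\btheta\E[\phi]$ gives $\E[\phi(\bU;\hbtheta_U)] = J(\btheta^*)(\hbtheta_U - \btheta^*) + \rho_n$ with curvature remainder $\rho_n = \int_0^1 [J(\btheta^* + t(\hbtheta_U-\btheta^*)) - J(\btheta^*)]\,dt\,(\hbtheta_U-\btheta^*)$. Rearranging and multiplying by $-\sqrt{n}A_n$ yields the master identity
\begin{align*}
\sqrt{n}\,A_n J(\btheta^*)(\hbtheta_U - \btheta^*) = -\sqrt{n}\,A_n\Phi_n(\btheta^*) - \sqrt{n}\,A_n\rho_n - \sqrt{n}\,A_n R_n ,
\end{align*}
so it suffices to show that the first term converges to $\Ncal(\0,\Sigma)$ while the latter two are $o_p(1)$, after which Slutsky's theorem concludes.

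For the leading term, $-\sqrt{n}A_n\Phi_n(\btheta^*) = -\frwn\sumin A_n\phi(\bU_i;\btheta^*)$ is a normalized sum of i.i.d.\ mean-zero vectors in $\R^q$ with covariance $A_n I(\btheta^*)A_n^\top \to \Sigma$. I would invoke the Cramér--Wold device: for fixed $\bt \in \R^q$, the scalar triangular array $\bt^\top A_n\phi(\bU_i;\btheta^*)$ has variance $\bt^\top A_n I(\btheta^*)A_n^\top\bt \to \bt^\top\Sigma\bt$, and Lyapunov's condition (exponent four) reduces to $\E[|\bt^\top A_n\phi(\bU;\btheta^*)|^4]/n \le \|\bt\|^4\E[\|A_n\phi(\bU;\btheta^*)\|^4]/n = o(1)$, which is exactly \ref{A:Asymp2}. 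Hence $-\sqrt{n}A_n\Phi_n(\btheta^*)\to_d\Ncal(\0,\Sigma)$.

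The curvature remainder is dispatched using consistency and the smoothness of $J$. From \cref{theorem_cons_new2}, $|\hat\theta_{U,j}-\theta_j^*|\le \alpha_{j,n}\,O_p(\sqrt{\ln p_n/n})$ uniformly, whence $\|\hbtheta_U-\btheta^*\| \le \alpha_n\sqrt{p_n}\,O_p(\sqrt{\ln p_n/n}) = O_p(\tilde r_n)$ and, with probability tending to $1$, the whole segment from $\btheta^*$ to $\hbtheta_U$ lies in the region $\{\btheta^* + r_n C\bDelta : \bDelta \in \Theta_n^\Delta\}$ on which the third bound of \ref{A:Asymp} applies. Bounding $\|A_n\rho_n\| \le \sup_{t\in[0,1]}\|A_n[J(\btheta^*+t(\hbtheta_U-\btheta^*))-J(\btheta^*)]\|\cdot\|\hbtheta_U-\btheta^*\| = o(1/(\sqrt n\,\tilde r_n))\cdot O_p(\tilde r_n)$ then gives $\sqrt n\,\|A_n\rho_n\| = o_p(1)$.

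The main obstacle is the empirical-process remainder $\sqrt{n}A_n R_n = G_n(\hbtheta_U) - G_n(\btheta^*)$, where $G_n(\btheta) = \frwn\sumin A_n(\phi(\bU_i;\btheta)-\E[\phi(\bU;\btheta)])$, since this requires a \emph{stochastic-equicontinuity} statement in diverging dimension $p_n$ where classical Donsker/uniform-CLT machinery is unavailable. Restricting to the event $\{\|\hbtheta_U-\btheta^*\|\le \tilde r_n C\}$ (of probability $\to 1$), I would bound $\sup_{\|\bDelta\|\le \tilde r_n C}\|G_n(\btheta^*+\bDelta)-G_n(\btheta^*)\|$ by an explicit maximal inequality obtained through chaining over a net of the $\tilde r_n C$-ball in $\R^{p_n}$: the $L^2$-modulus bound (first display of \ref{A:Asymp}) controls the increment variances, the high-probability Lipschitz bound with constant $D_n$ (second display) controls the discretization error and allows a truncation so that a Bernstein-type tail applies, and the rates $o(1/(\tilde r_n^2 p_n))$ and $o(n/(\tilde r_n p_n))$ are calibrated precisely against the $\exp(O(p_n))$ covering number of the ball so that the resulting fluctuation is $o_p(1)$. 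This entropy-versus-modulus trade-off is the delicate core of the argument; once it is in place, combining the three bounds in the master identity delivers $\sqrt{n}A_n J(\btheta^*)(\hbtheta_U-\btheta^*)\to_d\Ncal(\0,\Sigma)$.
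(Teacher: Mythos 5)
Your proposal is correct and follows essentially the same route as the paper: the identical linearization of $\P_n\phi(\hbtheta_U)=\bnull$ into a leading i.i.d.\ sum, a curvature remainder controlled by the third display of \ref{A:Asymp}, and a stochastic-equicontinuity remainder controlled by the first two displays (the paper delegates this chaining step to an external lemma from \citet{Gauss24}), with the CLT for the leading term obtained from \ref{A:Asymp2} via a fourth-moment Lindeberg/Lyapunov condition just as you describe.
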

If $\btheta$ is finite-dimensional, choosing $A_n = J(\btheta^*)^{-1}$ yields $\sqrt{n}(\hbtheta_U - \btheta^*) \to_p \Ncal(\0, J(\btheta^*)^{-1} I(\btheta^*)J(\btheta^*)^{-\top})$.
In \cref{sec:Sim}, we present simulations of $\sqrt{n / p_n} \sum_{j=1}^{p_n} (\hat{\theta}_j - \theta^*_j)$ for different vine copula models.
This corresponds to the choice
\[
A_n = \frac{1}{\sqrt{p_n}} \begin{pmatrix} 1 & 1 & \ldots & 1 \end{pmatrix} J(\btheta^*)^{-1} \in \R^{1 \times p_n},
\text{ so} \quad
\Sigma = \lim_{n \to \infty} \frac{1}{p_n} \sum_{j=1}^{p_n} \sum_{k=1}^{p_n} \lf(  J(\btheta^*)^{-1} I(\btheta^*)J(\btheta^*)^{-\top}\ri)_{jk}.
\]

\textcolor{red}{For an FGM copula with $| \theta_t^* | \le 1/(t+1)^k, k > 1$, the conditions of \cref{theorem2} with hold if $d_n^6 (\ln d_n)^2 / n \to 0$ and $\| A_n \|_{\infty} = O(1)$, see Proposition 3 in the supplement.
The following proposition gives sufficient conditions for \cref{theorem_cons_new2} and \cref{theorem2} to hold for a 2-truncated vine.
Its proof, given in the appendix, illustrates how to derive moment conditions from the above stated assumptions.}
\textcolor{red}{\begin{proposition}\label{prop:trunc}
    Consider a $d_n$-dimensional 2-truncated vine\footnote{I.e., all pair copulas from the third tree onwards are set to independence copulas.} with $p_n$ parameters, arbitrary R-vine structure and families. Suppose $\max_{1 \le k \le p_n} \E[\phi(\bU; \btheta^*)_k^4] = O(1)$. Then, conditions (A1)--(A4) hold with sequences $\alpha_{j,n}$ with $\max_{1 \le j \le n} \alpha_{j,n} = O(1)$ if $p_n (\ln p_n)^2/n \to 0$ and
    \[
    \sup_{\btheta \in \Theta_n} \max_{1 \le k \le p_n} \E\left[ \frac{\partial}{\partial \theta_k} \phi(\bu; \btheta)_k \right] \le -c < 0, \qquad \max_{1 \le j, k \le p_n} \E \left[ \sup_{\btheta \in \Theta_n} \left( \frac{\partial}{\partial \theta_j}  \phi(\bU; \btheta)_k \right)^2 \right] = O(1).
    \]
    Conditions (A5)--(A6) hold if, additionally, $\| A_n \|_\infty = O(1)$, $p_n^2 (\ln p_n)^2 / n \to 0$, the number of non-zero terms in $\sum_{k=1}^{p_n} \frac{\partial}{\partial \theta_j} \phi(\bu; \btheta)_k$ is uniformly bouned, and
    \[
    \max_{1 \le j, k \le p_n} \E\left[ \sup_{\btheta \in \Theta_n} \left( \frac{\partial}{\partial \theta_j} \phi(\bU; \btheta)_k \right)^4 \right] =O(1), \qquad 
    \sup_{\btheta \in \Theta_n} \max_{1 \le j, k, l \le p_n} \E\left[ \frac{\partial^2}{\partial \theta_l \partial \theta_j} \phi(\bU; \btheta)_k \right] = O(1).
    \] 
\end{proposition}}
\textcolor{red}{The assumption on the number of non-zero terms in $\sum_{k=1}^{p_n} \frac{\partial}{\partial \theta_j} \phi(\bu; \btheta)_k$ is trivial for many truncated vine structures such as a D-vine, but excludes a C-vine. 
The proposition can easily be extended to $t$-truncated vines with finite $t$.}

\subsection{Parametric Estimation of Margins}

Assuming parametric models $F_l(x_l ;\bdeta_l)$, $l=1, \ldots, d$ for the marginals and denote $\bdeta = (\bdeta_1, \ldots, \bdeta_d)$, the log-likelihood of the model is given by
\begin{align*}
\ell(\bdeta, \btheta; \bx)  = & \sum_{l = 1}^d \ln  f_l(x_l; \bdeta_l)  + \sum_{t = 1}^{d-1} \sum_{e \in E_t} \ln c_{a_e, b_e; D_e} \left(F_{a_e | D_e}(x_{a_e} | \bx_{D_e}; \bdeta, \btheta_{S_a(e)}), F_{b_e|D_e}(x_{b_e} | \bx_{D_e}; \bdeta, \btheta_{S_b(e)}); \bx_{D_e},  \btheta_e\right) \\
 = & \ell_M(\bdeta; \bx)  + \ell_C(\bdeta, \btheta; \bx) .
\end{align*}
Both the joint MLE and the two-step \emph{inference for margins} approach by \citet{Joe05}, which maximizes $\ell_M$ and $\ell_C$ sequentially, are computationally too demanding for practical purposes.
One therefore again resorts to a sequential approach, slightly modifying the stepwise MLE presented in \cref{sec:EstimKnownM}:
first, $\bdeta$ is estimated by maximizing $\sumin \ell_M(\hbdeta; \bX_i)$.
Then, the parameters of each bivariate copula are estimated separately, plugging in the estimate $\hbdeta$.
Similar to the previous section, stepwise MLE $(\hbdeta, \hbtheta_\eta)$ solves the solution to the system of equations 
\begin{align}
\label{eq:estimPar}
\sumin \phi(\bX_i; \hbdeta, \hbtheta_\eta) = \bnull, \quad \text{where} \quad
 \phi(\bx; \bdeta, \btheta) \coloneqq
\begin{pmatrix}
 \nabla_{\bdeta}   \sum_{j = 1}^d \ln  f_j(x_j; \bdeta_j)  \\
\Big( \nabla_{\btheta_e}  \ln  c_{a_e, b_e} (\bx, \bdeta, \btheta_e) \Big)_{e \in E_{1}}  \\
\Big( \nabla_{\btheta_e}  \ln c_{a_e, b_e; D_e} (\bx, \bdeta,\btheta_{S_{a,b}(e)}, \btheta_e) \Big)_{e \in E_{2}}  \\
\vdots \\
\Big( \nabla_{\btheta_e}  \ln  c_{a_e, b_e; D_e} (\bx, \bdeta,\btheta_{S_{a,b}(e)}, \btheta_e)  \Big)_{e \in E_{d-1}}  
\end{pmatrix}.
\end{align}
Consistency and asymptotic normality of $(\hbdeta, \hbtheta_\eta)$ directly follow from \cref{theorem_cons_new2} and \cref{theorem2} by replacing $\btheta$ by $(\btheta, \bdeta)$, $\phi(\bu; \btheta)$ by $\phi(\bx; \bdeta, \btheta)$ and $\Theta_n$ by a sequence of sets $\tilde \Theta_n \subset \R^{p_n}$ satisfying $\tilde \Theta_n \supset \{ (\bdeta, \btheta): | (\bdeta, \btheta)_j - (\bdeta^*, \btheta^*)_j | \le r_n C_n \alpha_{j,n} \forall j = 1, \ldots, p_n \}$ for a sequence $C_n \to \infty$.

 \subsection{Nonparametric Estimation of Margins}

Parametric estimation of the marginals is often not the primary focus, or one may prefer an estimator of the copula parameters that is robust to misspecification of the margins.
In the semiparametric approach, first proposed by \citet{Genest95}, the marginals $F_l(x_l; \bdeta_l)$ are replaced by the empirical distribution functions $\hat{u}_l = F_{n l}(x_l) = (n+1)^{-1} \sumin \ind (X_{i, l} \le x_l)$.
Define $ \hbu = F_n(\bx) = (F_{n1}(x_1), \ldots, F_{n d}(x_d))$.
In the semiparametric stepwise MLE introduced by \citet{Aas2009}, $\bU_i$ in \eqref{eq:estim1} is replaced by $\hbU_i$, i.e., we define the estimator $\hbtheta_X$ as the solution to 
\begin{equation}
\label{eq:semiPar}
\sumin \phi(\hbU_i; \hbtheta_X) = \bnull
\end{equation}
with $\phi(\bu, \btheta)$ as defined in \eqref{eq:estim1}.
The pseudo-true value $\btheta^*$ remains the solution to $\E[\phi(\bU;  \btheta^*) ] = \bnull$.
\citet{Haff13} showed that, in finite dimensions, this estimator is consistent and asymptotically normal.

We again consider a diverging number of parameters $p_n$.
The notation is the same as in \cref{sec:EstimKnownMAsymp}, i.e., the expectations in the definitions of $I(\btheta)$ and $J(\btheta)$ and in \ref{A:ConvRate_new}--\ref{A:emp_proc} are still taken with respect to $\bU = F(\bX)$.
Denote
  \begin{equation}
\label{eq:def_Y}
\textcolor{red}{h^a(\bx, \tilde \bx) \coloneqq  \underbrace{\nabla_{\bu} \phi(F(\bx); \btheta^*) }_{\in \R^{p_n \times d_n}}
 \begin{pmatrix}
 \ind (\tilde x_1 \le x_1) - F_1(x_1) \\
 \vdots \\
  \ind (\tilde x_{d_n} \le x_{d_n}) - F_{d_n}(x_{d_n})
 \end{pmatrix} ,  \quad h(\bx, \tilde \bx) \coloneqq\frac 1 2 h^a(\bx, \tilde \bx) + \frac 1 2 h^a(\tilde \bx, \bx), \quad  h_1(\bx) \coloneqq \E[h(\bx , \bX) ].}
  \end{equation}
We require some additional assumptions to account for the estimation of $F(\bX)$:

\begin{enumerate}[label=(A\arabic*), resume=assump]
\itemA \label{A:Var_h1} Denote $\sigma_n^2 = \max_{1 \le k \le p_n} \E[h_1(\bX)_k^2 ]$.
It holds that $\sigma_n^2 = O(1)$ and
$
\P \lf( \|h_1(\bX) \|_{\infty} > \sqrt{\frac{\sigma_n^2 n}{4 \ln p_n}}\ri) = o(1/n).
$
\itemA \label{A:AsympSemi_2} It holds that \textcolor{red}{$\max_{1 \le k \le p_n; 1 \le i, i' \le n} \E[ h(\bX_i, \bX_{i'})^2_k ] = O(1)$.} \itemA \label{A:SemiPar_I2} Define
    \begin{align*}
\Gcal_n \coloneqq \Big\{ & G \colon \R^{d_n} \to [0,1]^{d_n}, G(\bx) = \lf(G_1(x_1), \ldots, G_{d_n}(x_{d_n}) \ri) \mid   \text{$G_m, m = 1, \ldots, d_n$ is a continuous distribution function} \\ &\text{and}
  \sup_{x \in \R, 1\le m \le d_n} \frac{ |G_m(x) - F_m(x) |}{w(F_m(x))} \le C d_n^b \sqrt{\ln d_n / n} \quad \text{for all $C < \infty$ and $n$ large enough} \Big \}
\end{align*}
with $w(s) = s^\gamma (1-s)^\gamma$ for some $\gamma \in (0,1)$ and $b > \gamma$.
Denote $\partial_{ml} \phi(\bu; \btheta)_k = \partial^2/(\partial u_m \partial u_l) \ \phi(\bu; \btheta)_k$.
It holds that
\textcolor{red}{
\[
\E \lf[ \sup_{1 \le k \le p_n, G \in \Gcal_n } \left( \sum_{m=1}^{d_n} \sum_{l=1}^{d_n}  |   \partial_{ml} \phi(G(\bX); \btheta^*)_k w(F_m(X_m)) w(F_l(X_l))  |^2 \right)^{1/2} \ri] = O( \sqrt{d_n \ln p_n}).
\]}
\end{enumerate}
\textcolor{red}{The set $\Gcal_n$ of distribution functions characterizes the convergence of the empirical margins to the true ones (see also \cref{lem:emp_cdfs}), the convergence is faster for $x$ with $F(x)$ close to 0 or 1.
Since derivatives of $\phi(\bu; \btheta)$ w.r.t.~$u_m$ tend to explode for $u_m \to 0$ or $u_m \to 1$, we exploit the faster convergence of $F_{nm}$ at the boundaries by multiplying the derivatives $ \partial_{ml} \phi(G(\bX); \btheta^*)_k$ with $w(s) = s^\gamma (1-s)^\gamma, \gamma \in (0,1)$.
Choosing $\gamma$ close to 1 relaxes the assumption on the derivatives, but leads to stronger conditions on $d_n$, as \cref{theorem_semiparam_new} requires $d_n^{(3 + 4b)}(\ln d_n)^2/n =O(1)$ with $b > \gamma$.
}
\begin{enumerate}[label=(A\arabic*), resume=assump]
\itemA \label{A:SemiPar_r} For all $m = 1, \ldots, d_n$, there exists a function $\psi_m \colon (0,1)^{d_n} \to \R^+_0$ such that for all $\bu \in  (0,1)^{d_n}, k  =1, \ldots, p_n$ and $\bDelta \in \R^{p_n}$, it holds that
\[
\lf|  \frac{\partial}{\partial u_m} \lf[\phi(\bu; \btheta^* + \bDelta)_k - \phi(\bu; \btheta^*)_k \ri] \ri| \le \| \bDelta \|_{\infty} \psi_m(\bu).
\]
With $\Gcal_n$ as defined in \ref{A:SemiPar_I2}, it holds that \textcolor{red}{
$
\E\left[\sup_{G \in \Gcal_n} \lf\| \left( w(F_m(X_{m})) \psi_m(G(\bX))\right)_{m=1,\ldots, d_n} \ri\|_2 \right] = O(\sqrt{d_n}) .
$}
\itemA \label{A:alpha_max} Denote $\tilde I_{n,j}$ the set of indices $k$ such that $\partial \phi(\bx; \bu)_j / \partial \theta_k = 0$ for all $\btheta$ and $\bu$ and denote $I_{n,j} = \{ 1, \ldots, p_n \} \setminus \tilde I_{n,j}$. It holds that \textcolor{red}{\[
\max_{1 \le j \le p_n} \frac{\max_{k \in I_{n,j} } \alpha_{k,n}}{ \alpha_{j,n}} = O(1).
\]}
\end{enumerate}
In a vine where each pair copula has a single parameter, we have $I_{n,j} \subset \{ 1, \ldots, j\}$ in \ref{A:alpha_max}.
\begin{theorem}[Consistency with nonparametric estimation of margins]
  \label{theorem_semiparam_new}
  Suppose assumptions \ref{A:ConvRate_new}--\ref{A:alpha_max} hold and \textcolor{red}{$d_n^{(3 + 4b)}(\ln d_n)^2/n =O(1)$, where $b$ is defined in \ref{A:SemiPar_I2}.} Then, with probability tending to 1, the sets $\Theta_n$ contain at least one solution of the estimating equation \eqref{eq:semiPar} that satisfies
          \[
          \max_{1 \le j \le p_n } \frac{|  \hat{\theta}_{X,j} - \theta^*_j|}{\alpha_{j,n}}= O_p\lf( \sqrt{\frac{\ln p_n}{n}}\ri).
          \]
\end{theorem}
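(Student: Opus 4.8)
The plan is to reuse the Poincar\'e--Miranda argument that proves \cref{theorem_cons_new2}, now applied to the perturbed map $\btheta \mapsto \frac1n\sumin \phi(\hbU_i; \btheta)$, and to show that replacing the true pseudo-observations $\bU_i = F(\bX_i)$ by the empirical ones $\hbU_i = F_n(\bX_i)$ only adds a coordinatewise correction of order $O_p(\alpha_{j,n}\sqrt{\ln p_n/n})$, too small to overturn the sign pattern that forces a zero inside the box $\{\btheta : |\theta_j - \theta_j^*| \le r_n C \alpha_{j,n}\}$. Concretely, for $\btheta$ in this box I would split
\[
\frac1n\sumin \phi(\hbU_i;\btheta) = \frac1n\sumin \phi(\bU_i;\btheta) + \frac1n\sumin\big[\phi(\hbU_i;\btheta) - \phi(\bU_i;\btheta)\big],
\]
treat the first sum exactly as in \cref{theorem_cons_new2} (its $j$-th coordinate has the desired sign by \ref{A:curvature} for $C$ large, with fluctuations controlled by \ref{A:ConvRate_new}--\ref{A:emp_proc}), and devote the bulk of the work to the nuisance correction.

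For the correction I would Taylor-expand $\phi$ in its first argument around $\bU_i$, writing $\phi(\hbU_i;\btheta) - \phi(\bU_i;\btheta)$ as the linear term $\nabla_{\bu}\phi(\bU_i;\btheta)(\hbU_i - \bU_i)$ plus a quadratic remainder in the second derivatives $\partial_{ml}\phi$. The point, as encoded in \eqref{eq:def_Y}, is that $\hbU_{il} - U_{il} = F_{nl}(X_{il}) - F_l(X_{il}) = (n+1)^{-1}\sumjn[\ind(X_{jl}\le X_{il}) - F_l(X_{il})] + O(1/n)$, so the linear part, evaluated at $\btheta^*$, equals (up to the $(n+1)$-normalisation and diagonal terms of order $O(1/n)$) the V-statistic $n^{-2}\sum_{i,i'}\bY_{ii'}$ with symmetric, mean-zero kernel $h(\bX_i,\bX_{i'})$ from \eqref{eq:h_def_assump}. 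A Hoeffding decomposition then splits this into the H\'ajek projection $\frac1n\sumin h_1(\bX_i)$ (up to a constant factor) and a degenerate remainder; since $\E[h_1(\bX)] = \bnull$, assumption \ref{A:Var_h1} yields $\|\frac1n\sumin h_1(\bX_i)\|_\infty = O_p(\sqrt{\ln p_n/n})$ by the same maximal-inequality argument that \ref{A:phi-moments} supplies for the known-margins term, while \ref{A:AsympSemi_2} bounds the second moments of $h$ and, via a union bound over the $p_n$ coordinates, shows the degenerate part is $o_p(\sqrt{\ln p_n/n})$.

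It then remains to dispose of the two error sources that are not yet at $\btheta^*$ and not yet linear. The gap between $\nabla_{\bu}\phi(\bU_i;\btheta)$ and $\nabla_{\bu}\phi(\bU_i;\btheta^*)$ over the box is controlled by \ref{A:SemiPar_r}: the envelope bound $|\partial_{u_l}[\phi(\bu;\btheta)_k - \phi(\bu;\btheta^*)_k]| \le \|\btheta-\btheta^*\|_\infty\psi_l(\bu)$ together with $\E[\sup_{G\in\Gcal_n}\|\bpsi(G(\bX))\|_2] = O(\sqrt{d_n})$ and $\E\|\hbU_i - \bU_i\|_2^2 = O(d_n/n)$ turns this into a term of order $\|\btheta-\btheta^*\|_\infty\sqrt{d_n}\cdot\sqrt{d_n/n}$, negligible under $d_n^3/n = O(1)$. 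The quadratic Taylor remainder is handled by \ref{A:SemiPar_I2}: with probability tending to one $F_n$ lies in the weighted neighbourhood $\Gcal_n$ (a weighted oscillation bound for the empirical process), so $\sum_{m,l}|\partial_{ml}\phi(\hbU_i;\btheta^*)_k|^2 = O(d_n\ln p_n)$ in expectation, and Cauchy--Schwarz against $\|\hbU_i - \bU_i\|^2 = O_p(d_n/n)$ again gives $o_p(\sqrt{\ln p_n/n})$ precisely when $d_n^3/n = O(1)$. Finally, \ref{A:alpha_max} ensures that dividing the $j$-th coordinate of the assembled correction by $\alpha_{j,n}$ keeps it uniformly $O_p(\sqrt{\ln p_n/n})$, since the relevant entries of $\nabla_{\bu}\phi(\cdot)_j$ carry only weights $\alpha_{k,n}$ with $k\le j$.

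Putting the pieces together, the correction cannot flip the sign of the $j$-th coordinate on the face $|\Delta_j| = \alpha_{j,n}$ once $C$ is large, so the Poincar\'e--Miranda theorem delivers a zero $\hbtheta_X$ of \eqref{eq:semiPar} inside the box, which is exactly the asserted rate. I expect the main obstacle to be the uniform-in-coordinate control of the U-statistic correction combined with the weighted empirical-process bounds: keeping the H\'ajek term at $O_p(\sqrt{\ln p_n/n})$ while pushing the degenerate and quadratic remainders strictly below it across all $p_n$ growing coordinates is exactly where the dimension constraint $d_n^3/n = O(1)$ and the delicate weighting in \ref{A:SemiPar_I2}--\ref{A:alpha_max} become essential.
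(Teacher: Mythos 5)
Your proposal is correct and follows essentially the same route as the paper: the Poincar\'e--Miranda argument applied to $\P_n\phi_X$, with the margin-induced correction split into a linear term at $\btheta^*$ (treated as a U/V-statistic with the kernel of \eqref{eq:h_def_assump} via Hoeffding's decomposition, using \ref{A:Var_h1}--\ref{A:AsympSemi_2}), a quadratic remainder controlled by \ref{A:SemiPar_I2} and the weighted empirical-process neighbourhood $\Gcal_n$, and the $\btheta$-dependence of $\nabla_{\bu}\phi$ over the box handled by \ref{A:SemiPar_r}--\ref{A:alpha_max} --- exactly the content of the paper's Lemmas \ref{lem:margins1} and \ref{lem:margins_cons_remainder}. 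The only cosmetic difference is the order of the splitting (you peel off $\phi_U(\btheta)$ first, the paper peels off $\phi_X(\btheta^*)$ first) and your mention of a union bound for the degenerate Hoeffding remainder where the paper uses an $\ell_2$ second-moment bound; the resulting four terms and the assumptions invoked for each are the same.
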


\begin{enumerate}[label=(A\arabic*), resume=assump]
    \itemA \label{A:SemiPar2} With $\Gcal_n$ and $\partial_{ml} \phi (\bu; \btheta)_k$ as defined in \ref{A:SemiPar_I2}, holds that \textcolor{red}{
    \[
    \E \lf[ \sup_{G \in \Gcal_n } \left( \sum_{m=1}^{d_n} \sum_{l=1}^{d_n} \sum_{k=1}^{p_n}  | \partial_{ml} \phi(G(\bX); \btheta^*)_k w(F_m(X_m)) w(F_l(X_l))  |^2 \right)^{1/2} \ri] = O(\sqrt{p_n}).
    \]}
  \itemA  \label{A:Asymp2_margins} With $h_1(\bx)$ as defined in \eqref{eq:def_Y}, it holds that $\displaystyle \E \left[ \| A_n h_1(\bX)\|^4 \right] =o(n).$
\end{enumerate}

\noindent Let $C$ be the true copula distribution of $\bX$, i.e., the distribution of $\bU = F(\bX)$.

\begin{theorem}[Asymptotic normality with nonparametric estimation of margins]
  \label{theorem2_semiparam_new}
  Suppose assumptions \ref{A:ConvRate_new}--\ref{A:Asymp2_margins} hold with some matrix $A_n \in \R^{q \times p_n}$ with $\| A_n \| = O(1)$ and for which $\Sigma_X = \lim_{n \to \infty} A_n \widetilde \Sigma_n A_n^\top$ exists, where
   \[
   \widetilde \Sigma_n = \cov \lf( \phi(\bxi; \btheta^*) +  \int \sum_{l=1}^{d_n}  \frac{\partial}{\partial u_l} \phi(\bu ; \btheta^*) ( \ind ( \xi_l \le u_l) - u_l) d C(\bu) \ri),
   \]
    and $\bxi$ is a random variable with distribution $C$. 
    Suppose that \textcolor{red}{$\alpha_n^2 p_n d_n^{(2 + 2b)} \ln p_n \ln d_n /n \to 0$ and $d_n^{(2 + 4b)} p_n(\ln d_n)^2 /n \to 0$.}
    Then, $\hbtheta_X$ in \cref{theorem_semiparam_new} satisfies
  \[
    \sqrt{n} A_n  J(\btheta^*)(\hbtheta_X - \btheta^*) \to_d \Ncal(\0, \Sigma_X).
  \]
\end{theorem}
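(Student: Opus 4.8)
The plan is to reduce the semiparametric problem to the known-margins case by showing that replacing $\bU_i$ with the pseudo-observations $\hbU_i$ introduces an asymptotically negligible correction plus a linear term whose variance is captured by $\widetilde\Sigma_n$. The starting point is the estimating equation $\sumin \phi(\hbU_i; \hbtheta_X) = \bnull$. Expanding $\phi$ around $\btheta^*$ in the parameter argument and simultaneously accounting for the perturbation $\hbU_i - \bU_i = F_n(\bX_i) - F(\bX_i)$ in the data argument, I would write
\begin{align*}
\bnull = \frwn \sumin \phi(\hbU_i; \hbtheta_X)
= \frwn \sumin \phi(\bU_i; \btheta^*) + \frwn \sumin [\phi(\hbU_i;\btheta^*) - \phi(\bU_i;\btheta^*)] + \sqrt{n}\, \bar J_n (\hbtheta_X - \btheta^*) + R_n,
\end{align*}
where $\bar J_n$ is a mean-value version of $J(\btheta^*)$ and $R_n$ collects higher-order remainders. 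Multiplying through by $A_n$ and using that \cref{theorem_semiparam_new} already gives the rate $\max_j |\hat\theta_{X,j}-\theta^*_j|/\alpha_{j,n} = O_p(\sqrt{\ln p_n/n})$, the plan is to show $A_n J(\btheta^*)\sqrt{n}(\hbtheta_X - \btheta^*)$ equals minus the sum of the first two terms plus $o_p(1)$.

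The central object is the marginal-estimation term $\frwn \sumin [\phi(\hbU_i;\btheta^*) - \phi(\bU_i;\btheta^*)]$. I would perform a functional Taylor expansion in the empirical marginals: to first order this is $\frwn \sumin \sum_{l=1}^{d_n} \frac{\partial}{\partial u_l}\phi(\bU_i;\btheta^*)(\hat u_{il} - u_{il})$, and since $\hat u_{il} - u_{il} = \frn \sum_{i'} (\ind(X_{i'l}\le X_{il}) - F_l(X_{il})) + O(1/n)$, this first-order term is exactly a $V$-statistic built from the kernel $\bY_{ii'}$ in \eqref{eq:def_Y}. Symmetrizing gives the $U$-statistic with kernel $h(\bX_i,\bX_{i'})$ from \eqref{eq:h_def_assump}, and Hájek projection replaces it by $\frwn \sumin 2\, h_1(\bX_i)$ up to a degenerate remainder. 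The second-order term is controlled by \ref{A:SemiPar2}, which bounds $\sum_{m,l,k}|\partial_{ml}\phi(G(\bX);\btheta^*)_k|^2$ uniformly over the class $\Gcal_n$; combined with the uniform rate $\sup_{x,m}|F_{nm}(x)-F_m(x)|/w(F_m(x)) = O_p(d_n^b\sqrt{\ln d_n/n})$ for pseudo-observations (placing $F_n$ in $\Gcal_n$ with high probability), this makes the quadratic remainder $o_p(1)$ after multiplication by $A_n$ under the stated rate $\alpha_n^2 p_n d_n^2 \ln d_n/n \to 0$. Assumption \ref{A:SemiPar_r} provides the Lipschitz control needed to absorb the interaction between the $\btheta$-perturbation and the margin-perturbation into the remainder, and \ref{A:Asymp} handles the smoothness of $\phi$ in $\btheta$ as in \cref{theorem2}. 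The upshot is
\[
A_n \frwn \sumin [\phi(\hbU_i;\btheta^*) - \phi(\bU_i;\btheta^*)] = A_n \frwn \sumin 2\, h_1(\bX_i) + o_p(1),
\]
so that the total linear term becomes $A_n \frwn \sumin \big[\phi(\bU_i;\btheta^*) + 2 h_1(\bX_i)\big]$, and since $2 h_1(\bx) = \int \sum_l \frac{\partial}{\partial u_l}\phi(\bu;\btheta^*)(\ind(\xi_l\le u_l)-u_l)\,dC(\bu)$ evaluated through $\bX = F^{-1}(\bxi)$, its covariance is precisely $\widetilde\Sigma_n$.

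The final step is the central limit theorem for $A_n \frwn \sumin [\phi(\bU_i;\btheta^*) + 2 h_1(\bX_i)]$. These are iid mean-zero vectors with covariance $A_n \widetilde\Sigma_n A_n^\top \to \Sigma_X$, and the fourth-moment Lyapunov condition follows from \ref{A:Asymp2} for the $\phi$ part and \ref{A:Asymp2_margins} for the $h_1$ part, both giving $o(n)$; a Cramér–Wold reduction to the one-dimensional Lindeberg–Feller CLT then yields asymptotic normality. I expect the main obstacle to be the uniform negligibility of the second-order margin-expansion term: one must verify that $F_n$ lies in $\Gcal_n$ with probability $1-o(1)$ (a weighted uniform bound on the empirical process near the boundary, where the weight $w(s)=s^\gamma(1-s)^\gamma$ tames the tails), and then control the quadratic form $A_n \frwn \sumin \partial_{ml}\phi(\tilde G(\bX_i);\btheta^*)(\hat u_{im}-u_{im})(\hat u_{il}-u_{il})$ using \ref{A:SemiPar2} together with $\|A_n\|=O(1)$ and the $d_n^3/n=O(1)$ assumption; balancing these rates against $\alpha_n^2 p_n d_n^2 \ln d_n/n \to 0$ is the delicate part, and it is precisely where the separate bookkeeping of $d_n$ versus $p_n$ matters.
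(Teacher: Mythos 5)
Your proposal follows essentially the same route as the paper's proof: the same decomposition isolating $\P_n\phi_U(\btheta^*)$, the linearization of the margin correction as a $U$-statistic with kernel built from $\bY_{ii'}$ followed by Hoeffding/H\'ajek projection onto $2h_1$, control of the second-order margin term via \ref{A:SemiPar2} and the weighted empirical-process bound, absorption of the $\btheta$--margin interaction via \ref{A:SemiPar_r}, and a Lindeberg--Feller CLT using \ref{A:Asymp2} and \ref{A:Asymp2_margins}. The identification of $2h_1$ with the integral term in $\widetilde\Sigma_n$ is also correct, so no substantive gap remains.
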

For finite dimensions, choosing $A_n = J^{-1}(\btheta^*)$, \cref{theorem2_semiparam_new} recovers the results from \citet{Tsuka} and \citet{Haff13}.
Compared to the estimation with known margins in \cref{theorem2}, an additional term appears in the covariance matrix stemming from the increased uncertainty due to the estimation of margins.
\textcolor{red}{We furthermore require additional assumptions on the sensitivity of $\phi(\bu; \btheta)$ with respect to $\bu$ to ensure convergence of expressions involving $\phi(\widehat{\bU}_i ; \btheta)$ to their population counterparts $\phi(\bU_i ; \btheta)$.
This is mainly achieved by bounding sums over first or second partial derivatives of $\phi(\bu; \btheta^*)$ with respect to $\bu$, either implicitly (\ref{A:Var_h1}, \ref{A:AsympSemi_2}, \ref{A:SemiPar_r}) or explicitly (\ref{A:SemiPar_I2}, \ref{A:SemiPar2}).
As in \citet{Tsuka} and \citet{Haff13}, the derivatives are attenuated by the `U-shaped' weight function $w(s) = s^\gamma (1-s)^\gamma$ to exploit that empirical distributions have smaller errors near the boundaries, which counteracts potentially exploding derivatives.
For the independence case of a Gaussian vine, we have $U_{a_e | D_e} = U_{a_e}$ for each edge $e = (a_e, b_e ; D_e)$ and the estimation of the corresponding pair copula parameter $\theta_e$ only depends on $U_{a_e}$ and $U_{b_e}$, leading to at most four non-zero entries for each $k$ in the sums in \ref{A:SemiPar_I2} and \ref{A:SemiPar2} ($m = l = a; m = l = b; m = a, l = b$ and vice versa).
In a vine whose pair copulas in later trees are sufficiently close to independence (which ensures identifiability; see the discussion of \ref{A:curvature} in \cref{sec:EstimKnownMAsymp}), it is therefore reasonable to expect that sums of partial derivatives grow sufficiently slowly.
Furthermore, while the set of variables $U_j$ that affect the estimation of a parameter in the $t$-th tree increases with $t$ (since a pair copula in the $t$-th tree is affected by $t+1$ variables), the influence of the original $U_j$ on $U_{a_e | D_e}$ typically diminishes for most $j$ as $t$ grows.
The discussed conditions can be relaxed at the cost of stricter assumptions on $d_n$.
See the supplement for additional details on the derivatives $\partial \phi(\bu; \btheta)_k / \partial u_j$.}

\section{Simulation Study}\label{sec:Sim}

To evaluate the performance of the stepwise maximum likelihood estimator, we compare the estimates $\hbtheta_U$ and $\hbtheta_X$ to the true values $\btheta^*$ for several (simplified) vine copula models.
The structure is either a D- or a C-vine.
All bivariate copulas within a given vine are chosen from the same family, either Gaussian, Gumbel, or Student's t.
Gaussian vines provide a well-behaved benchmark with light tails, while Gumbel vines exhibit heavy tails and asymmetric dependence.
With Student's t, we include a copula family with a second parameter, in which $\rho = 0$ does not correspond to independence.
\textcolor{red}{As a second model with two parameters per pair copula, we include a 50--50 mixture of two rotated Gumbel copulas.}

For Gumbel, we chose the following parameterization: 
For $\theta \ge 0$, we use the Gumbel copula with parameter $\theta + 1$.
For $\theta < 0$, we use the 90-degree rotated Gumbel copula with parameter $- \theta + 1$.
This ensures that $\theta = 0$ (independence copula) does not lie on the boundary of the parameter space.
\textcolor{red}{For the mixture of two Gumbel pair copulas, we define the first component as $\text{Gumbel}(\theta_1 + 1)$ for $\theta_1 \ge 0$ and $\text{Gumbel}_{90}(-\theta_1+1)$ for $\theta_1 <0$, and the second component as $\text{Gumbel}_{180}(\theta_2 + 1)$ for $\theta_2 \ge 0$ and $\text{Gumbel}_{270}(-\theta_2+1)$ for $\theta_2 <0$. For simplicity, we take $\theta_1^* = \theta_2^*$.}
For the Student's t vines, the degrees of freedom parameter $\nu$ is set to $\nu^* = 4$ for all pair copulas.
Let $\theta^*_t$ denote the true parameter in the $t$-th tree.

The dimension $d$ ranges from 10 to 200, and the sample size $n$ ranges from 100 to 5000. 
For the true parameters, we choose the functions $\theta_t^* = 0.5^t$, $\theta_t^* = 1/(t+1)$ and $\theta_t^* = 0.5 / \sqrt{t+1}$.
For each configuration, $N = 100$ data sets ($N = 50$ for Student's t) of size $n$ are generated by sampling copula data $\bU_i$ or the untransformed data $\bX_i, i = 1, \ldots, n$.
We estimate $\hbtheta_U$ and $\hbtheta_X$ via stepwise ML and compute the normalized maximum norm of the error $\sqrt{n/\log(d_n)} \| \hbtheta - \btheta^* \|_{\infty}$, which should be bounded in probability if \cref{theorem_cons_new2} and \cref{theorem_semiparam_new} hold, as well as the normalized sum of errors $\sqrt{n}/d_n \sum_{k= 1}^{p}( \htheta_k - \theta_k^*)$.
If the assumptions of \cref{theorem2} and \cref{theorem2_semiparam_new} are satisfied and $n$ is sufficiently large, this sum should be approximately normally distributed with mean zero and variance independent of both $d$ and $n$.

When assessing consistency, the following question arises: Since the theoretical results concern the regime in which both $d$ and $n$ diverge, which pairs $(n,d)$ should be compared to each other with respect to $\sqrt{n/\log(d_n)} \| \hbtheta - \btheta^* \|_{\infty}$?
Our approach is as follows: 
We fix a set of values for $d$ and three different growth regimes for $d$ relative to $n$, namely $d \sim n$, $d \sim n^2$ and $d \sim n^3$.\footnote{The explicit functions are $n = 25 d$, $n = 0.125 d^2$ and $n = 0.003 (d-50)^3$ (for $d > 50$) for Gaussian and Gumbel vines, and $n = 0.005 d^3$ for Student's t.}
For each $d$ and each regime, we compute the corresponding sample size $n^{\text{new}}$.
Since we usually do not have $d$-dimensional estimates $\hbtheta^{(d)}_{n^{\text{new}}}$ based on samples of size $n^{\text{new}}$, we estimate the mean of the error $\| \hbtheta^{(d)}_{n^{\text{new}}} - \btheta^{*,(d)} \|_{\infty}$ using linear interpolation or, when extrapolation is required, a linear model fitted to the estimates using the two $n$ values that are closest to $n^{\text{new}}$.
Both interpolation and extrapolation are performed on the log scale, as for fixed $d$, we expect $ \| \hbtheta_n - \btheta^* \|_{\infty} \approx c \, n^a$, i.e., $\log  \| \hbtheta_n - \btheta^* \|_{\infty} = \log c + a \log n$.
In \cref{fig:par_gauss_gumbel_norm}, we show the estimated normalized errors for the three growth regimes. 
A stabilization of the values indicates that, for the given relation between $n$ and $d$, the parameter estimates are consistent in $\| \cdot\|_{\infty}$ norm.
 Note that the results for $n \sim d^3$ are only computed for $d \ge 75$ (for Gaussian and Gumbel vines): 
To avoid extreme extrapolation for $d = 200$, the constant in $n \sim d^3$ is so small that for small $d$, we obtain $n^{\text{new}}$ that are too small to yield reasonable estimates.

\begin{figure}
  \centering
  \includegraphics[width = \textwidth]{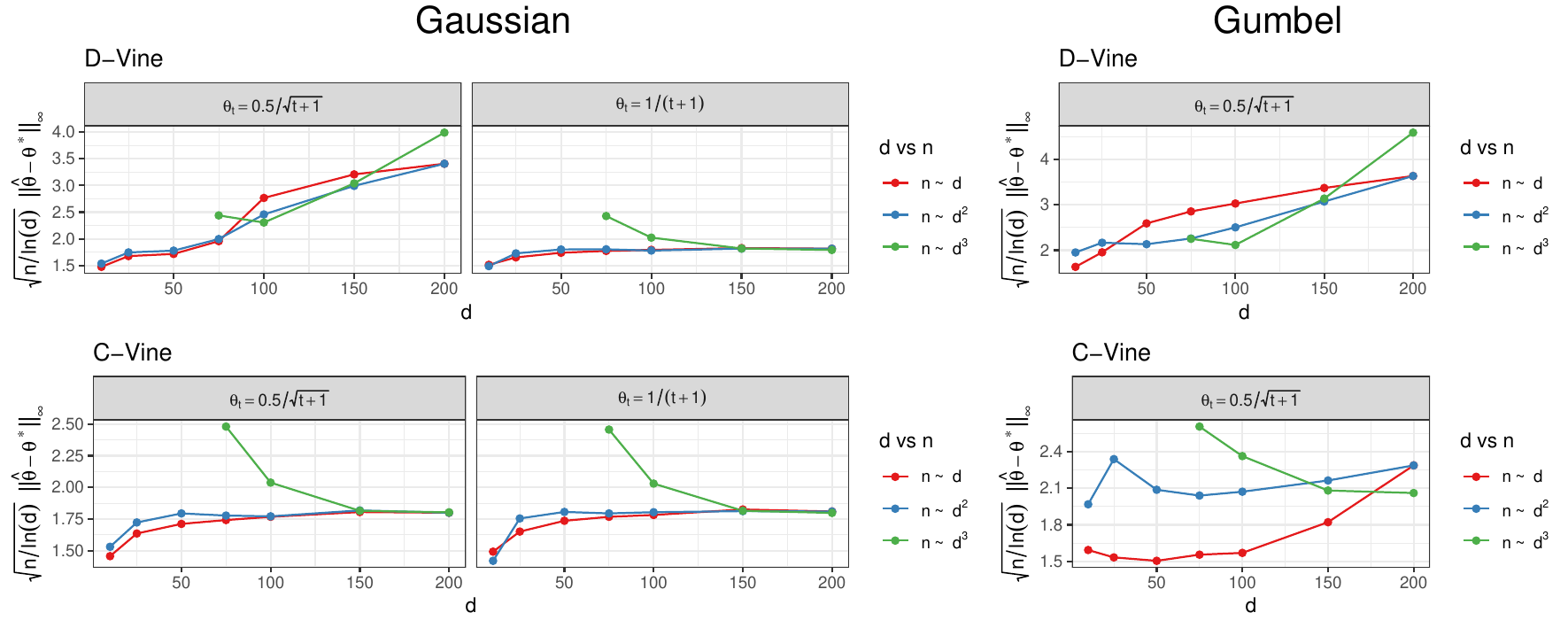}
  \caption{Parameter estimation for Gaussian and Gumbel vines, mean maximum norm of estimation error for different proportions of $d$ and $n$. Parameterization: $\theta = \rho$ for Gaussian and $\text{Gumbel}(\theta + 1)$ for $\theta \ge 0$, $\text{Gumbel}_{90}(-\theta+1)$ for $\theta <0$. }
  \label{fig:par_gauss_gumbel_norm}
\end{figure}

We first omit the estimation of margins, i.e., the true copula data $\bU_i = F(\bX_i)$ are used.
 
\begin{figure}
  \centering
  \includegraphics[width = 0.9\textwidth]{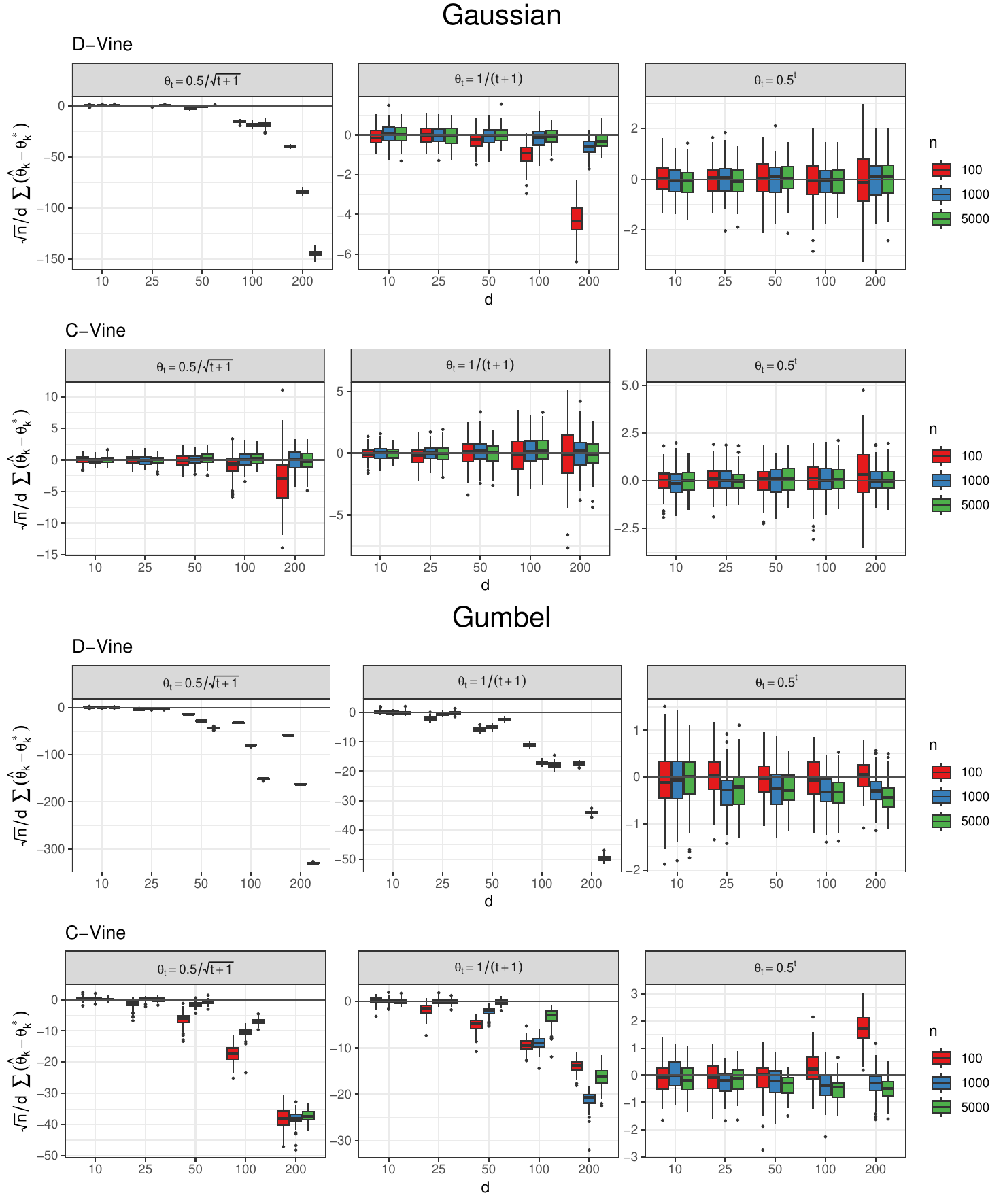}
  \caption{Parameter estimation for Gaussian and Gumbel vines, sum of estimation errors. Parameterization: $\theta = \rho$ for Gaussian and $\text{Gumbel}(\theta + 1)$ for $\theta \ge 0$, $\text{Gumbel}_{90}(-\theta+1)$ for $\theta <0$. Each boxplot represents $N=100$ replications.}
  \label{fig:par_gauss_gumbel}
\end{figure}

\myparagraph{Gaussian and Gumbel Vines}

 \cref{fig:par_gauss_gumbel_norm} shows the error norm for Gaussian and Gumbel vines for a selection of the models. 
 The results for $\theta^*_t = 1/(t+1)$ (Gumbel) and $\theta^*_t = 0.5^t$ (Gumbel and Gaussian) are similar to the Gaussian vines with $\theta^*_t = 1/(t+1)$ and therefore only shown in the supplement.
 
 For Gaussian vines, $\hbtheta_U$ appears to be consistent for all models even when $n \sim d$, except for the D-vine with $\theta_t = 0.5/\sqrt{t+1}$, which is in line with the numerical experiments on the assumptions in the supplement.
 Estimation in Gumbel vines mostly exhibits similar behavior, except for the C-vine with $\theta_t= 0.5/\sqrt{t+1}$, which seems to require $n \sim d^2$ or even $n \sim d^3$ for consistency. 
 
 Since \cref{theorem_cons_new2} also allows for convergence rates slower than $\sqrt{\log(d_n)/n}$ (in maximum norm), we computed $\| \hbtheta_U - \btheta^*\|_\infty$ with different normalizations for the Gaussian D-vine with $\theta_t = 0.5/\sqrt{t+1}$.
 The results are shown in the supplement.
 They suggest that, while the numerical experiments in the supplement ($\alpha(t) = t$, i.e., $\alpha_n = d_n$) imply the rate $\sqrt{d_n^2 \log(d_n)/n}$, already with rate $\sqrt{d_n \log(d_n)/n}$, the mean maximum norm is bounded.

The normalized sum of errors is shown in \cref{fig:par_gauss_gumbel} to assess the asymptotic normality of $\hbtheta_U$.
Recall from \cref{sec:EstimKnownMAsymp} that this requires stronger assumptions on $\phi$ and $d_n$ than consistency.
For Gaussian vines, $\hbtheta_U$ is asymptotically normal with mean zero in all models except D-vine, $\theta_t = 0.5/\sqrt{t+1}$.
For Gumbel vines however, $\hbtheta_U$ is heavily biased for both C- and D-vines with $\theta_t = 0.5/\sqrt{t+1}$ and $\theta_t= 1/(t+1)$, with larger biases for D-vines.
Even for $\theta_t = 0.5^t$, the results indicate a negative bias.
While $\hbtheta_U$ is a consistent estimator, a closer analysis shows that, while the magnitudes of positive and negative biases are approximately the same, negative biases (i.e., $\hat{\theta}_k < \theta_k^*$) occur far more often than positive ones.

\begin{figure}
  \centering
  \includegraphics[width = 0.85\textwidth]{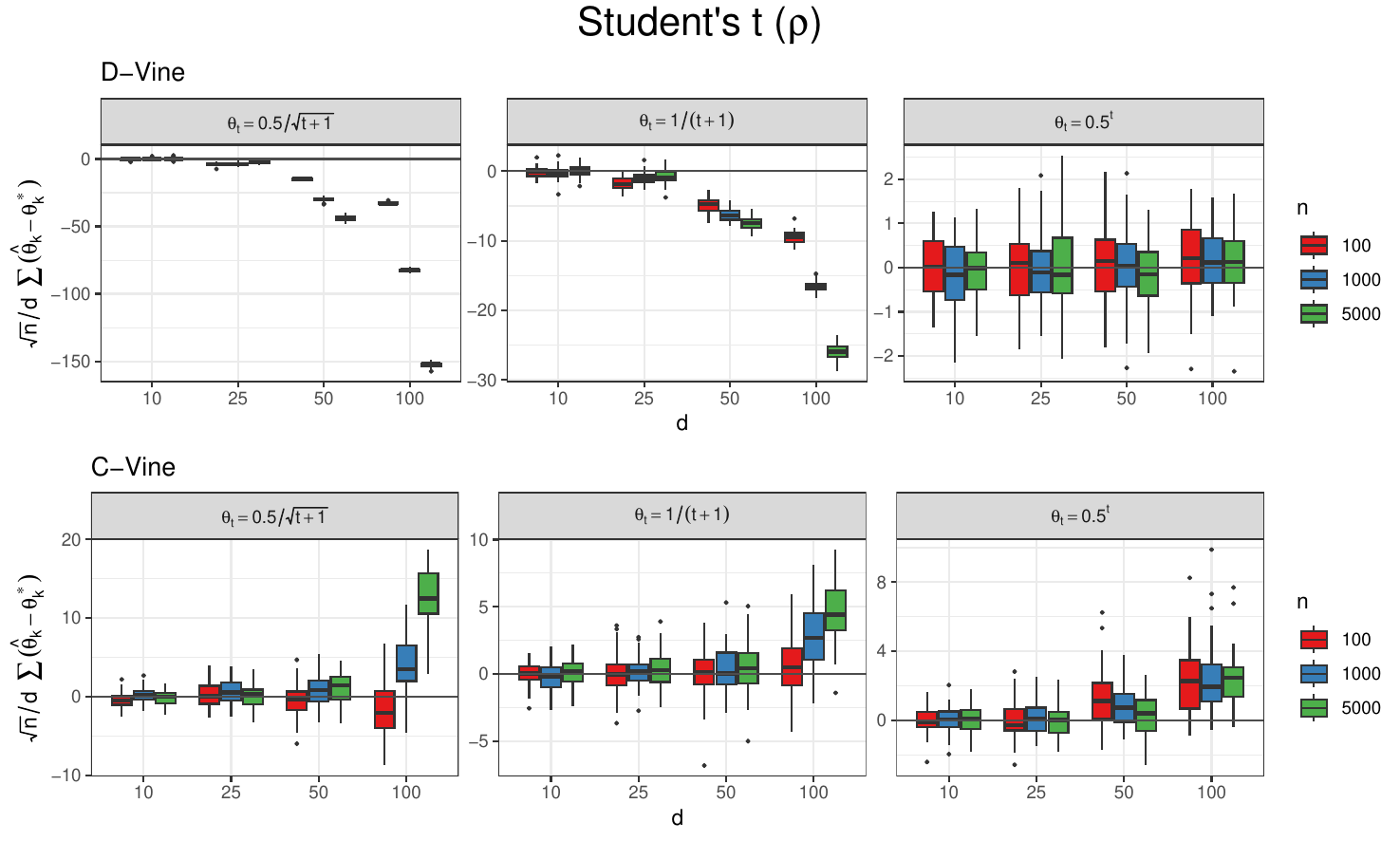}
  \caption{Parameter estimation for Student's t vines, sum of estimation errors. }
  \label{fig:par_student}
\end{figure}

\myparagraph{Student's t Vines}

We now turn to the estimation of Student's t vines. The results on the assessment of consistency are shown in the supplement. They indicate that consistent estimation of $\brho$ is possible in certain settings (D-vine, $\rho_t = 1/(t+1)$ and $\rho_t = 0.5^t$), whereas the degrees of freedom parameter $\bnu$ cannot be estimated consistently.
\cref{fig:par_student} shows that $\bm{\hat{\rho}}$ is biased in almost all settings, with even larger biases for $\bm{\hat{\nu}}$ (see the supplement).
Unlike for Gumbel and Gaussian vines, the bivariate copulas do not converge to independence copulas in this setting, which probably causes the observed biases.
Nevertheless, the upper right panel (D-vine, $\rho_t = 0.5^t$) suggests that there are settings in which the estimation of the correlation parameter $\rho$ can still be asymptotically normal with mean zero.

\myparagraph{Mixture of Gumbel pair copulas}
\textcolor{red}{
  In contrast to the Student’s t vines, the two-parametric pair copulas in the simulation from a mixture of Gumbel copulas converge to independence, as both $\theta_1^* \to 0, \theta_2^* \to 0$ as $d_n \to \infty$.
  The results are shown in the supplement.
  Consistent estimation of $(\theta_1^*, \theta_2^*)$ with $d \sim n$ is possible in all investigated settings.
  The sum of errors indicates that, similar to the results for the Gumbel vine, the estimation is biased except for $\theta^*_t = 0.5^t$.
}

\myparagraph{Number of Parameters and Truncated Vines}

In many of the investigated models, we encounter settings in which the number of parameters $p$ exceeds the sample size $n$, yet consistent estimation of $\btheta^*$ remains possible.
This is in line with \cref{theorem_cons_new2}, which does not explicitly require $p_n / n \to 0$.
Instead, restrictions on the rate of growth of $p_n$ are implicitly imposed, mainly by \ref{A:emp_proc}.
While the numerical experiments in the supplement indicate that $p_n^2/n \to 0$ is sufficient for \ref{A:emp_proc}, the simulation study suggests that this restriction can be relaxed in practice.

Another interesting application of the theoretical results arises in truncated vines with copula dimension $d_n \to \infty$ and a fixed truncation level.
In this setting, \cref{theorem_cons_new2} only requires $\ln d_n / n \to 0$ under standard conditions.
Simulations from a Gaussian C-vine with $\theta_t = 1/(t+1)$ ($t=1,2$) that is truncated after the second tree indicate that even for $d = 2000$, estimation is unbiased for $n \ge 100$ (see the supplement for an additional figure).

\myparagraph{Nonparametric Estimation of Margins}
\label{sec:ResParEstimNonPar}

We now compare $\hbtheta_U$, the estimator based on known margins, to $\hbtheta_X$, where the copula data is estimated from the empirical distribution functions $F_n(\bX_i)$, which induces additional uncertainty.
All univariate margins are standard normal, and all remaining parameters are the same as above.

\begin{figure}
  \centering
  \includegraphics[width = 0.95\textwidth]{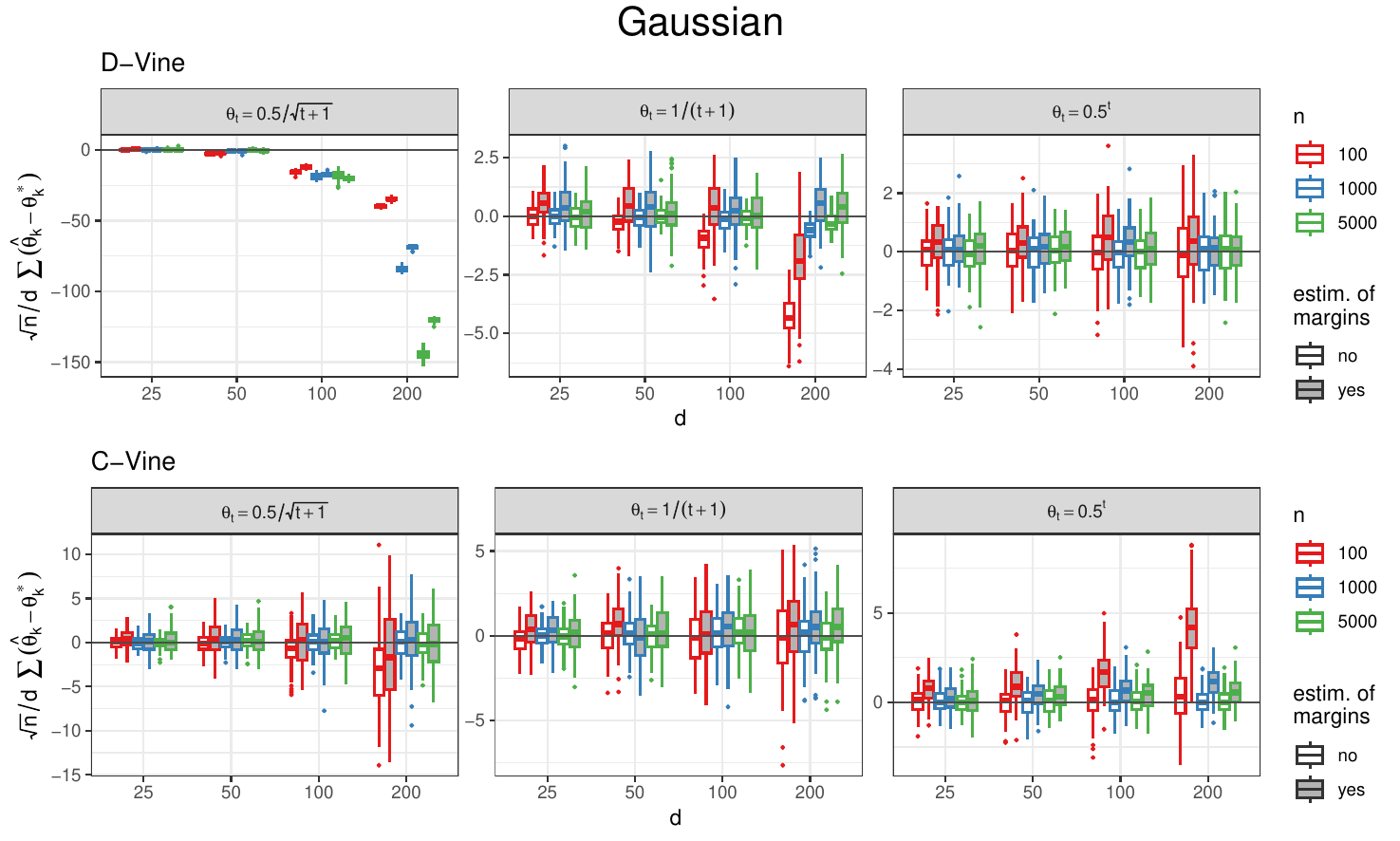}
  \caption{Parameter estimation for Gaussian vines with nonparametric estimation of margins, sum of estimation errors. }
  \label{fig:par_gauss_margins}
\end{figure}

\cref{fig:par_gauss_margins} shows the sum of errors for Gaussian vines for both $\hbtheta_U$ and $\hbtheta_X$.
For D-vines, we only observe a small increase in variance, whereas the estimation based on nonparametric margins in high-dimensional C-vines is slightly biased even for large $n$.
The corresponding results for Gumbel and Student's t vines exhibit similar patterns and are therefore only shown in the supplement.

\section{Discussion}
\label{sec:conclusion}
In this work, we provide asymptotic results for stepwise parameter estimation in high-dimensional vine copulas.
The theory covers both parametric and nonparametric estimation of margins as well as truncated vines, and is not restricted to specific R-vine structures or parametric families.
Moreover, it is applicable to any parametric estimation problem whose solution can be expressed as the root of a system of estimating equations.

\textcolor{red}{Some of the presented assumptions are rather abstract and difficult to verify analytically for generic vines involving a mix of popular copula families. However, most assumptions  are trivial or straightforward to establish for finite-dimensional or truncated vines (see \cref{prop:trunc}), whereas  analytical verification for sufficiently smooth pair-copula models is feasible, but tedious.
At the same time, the conditions show that the estimation of parameters of a vine with diverging dimension and too strong dependence becomes challenging due to the propagation of estimation errors from earlier trees.
Using an independence vine with $d_n \to \infty$ as a starting point, we can allow for a certain amount of dependence, potentially at the cost of stricter assumptions on $d_n$ or a slower rate of convergence.}

A numerical validation of the assumptions suggests that the derived conditions hold if the pair copulas in higher trees converge sufficiently fast to independence copulas and $p_n = o(\sqrt{n})$, which may be too restrictive for many practical applications.
The simulation study indicates that, in practice, inference on vine copula parameters is often feasible even when the dimension and therefore the number of parameters is relatively high compared to the sample size, e.g., $p_n \sim n$.

In applications, estimation of vine copula parameters often comes with the challenge of selecting an appropriate vine tree structure and pair copula families.
Choosing between different copula families is typically done by minimizing AIC or BIC, with a modified version tailored for high-dimensional vines developed by \citet{Nagler2019}.
As the number of possible vine structures grows superexponentially \citep{MoralesN}, maximizing such criteria to also determine the vine structure is infeasible.
The popular algorithm by \citet{Dissmann2013} greedily builds the tree structure by fitting the strongest dependencies first, typically resulting in (almost) independence in higher trees.
The validity of this approach is supported by our derived conditions for consistent estimation, which require decreasing dependence in higher trees.

While we only cover parametric estimation of vine copulas, our results are also applicable to nonparametric approaches such as B-splines \citep{Kauermann2014} or Bernstein copulas (\citet{Scheffer17}, see \citet{Nagler17} for an overview and comparison of nonparametric methods).
In such cases, it is natural to increase the number of basis functions as $n$ increases.
Bias-variance tradeoff considerations typically yield rules of the form $n^\delta$ parameters per pair copula for some small $\delta$.
If $M_n$ in \ref{A:emp_proc} is sufficiently small, our theory ensures consistent estimation of $O(d^2 n^\delta)$ parameters.
The smaller $M_n$, the more we can relax the assumptions on $d_n$.
Another use case with $p \to \infty$ as $n \to \infty$, even when $d = O(1)$, arises in non-simplified vines.
Here, our results can be extended to approaches such as the nonparametric method by \citet{Schellhase18}, as well as models in which the parameters $\theta_{a_e, b_e ; D_e}(\bu_{D_e})$ are estimated using GLMs \citep{Han17} or splines \citep{Vatter2018}.
Working out the assumptions in detail in these special cases is beyond the scope of this paper.

Many existing methods for high-dimensional vines aim to induce sparsity, such as truncated or thresholded vines (see \cref{sec:intro}), which could also be achieved by adding a suitable penalty term to the estimating function.
To the best of our knowledge, penalized estimation of parametric vines has not yet been studied, and extending the results on high-dimensional penalized estimation in \citet{Gauss24} remains an open problem for future research.

\vspace{1cm}

\paragraph{Funding sources}
This research did not receive any specific grant from funding agencies in the public, commercial, or not-for-profit sectors.

\paragraph{Supplement}
The supplement contains a numerical validation of the assumptions\textcolor{red}{, a theoretical discussion of the identifiability assumption \ref{A:curvature}, an analytical verification of the assumptions for FGM vines}, additional figures for the simulation study as well as a proposition on $J(\btheta^*)$ for Gaussian C-vines.

\paragraph{Code availability} 
The code and data to reproduce the results can be found on GitHub:
\url{https://github.com/JanaGauss/high\_dimensional\_vines}

 \bibliographystyle{elsarticle-harv} 
\bibliography{bibliography_JMVA}

 \appendix

\section{Proofs of the Main Results}

\label{sec:proofs} 

To simplify notation, we use the following abbreviations: $\P_n f = \frac 1 n \sumin f(\bX_i)$, $P f = \E[f(\bX)]$ and
\[
\P_n \phi_X (\btheta) = \frac 1 n \sumin \phi(F_n(\bX_i); \btheta) \quad \text{and} \quad \P_n \phi_U (\btheta) = \frac 1 n \sumin \phi(F(\bX_i); \btheta),
\]
where
\[
F(\bX_i) = (F_1(X_{i,1}), \ldots, F_{d_n}(X_{i,d_n})), \quad 
F_n(\bX_i) = (F_{n1}(X_{i,1}), \ldots, F_{n d_n}(X_{i,d_n})) 
\]
and $ F_{n k}(x) = (n+1)^{-1} \sumin \ind (X_{i, k} \le x)$.

$\| \cdot \|$ denotes the Euclidean norm for vectors and the spectral norm $\| A \| = \sup_{\| \bx \| = 1} \| A \bx \|$ for matrices.
Define $r_n \coloneqq\sqrt{\ln p_n / n}$.

\subsection{Proof of \autoref{theorem_cons_new2}}

Denote $\Theta_n^\Delta = \{ \bDelta: |\Delta_j | \le \alpha_{j,n}  \forall j = 1, \ldots, p_n\} \subset \R^{p_n}$.
The \emph{Poincar\'e-Miranda theorem} \citep[see for example][]{Kulpa} implies that if 
\[
\max_{1 \le j \le p_n} \sup_{\bDelta \in \Theta_n^\Delta , | \Delta_j | = \alpha_{j,n}}  \sign(\Delta_j) \P_n \phi_U(\btheta^* + r_n C \bDelta)_j \le 0
\]
holds, there is a solution $\hbtheta_U$ of $\P_n \phi_U(\btheta ) = \0$ with $| \hat{\theta}_{j,U} - \theta_j^* | \le r_n C \alpha_{j,n}$ for all $j = 1, \ldots, p_n$.
\textcolor{red}{We show that for any $\eps > 0$, we can choose a constant $C$ such that
\begin{equation}
\label{eq:PoincareM}
\liminf_{n \to \infty} \P\left( \max_{1 \le j \le p_n} \sup_{\bDelta \in \Theta_n^\Delta , | \Delta_j | = \alpha_{j,n}}  (r_n C \alpha_{j,n})^{-1} \sign(\Delta_j) \P_n \phi_U(\btheta^* + r_n C \bDelta)_j  < 0 \right) \ge 1 - \eps.
\end{equation}
This means that, for all $\eps > 0$, we can choose $C_\eps$ and $n_{C_{\eps}}$ such that for all $n \ge n_{C_{\eps}}$, we have 
\[
\P\left( \exists \hbtheta_U \text{ s.t. }\max_{1 \le j \le p_n } | \hat{\theta}_{U,j} - \theta^*_j| \le r_n C_\eps \alpha_{j,n} \right) \ge 1 - \eps.
\]
}
Therefore, with probability tending to 1, the sets $\Theta_n$ contain a solution $\hbtheta_U$ that satisfies $\max_{1 \le j \le p_n } | \hat{\theta}_{U,j} - \theta^*_j| /\alpha_{j,n} = O_p(r_n)$. 
\textcolor{red}{See \citet[][proof of Theorem 1]{Fan} for a somewhat similar proof strategy.}
We have
\begin{align}
\label{eq:proof1}
&  \max_{1 \le j \le p_n} \sup_{\bDelta \in \Theta_n^\Delta , | \Delta_j | = \alpha_{j,n}} (r_n C\alpha_{j,n})^{-1} \sign(\Delta_j) \P_n \phi_U(\btheta^* + r_n C \bDelta)_j  \nonumber \\
& \le   \max_{1 \le j \le p_n} (r_n C\alpha_{j,n})^{-1}  | \P_n \phi_U(\btheta^*)_j |   + \max_{1 \le j \le p_n}  \sup_{\bDelta \in \Theta_n^\Delta , | \Delta_j | = \alpha_{j,n}} (r_n C\alpha_{j,n})^{-1} \sign(\Delta_j)  \P_n \lf[\phi_U(\btheta^* + r_n C \bDelta)_j - \phi_U(\btheta^* )_j  \ri].
\end{align}
\textcolor{red}{Note that $\max_{1 \le j \le p_n} \alpha_{j,n}^{-1} = O(1)$, since $\min_{1 \le j \le p_n} \alpha_{j,n}$ is bounded away from 0 by the assumptions on $\alpha_{j,n}$.
The first term in \eqref{eq:proof1} is of order $C^{-1} O_p(1)$ since $\| \P_n \phi_U(\btheta^*) \|_{\infty} = O_p(\sqrt{\ln p_n / n})$ by \ref{A:ConvRate_new}, \ref{A:phi-moments} and \cref{lem:eta_n}.}
For the second term, we have, with some $c > 0$ and $k_n$ as defined in \ref{A:emp_proc},
\begin{align*}
& \quad \max_{1 \le j \le p_n} \sup_{\bDelta \in \Theta_n^\Delta , | \Delta_j | = \alpha_{j,n}} (r_n C \alpha_{j,n})^{-1} \sign(\Delta_j)  \P_n \lf[\phi_U(\btheta^* + r_n C \bDelta)_j - \phi_U(\btheta^* )_j  \ri] \\
& \le  \max_{1 \le j \le p_n} \sup_{\bDelta \in \Theta_n^\Delta , | \Delta_j | = \alpha_{j,n}}  (r_n C \alpha_{j,n})^{-1} \sign(\Delta_j)  P \lf[\phi_U(\btheta^* + r_n C \bDelta)_j - \phi_U(\btheta^* )_j  \ri]  + \\
& \quad \max_{1 \le j \le p_n} \sup_{ \bDelta \in \Theta_n^{\Delta}}  (r_n C \alpha_{j,n})^{-1} \left| (\P_n - P) \lf[\phi_U(\btheta^* + r_n C \bDelta)_j - \phi_U(\btheta^* )_j  \ri] \right| \\
& \le -c + O_p\lf( \frac{M_n  \sqrt{k_n + \ln p_n}}{\sqrt{n}} + \frac{D_n  (k_n + \ln p_n) }{n} \ri) = -c + o_p(1)
\end{align*}
by \ref{A:curvature}, \ref{A:emp_proc}  and \cref{lem:emp_proc}.
\textcolor{red}{Given any $\eps > 0$, we can choose $C$ large enough such that the first term in \eqref{eq:proof1} is smaller than $c$ with probability $\ge 1 - \eps$.}
The second term in \eqref{eq:proof1} therefore dominates the first one. As this term is negative for $n$ large, this implies the claim.
 
\subsection{Proof of \autoref{theorem2}}
\textcolor{red}{
Define $\tilde r_n =  \alpha_n\sqrt{p_n \ln p_n / n}$.
We have
\begin{align*}
\0 & = \P_n \phi(\hbtheta_U)
= \P_n \phi(\btheta^*) + \P_n [ \phi(\hbtheta_U) - \phi(\btheta^*)] =  \P_n \phi(\btheta^*) + P [ \phi(\hbtheta_U) - \phi(\btheta^*)] + (\P_n - P) [ \phi(\hbtheta_U) - \phi(\btheta^*)]\\
& =  \P_n \phi(\btheta^*) + \nabla_{\btheta} P \phi(\bttheta)(\hbtheta_U - \btheta^*)   + (\P_n - P) [ \phi(\hbtheta_U) - \phi(\btheta^*)], 
\end{align*}
with some $\bttheta$ on the segment between $\hbtheta_U$ and $\btheta^*$.
We have $\nabla_{\btheta} P \phi(\bttheta) = J(\bttheta) $, so
\begin{align*}
- J(\btheta^*)(\hbtheta_U - \btheta^*)  & = \P_n \phi(\btheta^*) + [J(\bttheta) - J(\btheta^*)] (\hbtheta_U - \btheta^*)   + (\P_n - P) [ \phi(\hbtheta_U) - \phi(\btheta^*)]
\quad \text{and} \\
- \sqrt{n} A_n J(\btheta^*)(\hbtheta_U - \btheta^*) 
= & \sqrt{n} A_n \P_n \phi(\btheta^*)   + \sqrt{n} A_n  [J(\bttheta) - J(\btheta^*)] (\hbtheta_U - \btheta^*)   + \sqrt{n} (\P_n - P) A_n [\phi(\hbtheta_U ) - \phi(\btheta^*)].
\end{align*}
The second and the third term are negligible, since
$\sqrt{n} A_n [J(\bttheta) - J(\btheta^*)] (\hbtheta_U - \btheta^*) 
= o_p(\sqrt{n} (\sqrt{n} \tilde r_n)^{-1} \tilde r_n ) = o_p(1)
$
by the third condition in \ref{A:Asymp}, and \cref{lem:lemma10} and \ref{A:Asymp} yield 
$
 \sqrt{n} (\P_n - P) A_n [\phi(\hbtheta_U ) - \phi(\btheta^*)] =o_p(1).
$
It remains to prove a central limit theorem for $\sqrt{n} A_n \P_n \phi(\btheta^*)  = \sumin  \frac{1}{\sqrt{n}} A_n \phi_{i}(\btheta^*) \eqqcolon\sumin \bY_{i}.$
Since
\begin{align*}
\sumin \E \left[ \lVert \bY_{i} \rVert^2 \mathbbm{1}\{\lVert \bY_{i} \rVert > \eps \}  \right] 
& \le \sumin \E \left[ \lVert \bY_{i} \rVert^2 \mathbbm{1}\{\lVert \bY_{i} \rVert > \eps \} \| \bY_{i} \|^2/\eps^2  \right] \leq \sumin \E\left[ \lVert \bY_{i} \rVert^4 \right]/\eps^2,
\end{align*}
and $ \E[ \lVert \bY_{i} \rVert^4] = n^{-2}\E[ \| A_n \phi_{i}(\btheta^*) \|^4 ] = o(n^{-1})$ for all $i = 1, \ldots, n $, by \ref{A:Asymp2}, we have $\sumin \mathbb{E} \left[ \lVert \bY_{i} \rVert^2 \mathbbm{1}\{\lVert \bY_{i} \rVert > \eps \}  \right] \to 0 \text{ for every } \eps > 0.$
Since $\mathbb{E}[\bY_{i}]=\boldsymbol{0}$ for all $i=1, \ldots, n$ and 
\begin{align*}
\sum_{i=1}^n \mathrm{Cov}(\bY_{i}) 
 = \frac 1 n \sumin  \mathrm{Cov}[A_n \phi_{i}(\btheta^*)] 
 =  \frac 1 n A_n\sumin  \mathrm{Cov}[ \phi_{i}(\btheta^*)]  A_n^\top 
=  A_n  I(\btheta^*)  A_n^\top
\to \Sigma,
\end{align*}
the conditions of the Lindeberg-Feller central limit theorem \citep[Section 2.8]{vdV2} are satisfied, and we obtain $\sqrt{n} A_n J(\btheta^*)  (\hbtheta_U - \btheta^*) \to_d \Ncal(\0, \Sigma).$
}
 
\subsection{Proof of \autoref{theorem_semiparam_new}}

Similar to the proof of \cref{theorem_cons_new2}, we show that, by choosing $C$ large enough, the probability of
\begin{equation}
\label{eq:PoincareM}
\max_{1 \le j \le p_n} \sup_{\bDelta \in \Theta_n^\Delta , | \Delta_j | = \alpha_{j,n}}  (r_n C \alpha_{j,n})^{-1} \sign(\Delta_j) \P_n \phi_X(\btheta^* + r_n C \bDelta)_j < 0
\end{equation}
gets arbitrarily close to 1.
This implies that, with probability tending to 1, the sets $\Theta_n$ contain a solution $\hbtheta_X$ that satisfies $\max_{1 \le j \le p_n } | \hat{\theta}_{X,j} - \theta^*_j| /\alpha_{j,n} = O_p( r_n)$.

Denote $\underline{\alpha}_n = \min_{1 \le j \le p_n} \alpha_{j,n}$, is bounded away from 0 by the assumptions on $\alpha_{j,n}$.
It holds that
\begin{align}
\label{eq:margins_nonparam}
&  \max_{1 \le j \le p_n} \sup_{\bDelta \in \Theta_n^\Delta , | \Delta_j | = \alpha_{j,n}} (r_n C\alpha_{j,n})^{-1} \sign(\Delta_j) \P_n \phi_X(\btheta^* + r_n C \bDelta)_j \nonumber \\
& \le   \max_{1 \le j \le p_n} (r_n C \underline{\alpha}_n)^{-1}  | \P_n \phi_X(\btheta^*)_j |  + \max_{1 \le j \le p_n}  \sup_{\bDelta \in \Theta_n^\Delta , | \Delta_j | = \alpha_{j,n}} (r_n C\alpha_{j,n})^{-1} \sign(\Delta_j)  \P_n \lf[\phi_X(\btheta^* + r_n C \bDelta)_j - \phi_X(\btheta^* )_j  \ri]. 
\end{align}
For the first term, we have
\[
(r_n C\underline{\alpha}_n)^{-1}  \| \P_n \phi_X(\btheta^*) \|_{\infty} \le (r_n C\underline{\alpha}_n)^{-1}  \| \P_n \phi_U(\btheta^*) \|_{\infty} + (r_n C\underline{\alpha}_n)^{-1}  \| \P_n \phi_X(\btheta^*)  - \P_n \phi_U(\btheta^*) \|_{\infty}.
\]
The first term is of order \textcolor{red}{$C^{-1}O_p(1)$ by \cref{lem:eta_n}}.
The second term is also of order \textcolor{red}{$C^{-1}O_p(1)$} since $  \| \P_n \phi_X(\btheta^*)  - \P_n \phi_U(\btheta^*) \|_{\infty} = O_p(\sqrt{\ln p_n / n})$ by \cref{lem:margins1}, \ref{A:Var_h1}, \ref{A:AsympSemi_2}, \ref{A:SemiPar_I2} and \textcolor{red}{$d_n^{(3 + 4b)}(\ln d_n)^2/n =O(1)$.}

For the second term in \eqref{eq:margins_nonparam}, we have
\begin{align*}
&  \max_{1 \le j \le p_n}  \sup_{\bDelta \in \Theta_n^\Delta , | \Delta_j | = \alpha_{j,n}} (r_n C\alpha_{j,n})^{-1} \sign(\Delta_j)  \P_n \lf[\phi_X(\btheta^* + r_n C \bDelta)_j - \phi_X(\btheta^* )_j  \ri] \\
& \le  \max_{1 \le j \le p_n}  \sup_{\bDelta \in \Theta_n^\Delta , | \Delta_j | = \alpha_{j,n}} (r_n C\alpha_{j,n})^{-1} \sign(\Delta_j)  \P_n \lf[\phi_U(\btheta^* + r_n C \bDelta)_j - \phi_U(\btheta^* )_j  \ri] + \max_{1 \le j \le p_n}  \sup_{ \bDelta \in \Theta_n^\Delta , | \Delta_j | = \alpha_{j,n}} (r_n C\alpha_{j,n})^{-1}|  \P_n f_j(r_n C \bDelta) | ,
\end{align*}
where
\begin{equation}
\label{eq:def_f}
f_j(\bDelta) \coloneqq \phi_X(\btheta^* +  \bDelta)_j - \phi_X(\btheta^* )_j  - (\phi_U(\btheta^* +  \bDelta)_j - \phi_U(\btheta^* )_j ).
\end{equation}
The first term remains below some $-c < 0$ with probability tending to 1 by \ref{A:curvature} and \ref{A:emp_proc}, see the proof of \cref{theorem_cons_new2}.
The second term is $o_p(1)$ by \cref{lem:margins_cons_remainder}, \ref{A:SemiPar_r}, \ref{A:alpha_max} and \textcolor{red}{$d_n^{(2+2b)} \ln d_n/n \to 0$}.
This implies the claim.
 
\subsection{Proof of \autoref{theorem2_semiparam_new}}
It holds that
\begin{align*}
\0 & = \P_n \phi_X(\hbtheta_X) =  \P_n \phi_X(\btheta^*) +  \P_n [  \phi_X(\hbtheta_X) -  \phi_X(\btheta^*)] \\
& =  \P_n \phi_X(\btheta^*)  +  \P_n [  \phi_U(\hbtheta_X) -  \phi_U(\btheta^*)] +  \P_n [  \phi_X(\hbtheta_X) -  \phi_X(\btheta^*) - ( \phi_U(\hbtheta_X) -  \phi_U(\btheta^*))] .
\end{align*}
Similar to the proof of \cref{theorem2}, we obtain
\begin{align*}
 - \sqrt{n} A_n J(\btheta^*) (\hbtheta_X - \btheta^*)  & =   \sqrt{n} A_n \P_n \phi_X (\btheta^*)  + \sqrt{n} A_n [J(\bttheta)- J(\btheta^*)] (\hbtheta_X - \btheta^*) \\
&\quad + \sqrt{n} (\P_n - P) A_n [\phi_U(\hbtheta_X) - \phi_U(\btheta^*) ]  +  \sqrt{n} A_n  \P_n [  \phi_X(\hbtheta_X) -  \phi_X(\btheta^*) - ( \phi_U(\hbtheta_X) -  \phi_U(\btheta^*))] ,
\end{align*}
with some $\bttheta$ on the segment between $\btheta^*, \hbtheta_X$.
The second and third term are $o_p(1)$ by \ref{A:Asymp}, see the proof of \cref{theorem2}.
For the last term, \cref{lem:margins_cons_remainder}, $\| \hbtheta_X - \btheta^* \|_{\infty} = O_p(\alpha_n \sqrt{\ln p_n / n})$ and  $\| \bx \|_2 \le \sqrt{p_n} \| \bx \|_{\infty}$ for $\bx \in \R^{p_n}$ (with $\bx = \P_n [  \phi_X(\hbtheta_X) -  \phi_X(\btheta^*) - ( \phi_U(\hbtheta_X) -  \phi_U(\btheta^*))]$) yield\textcolor{red}{
\[
\sqrt{n} A_n  \P_n [  \phi_X(\hbtheta_X) -  \phi_X(\btheta^*) - ( \phi_U(\hbtheta_X) -  \phi_U(\btheta^*))]
  = O_p\lf(\alpha_n \sqrt{\frac{p_n d_n^{(2 + 2b)} \ln p_n \ln d_n}{n}}\ri) = o_p(1)
\]
since $\alpha_n^2 p_n d_n^{(2 + 2b)} \ln p_n \ln d_n /n \to 0$.}
For $ \sqrt{n} A_n \P_n \phi_X (\btheta^*) $, it holds that
\begin{align*}
\P_n \phi_X (\btheta^*)  = \P_n \phi_U (\btheta^*) + [ \P_n \phi_X (\btheta^*) - \P_n \phi_U (\btheta^*)]  =  \P_n \phi_U (\btheta^*) +  \P_n \nabla_{\bu} \phi(F(\cdot ); \btheta^*) (F_n(\cdot ) - F(\cdot )) +  I_2  =\tilde I_1 +  I_2
\end{align*}
with $I_2$ as defined in \cref{lem:margins1}.
\cref{lem2_new} and \ref{A:SemiPar2} give \textcolor{red}{$\| I_2 \| =  O_p( \sqrt{ d_n^{(2 + 4b)} p_n (\ln d_n)^2 /n^2 })$, so $\sqrt{n} A_n I_2 = o_p(1)$ since $\| A_n \| = O(1)$ and $d_n^{(2 + 4b)} p_n(\ln d_n)^2 /n \to 0$.}
$\tilde I_1$ can be treated as a U-statistics \citep[see for example][]{vdV2}.
Define
\[
\tilde h(\bX_i, \bX_{i'}) \coloneqq\phi(F(\bX_i); \btheta^*) + \sum_{l=1}^{d_n} \frac{\partial }{\partial u_l} \phi(F(\bX_{i'}); \btheta^*) ( \ind( X_{il} \le X_{i'l}) - F_l(X_{i'l})) \in \R^{p_n},
\]
where $ \frac{\partial }{\partial u_l} \phi(F(\bX_{i'}); \btheta^*) $ denotes the $p_n$-dimensional vector with entries $ \frac{\partial }{\partial u_l} \phi(F(\bX_{i'}); \btheta^*)_k$.
Note that the second term is the same as $h^a(\bX_i, \bX_{i'})$ as defined in \eqref{eq:def_Y}.
Then
\begin{align*}
\frac{n+1}{n-1} \, \tilde I_1 & = \frac{1}{n (n-1)} \sumin \sum_{i'=1, i' \neq i }^n \tilde h(\bX_i,\bX_{i'}) +  \frac{1}{n (n-1)} \sumin  h^a(\bX_i , \bX_i).
\end{align*}
The second term is of order $O_p(\sqrt{p_n}/n)$ by \ref{A:AsympSemi_2}, see \cref{lem:margins1}.
Multiplying this term with $\sqrt{n} A_n$ yields an $o_p(1)$ term since $p_n/n \to 0$ and $\| A_n \| = O(1)$.
Now define the symmetric kernel 
\begin{equation}
\label{eq:hUstat1}
\bar h (\bX_i, \bX_{i'}) = \frac 1 2 \tilde h(\bX_i, \bX_{i'}) + \frac 1 2  \tilde h(\bX_{i'}, \bX_i) 
\end{equation}
and
\begin{equation}
\label{eq:hUstat}\bar h_1(\bx) = \E[\bar h(\bx, \bX)] = \frac 1 2 \phi(F(\bx); \btheta^*) + \frac 1 2 \E\lf[ \sum_{l=1}^{d_n} \frac{\partial }{\partial u_l} \phi(F(\bX); \btheta^*) ( \ind( x_l \le X_l) - F_l(X_l))  \ri],
\end{equation}
since $\E[\tilde h(\bX, \bx)] = \bnull$, as $\E[ \ind (X_{i l} \le X_{i'l}) - F_1(X_{i'l})  | X_{i'l}] = 0$ for all $l = 1, \ldots, d_n,$ $i,i' = 1, \ldots, n$.
Note that also $\E[\bar h_1(\bX)] = \bnull$. 
Then, \emph{Hoeffding's decomposition} \citep{Hoeff} yields
\begin{align*}
 \frac{1}{n (n-1)} \sumin \sum_{i'=1, i' \neq i }^n \tilde h(\bX_i,\bX_{i'})
  = \frac{2}{n} \sumin  \bar h_1(\bX_i)  +  \frac{1}{n (n-1)}  \sumin \sum_{i' = 1, i' \neq i}  \bar h(\bX_i,\bX_{i'}) - \bar  h_1(\bX_i) -  \bar h_1(\bX_{i'})  = I + II.
\end{align*}
\cref{lem:hoeffd_remainder}, \ref{A:Var_h1} and \ref{A:AsympSemi_2} yield $\sqrt{n} A_n II = o_p(1)$ since $p_n/n \to 0$ and $\| A_n \| = O(1)$.
We can apply \cref{lem:hoeffd_remainder} here despite the difference between $\bar h(\bX_i, \bX_{i'})$ and $h(\bX_i, \bX_{i'})$ as defined in \eqref{eq:def_Y}, since the terms $\phi(F(\bX_i); \btheta^*), \phi(F(\bX_{i'}); \btheta^*)$ in $\bar h_1(\bX_i), \bar h_1(\bX_{i'})$ cancel with the same terms in $\bar h(\bX_i, \bX_{i'})$.

It remains to show a central limit theorem for $\sumin 2 / \sqrt{n} \; A_n   \bar h_1(\bX_i) \eqqcolon \sumin \bY_i$.
We write
\begin{align*}
\bar h_1(\bx) = \frac 1 2 \phi(F(\bx); \btheta^*) + \frac 1 2 \int \sum_{l=1}^{d_n} \frac{\partial }{\partial u_l} \phi(\bu; \btheta^*) (\ind(F(x_l) \le u_l) - u_l) d C(\bu) ,
\end{align*}
where $C$ is the copula distribution of $\bX$, i.e., the distribution of $\bU = F(\bX)$.
It holds that
\[
\cov(2 \bar h_1(\bX))  = \cov \lf( \phi(\bxi; \btheta^*) +  \int \sum_{l=1}^{d_n}  \frac{\partial}{\partial u_l} \phi(\bu ; \btheta^*) ( \ind ( \xi_l \le u_l) - u_l) d C(\bu) \ri),
\]
where $\bxi$ is a random variable with distribution $C$.
Therefore 
\[
\lim_{n \to \infty} \cov\lf( \sumin \bY_i \ri)  = \lim_{n \to \infty} A_n \cov(2 \bar h_1(\bX)) A_n^\top = \Sigma_X
\]
with $\Sigma_X$ as defined in \cref{theorem2_semiparam_new}.
Since $\E[\bY_i] = \bnull$ as $\E[\bar h_1(\bX_i)] = \bnull$ and 
\begin{align*}
\sumin \E \left[ \lVert \bY_{i} \rVert^2 \mathbbm{1}\{\lVert \bY_{i} \rVert > \eps \}  \right] 
& \le \sumin \E \left[ \lVert \bY_{i} \rVert^2 \mathbbm{1}\{\lVert \bY_{i} \rVert > \eps \} \| \bY_{i} \|^2/\eps^2  \right] \leq \sumin \E\left[ \lVert \bY_{i} \rVert^4 \right]/\eps^2 = o(1)
\end{align*}
for every $\eps > 0$ by \ref{A:Asymp2} and \ref{A:Asymp2_margins}, the conditions of the Lindeberg-Feller central limit theorem \citep[Section 2.8]{vdV2} are satisfied, and we obtain 
$
\sqrt{n} A_n J(\btheta^*)  (\hbtheta_X - \btheta^*) \to_d \Ncal(\0, \Sigma_X).
$
 
\subsection{Proof of \autoref{prop:trunc}}

\textcolor{red}{
\ref{A:ConvRate_new} immediatly follows from the assumptions.
For \ref{A:phi-moments}, using Markov's inequality, it holds that
\[
 \Pr \left( \| \phi(\bU; \btheta^*) \|_\infty > \sqrt{\frac{\sigma_n^2 n}{4 \ln p_n} } \right) \le p_n \max_{1 \le k \le p_n} \Pr \left(  | \phi(\bU; \btheta^*)_k | > \sqrt{\frac{\sigma_n^2 n}{4 \ln p_n} } \right) \le p_n \frac{\max_{1 \le k \le p_n} \E[(\phi(\bU; \btheta^*))^4]}{\left(\frac{\sigma_n^2 n}{4 \ln p_n } \right)^2} = O\left( \frac{p_n (\ln p_n)^2}{n^2} \right),
\]
which is $o(1/n)$ since $p_n (\ln p_n)^2/n \to 0$.
For \ref{A:curvature}, we have, with some $\bttheta$ on the segment between $\btheta^*$ and $\btheta^* + r_n C_n \bDelta$, 
\[
(r_n C_n \alpha_{j,n})^{-1} \sign(\Delta_j) \, \E \lf[\phi(\bU; \btheta^* + r_n C_n \bDelta)_j \ri] 
 = \E\left[ \frac{\partial}{\partial \theta_j} \phi(\bU; \bttheta)_j \right] + \frac{\sign(\Delta_j)}{\alpha_{j,n}} \sum_{k=1}^{p_n} \Delta_k \E\left[ \frac{\partial}{\partial \theta_k} \phi(\bU; \bttheta)_j \right] \le -c + \frac{C}{ \alpha_{j,n}} \sum_{k \in I_j}\alpha_{k,n},
\]
where $I_j$ contains the indices with $\frac{\partial}{\partial \theta_k} \phi(\bu; \btheta)_j \neq 0$. Due to the truncation, $|I_j| \le 2$ for all $j$, so we can always choose sequences $\alpha_{j,n}, j  =1, \ldots, p_n$ such that \ref{A:curvature} is satisfied and $\max_{1 \le j \le p_n} \alpha_{j,n} = O(1)$, leading to the rate of convergence $\sqrt{\ln p_n / n}$.
For \ref{A:emp_proc}, the truncation and the assumptions imply that the left-hand side of the first expression is bounded, so we may choose $M_n = C$ with some large enough constant $C$. 
Since $k_n = O(1)$, this is valid since $\ln p_n / n \to 0$.
Set $D_n = \sqrt{n p_n} a_n$ with some $a_n \to \infty$ arbitrarily slowly, which is $o(n/\ln p_n)$ since $p_n (\ln p_n)^2/n \to 0$, and observe that
\[
\P\lf(  \max_{1 \le j \le p_n} \sup_{\btheta \in \Theta_n}\sum_{k=1}^{p_n} \lf| \frac{\alpha_{k,n}}{\alpha_{j,n}} \frac{\partial}{\partial \theta_k} \phi(\bU; \btheta)_j \ri|    > D_n\ri) \le p_n \frac{\max_{1 \le j \le p_n} \E\left[ \sup_{\btheta \in \Theta_n} \left( \sum_{k=1}^{p_n} \lf| \frac{\alpha_{k,n}}{\alpha_{j,n}} \frac{\partial}{\partial \theta_k} \phi(\bU; \btheta)_j \ri|\right)^2 \right] }{n p_n a_n^2 } = O\left( \frac{1}{n a_n^2} \right) = o\left( \frac 1 n  \right).
\]
For \ref{A:Asymp}, note that $\| A_n [\phi(\bu; \btheta^* +  \bDelta) - \phi(\bu;\btheta^* +  \bDelta')]\|/ \| \bDelta - \bDelta'\| \lesssim \| \nabla_{\btheta} \phi(\bu; \bttheta) \|$ with some $\bttheta$ on the segment between $\btheta^* + \bDelta, \btheta^* + \bDelta'$.
The assumptions imply that both $\sup_{\btheta \in \Theta_n} \E[\| \nabla_{\btheta} \phi(\bU; \btheta) \|_1^2]$ and $\sup_{\btheta \in \Theta_n} \E[\| \nabla_{\btheta} \phi(\bU; \btheta) \|_\infty^2]$ are $O(1)$, since each row and each column of $\nabla_{\btheta} \phi(\bu; \btheta)$ contains only finitely many non-zero entries that are uniformly bounded.
Since $\| B \|_2^2 \le \| B \|_1 \| B \|_\infty$, this implies that the left-hand side of the first condition in \ref{A:Asymp} bounded and the condition is satisfied since $p_n^2 \ln p_n / n \to 0$.
For the second condition, set $D_n = n^{1/4}$ with $a_n \to \infty$ arbitrarily slowly, which is $o(\sqrt{n} /(\tilde r_n p_n))$ since $p_n^2 (\ln p_n)^2/n \to 0$.
We can then bound the probability in the second expression in \ref{A:Asymp} by 
\[
\frac{\| A_n \| \, \E\left[ \sup_{\btheta \in \Theta_n} \|  \nabla_{\btheta} \phi(\bU; \btheta)\|^4 \right]}{n a_n^{4}} \le \frac{\| A_n \| \sqrt{ \E\left[ \sup_{\btheta \in \Theta_n} \|  \nabla_{\btheta} \phi(\bU; \btheta)\|^4_1 \right] \E\left[ \sup_{\btheta \in \Theta_n} \|  \nabla_{\btheta} \phi(\bU; \btheta)\|^4_\infty \right] }}{n a_n^{4}} = O\left( \frac{1}{n a_n^4} \right) = o\left( \frac 1 n  \right).
\]
Regarding the third condition, note that the absolute value of each entry of $J(\btheta^* + r_n C_n \bDelta) - J(\btheta^*)$ is bounded by $r_n C_n \| \bDelta \| \sup_{\btheta \in \Theta_n}  \| \nabla_{\btheta} ( \frac{\partial}{ \partial \theta_j}  \E[\phi(\bU; \btheta)_k]) \|$.
Each of $\sup_{\btheta \in \Theta_n}  \| \nabla_{\btheta} ( \frac{\partial}{ \partial \theta_j}  \E[\phi(\bU; \btheta)_k]) \|$ is uniformly bounded by the assumptions, so $\sup_{\bDelta} \| J(\btheta^* + r_n C_n \bDelta) - J(\btheta^*)\| = O(r_n C_n p_n) = o((\sqrt{n} \tilde r_n)^{-1})$ since $p_n^2 \ln p_n / n \to 0$.
\ref{A:Asymp2} follows since $\E[\| \phi(\bU; \btheta^*) \|^4 ] = O(p_n^2)$ by the assumptions and $p_n^2/n \to 0$.}

\section{Lemmas}

{\color{red}
\begin{lemma} \label[lemma]{lem:eta_n}
  Under assumption \ref{A:phi-moments} and $p_n \to \infty$, it holds that
$ \left\| \P_n \phi_U(\btheta^*) \right\|_{\infty} =O_p(\sigma_n \sqrt{\ln p_n / n}).$
 \end{lemma}
 \begin{proof}
  Denote $\eta_n = 2\sigma_n \sqrt{\ln p_n/n}$ and $B_n = \sigma_n \sqrt{n/(4\ln p_n)}$.
  Using \cref{lem:truncation} and \ref{A:phi-moments}, we obtain $\left\| \P_n \phi_U(\btheta^*) \right\|_{\infty} \le \left\| (\P_n - P)\phi_U(\btheta^*) \ind_{\|\phi_U(\btheta^*)\|_\infty \le B_n}\right\|_{\infty} + o_p(\eta_n)$.
  Further, the union bound and Bernstein's inequality give
  \begin{align*}
     \Pr\left(\left\| (\P_n - P) \phi_U(\btheta^*) \ind_{\|\phi_U(\btheta^*)\|_\infty \le B_n}  \right\|_{\infty} > \eta_n\right)
   &   \le  2\, p_n \max_{1 \le k \le p_n} \exp\left(-\frac{\frac 1 2  \eta_n^2 }{\frac 1 n \sigma_n^2 + \frac 1 3 \eta_n B_n/n}\right)  \le  2 \exp\left(\ln p_n -\frac{\eta_n^2 n}{2 \sigma_n^2 + \eta_n B_n}\right) \\
    & =  2 \exp\left(\ln p_n -\frac{\eta_n^2 n}{3 \sigma_n^2 }\right) =  2 \exp\left(\ln p_n - \frac 4 3 \ln p_n\right)  = o(1)
  \end{align*}
  and therefore $ \left\| (\P_n - P)\phi_U(\btheta^*) \ind_{\|\phi_U(\btheta^*)\|_\infty \le B_n}\right\|_{\infty} = O_p(\eta_n)$, which implies the claim.
 \end{proof}
}
\begin{lemma}
\label{lem:emp_proc}
Under assumption \ref{A:emp_proc}, it holds that
\[
\sup_{f \in \Fcal_n} | (\P_n - P) f| = O_p\lf( \frac{M_n c_n \sqrt{k_n + \ln p_n}}{\sqrt{n}} + \frac{D_n c_n (k_n + \ln p_n) }{n} \ri),
\]
where
$
\Fcal_n = \{ f_{\bDelta,j}(\bu) =\alpha_{j,n}^{-1}  [ \phi(\bu ; \btheta^* + \bDelta)_j - \phi(\bu; \btheta^*)_j ]: \forall j' = 1, \ldots, p_n : | \Delta_{j'} | \le c_n \alpha_{j',n},   j  =1, \ldots, p_n \}
$
with $c_n = r_n C$ for some $C < \infty$ and $D_n, M_n$ and $k_n$ as defined in \ref{A:emp_proc}.
\end{lemma}

\begin{proof}
Define
\[
F_n(\bu) \coloneqq\max_{1 \le j \le p_n} \sup_{\btheta \in \Theta_n} \sum_{k=1}^{p_n} \lf| \frac{\alpha_{k,n}}{\alpha_{j,n}} \frac{\partial}{\partial \theta_k} \phi(\bu; \btheta)_j  \ri|.
\]
$F_n$ is an envelope for the functions in $c_n^{-1} \Fcal_n$, i.e., $\sup_{f \in \Fcal_n} c_n^{-1} | f(\bu) | \le  F_n(\bu)$, since a Taylor expansion and Hölder's inequality with $p = 1, q = \infty$ give, with some $\btheta$ on the segment between $\btheta^*$ and $\btheta^* + \bDelta$,
\[
| f_{\bDelta,j}(\bu) | =  \alpha_{j,n}^{-1}\lf| \bDelta^T \nabla_{\btheta} \phi(\bu; \btheta)_j \ri| \le c_n \sum_{k=1}^{p_n} \lf| \frac{\alpha_{k,n}}{\alpha_{j,n}} \frac{\partial}{\partial \theta_k} \phi(\bu; \btheta)_j  \ri|.
\] 
\cref{lem:truncation} gives
$
\sup_{f \in \Fcal_n} | (\P_n - P) f|  \le \sup_{f \in \Fcal_n} | (\P_n - P) f \ind_{F_n \le D_n}| + o_p (\sqrt{\sup_{f \in \Fcal_n} P f^2/n })
$
with $D_n$ as defined in \ref{A:emp_proc}.
The second term is $o_p(M_n c_n / \sqrt{n})$, since the definition of $M_n$ yields
\begin{align*}
 \sup_{f \in \Fcal_n} P f^2 
 \le \max_{1 \le j \le p_n} \sup_{\btheta \in \Theta_n} c_n^2 \ \E\lf[ \lf(\sum_{k=1}^{p_n}\lf| \frac{\alpha_{k,n}}{\alpha_{j,n}} \frac{\partial}{\partial \theta_k} \phi(\bU; \btheta)_j  \ri| \ri)^2 \ri]  \le c_n^2 M_n^2.
\end{align*}

For the first term, we proceed as following:
Fix $j$ and consider $\Fcal_n^{(j)} = \{ f_{\bDelta, j} \ind_{F_n \le D_n}: \forall j' = 1, \ldots, p_n : | \Delta_{j'} | \le c_n \alpha_{j',n} \}$.
We need to bound the covering numbers $N(\eps, \Fcal_n^{(j)}, L_2(P))$ and $N(\eps, \Fcal_n^{(j)}, \| \cdot \|_{\infty})$ \citep[see][Chapter 2.1.1 for a definition]{vdV}, where $\| f \|_{L_2(P)}^2 = P f^2$ and $\| f \|_{\infty} = \sup_{\bu} | f(\bu) |$.

Denote $\tilde I_{n,j} \subseteq \{1, \ldots, p_n \}$ the set of indices $k$ such that $\partial  \phi(\bu; \btheta)_j / \partial \theta_k = 0$ for all $\btheta$ and $\bu$ and denote $I_{n, j} = \{ 1, \ldots, p_n \} \setminus \tilde I_{n,j}$.
By the above definition of $k_n$, it holds $| I_{n,j} | \le k_n$ for each $j$, i.e., $\nabla_{\btheta} \phi(\bu; \btheta)_j $ has at most $k_n$ non-zero entries.
It holds that
\begin{align*}
 & \| f_{\bDelta ,j} - f_{\bDelta', j} \|^2_{L_2(P)} \le \sup_{\btheta \in \Theta_n} \E \lf[ ( \alpha_{j,n}^{-1} (\bDelta - \bDelta')^T \nabla_{\btheta} \phi(\bU; \btheta)_j)^2 \ri] \le  M_n^2 \max_{k \in I_{n,j}} \lf| \frac{\Delta_k}{\alpha_{k,n}} -  \frac{\Delta'_k}{\alpha_{k,n}}  \ri|^2 \quad \text{and} \\
  & \| ( f_{\bDelta ,j} - f_{\bDelta', j}) \ind_{F_n \le D_n} \|_{\infty} 
   = \sup_{\bu: F_n (\bu) \le D_n} | f_{\bDelta ,j}(\bu) - f_{\bDelta', j} (\bu) | \\
&  \le \max_{k \in I_{n,j}} \lf| \frac{\Delta_k}{\alpha_{k,n}} -  \frac{\Delta'_k}{\alpha_{k,n}}  \ri|  \sup_{\bu: F_n (\bu) \le D_n, \btheta \in \Theta_n} \sum_{k=1}^{p_n} \lf| \frac{\alpha_{k,n}}{\alpha_{j,n}} \frac{\partial}{\partial \theta_k} \phi(\bu; \btheta)_j  \ri|  \le  \max_{k \in I_{n,j}} \lf| \frac{\Delta_k}{\alpha_{k,n}} -  \frac{\Delta'_k}{\alpha_{k,n}}  \ri| D_n.
\end{align*}
Note that $| \Delta_k / \alpha_{k,n} | \le c_n$ for all $k$ by the definition of $\bDelta$ and therefore $| \Delta_k / \alpha_{k,n} \, - \, \Delta'_k / \alpha_{k,n} | \le 2 c_n$.
Now let $ \bDelta_1, \ldots,  \bDelta_N$ be the centers of an $\eta$-covering of $\{ \bDelta \in \R^{p_n}: \| \bDelta \|_{\infty} \le 2 c_n, \Delta_{j'} = 0 \forall j' \notin I_{j,n}\}$ w.r.t.~$\| \cdot \|_{\infty}$, which we can find with $N = (2 c_n/\eta)^{k_n}$.
Then, $f_{\bDelta_1, j}, \ldots, f_{\bDelta_N,j}$ are the centers of an $M_n \eta$-covering of $\Fcal_n^{(j)}$ w.r.t.~$L_2(P)$ and a $D_n \eta$-covering of $\Fcal_n^{(j)}$ w.r.t.~$\| \cdot \|_{\infty}$.
Choosing $\eta = \eps / M_n$ and $\eta = \eps / D_n$, respectively, gives $N(\eps, \Fcal^{(j)}_n, L_2(P)) \le ( 2 M_n c_n /\eps)^{k_n}$ and $N(\eps, \Fcal^{(j)}_n, \| \cdot \|_{\infty}) \le ( 2  D_n c_n/\eps)^{k_n}$.
Taking the union over the coverings of $\Fcal_n^{(j)}, j =1, \ldots, p_n$, we obtain a covering of $\Fcal_n$ and
\[
\ln N(\eps, \Fcal_n,  L_2(P)) \le \ln p_n + k_n \ln ( 2  M_nc_n/\eps), \quad \ln N(\eps, \Fcal_n,  \| \cdot \|_{\infty}) \le \ln p_n + k_n \ln ( 2   D_nc_n/\eps).
\]
Theorem 2.14.21 in \citet{vdV} yields
\[
\E \lf[\sup_{f \in \Fcal_n} | (\P_n - P) f|  \ri] \lesssim  \frac{\int_0^{M_n c_n } \sqrt{1 + \ln N(\eps, \Fcal_n,  L_2(P))} d \eps}{\sqrt{n}} + \frac{\int_0^{D_n c_n} [1 +\ln N(\eps, \Fcal_n,  \| \cdot \|_{\infty}) ] d \eps}{n},
\]
where $\lesssim$ means ``bounded up to a universal constant''.
The change of variables $t = \eps/(M_n c_n)$ and $t = \eps/(D_n c_n)$ gives
\[
\E \lf[\sup_{f \in \Fcal_n} | (\P_n - P) f|  \ri] = O\left( \frac{M_n c_n \sqrt{k_n + \ln p_n}}{\sqrt{n}} + \frac{D_n c_n (k_n + \ln p_n) }{n} \right).
\]
The claim now follows from Markov's inequality.
\end{proof}
 
\begin{lemma}
\label{lem:margins1}
Under \ref{A:Var_h1}, \ref{A:AsympSemi_2}, \ref{A:SemiPar_I2} and \textcolor{red}{$d_n^{3 + 4b}(\ln d_n)^2/n =O(1)$}, it holds that
$
 \| \P_n \phi_X(\btheta^*)  - \P_n \phi_U(\btheta^*) \|_{\infty} = O_p(\sqrt{\ln p_n / n}).
$
\end{lemma}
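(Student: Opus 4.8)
The plan is to compare $\P_n\phi_X(\btheta^*)$ with $\P_n\phi_U(\btheta^*)$ through a second-order Taylor expansion of $\phi(\,\cdot\,;\btheta^*)$ in its first argument. Writing $\bv_i = F_n(\bX_i)-F(\bX_i)$ with components $v_{im}=F_{nm}(X_{im})-F_m(X_{im})$, Taylor's theorem gives, for each coordinate $k$,
\[
\phi(F_n(\bX_i);\btheta^*)_k-\phi(F(\bX_i);\btheta^*)_k=\big(\nabla_{\bu}\phi(F(\bX_i);\btheta^*)\,\bv_i\big)_k+\tfrac12\sum_{m,l}\partial_{ml}\phi(\tilde{\bu}_i;\btheta^*)_k\,v_{im}v_{il},
\]
with $\tilde{\bu}_i$ on the segment between $F(\bX_i)$ and $F_n(\bX_i)$. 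Averaging over $i$ splits $\P_n\phi_X(\btheta^*)-\P_n\phi_U(\btheta^*)$ into a linear part and a quadratic remainder, and I would bound each in $\|\cdot\|_\infty$ separately.

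For the linear part $\frac1n\sum_i\nabla_{\bu}\phi(F(\bX_i);\btheta^*)\bv_i$, I would first replace $v_{im}$ by $n^{-1}\sum_{i'}(\ind(X_{i'm}\le X_{im})-F_m(X_{im}))$, the error from the $n$-versus-$(n+1)$ normalisation and the self-term $i'=i$ being of lower order in $\|\cdot\|_\infty$ by \ref{A:AsympSemi_2}. The main part then equals $n^{-2}\sum_{i,i'}\bY_{ii'}$ with $\bY_{ii'}$ as in \eqref{eq:def_Y}; removing the diagonal (again lower order, since $\bY_{ii}=h(\bX_i,\bX_i)$) and symmetrising turns the off-diagonal sum into a U-statistic with the symmetric kernel $h$ of \eqref{eq:h_def_assump}. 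Since $\E[h(\bX,\bX')]=\0$, \emph{Hoeffding's decomposition} \citep{Hoeff} produces the projection $\frac2n\sum_i h_1(\bX_i)$ plus a degenerate remainder. Assumption \ref{A:Var_h1} together with Lemma 1 in \citet{Gauss24} yields $\|\frac1n\sum_i h_1(\bX_i)\|_\infty=O_p(\sqrt{\ln p_n/n})$, while \ref{A:AsympSemi_2} and \ref{A:Var_h1} bound the degenerate part in $L_2$ by $O_p(\sqrt{p_n}/n)$, which is of lower order. This reproduces the stated rate for the linear contribution.

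The main obstacle is the quadratic remainder $R_{B,k}=\frac1{2n}\sum_i\sum_{m,l}\partial_{ml}\phi(\tilde{\bu}_i;\btheta^*)_k v_{im}v_{il}$, where the curvature is evaluated at the data-dependent point $\tilde{\bu}_i$, so no uniform constant can simply be factored out. I would work on the event $\{F_n\in\Gcal_n\}$, which has probability $1-o(1)$ by a weighted Dvoretzky--Kiefer--Wolfowitz bound: the band defining $\Gcal_n$ is tuned so that the weighted empirical process supremum is of order $d_n^b\sqrt{\ln d_n/n}$. On this event $\tilde{\bu}_i=G_i(\bX_i)$ for some $G_i\in\Gcal_n$, and Cauchy--Schwarz over the pair $(m,l)$ gives
\[
\max_{k}|R_{B,k}|\le\frac1{2n}\sum_i R_i\,\|\bv_i\|^2,\qquad R_i:=\sup_{k,\,G\in\Gcal_n}\Big(\sum_{m,l}|\partial_{ml}\phi(G(\bX_i);\btheta^*)_k|^2\Big)^{1/2}.
\]
Taking expectations and applying Cauchy--Schwarz across $i$, I would use $\E[R_1\|\bv_1\|^2]\le\sqrt{\E[R_1^2]}\,\sqrt{\E[\|\bv_1\|^4]}$, where \ref{A:SemiPar_I2} supplies $\E[R_1^2]=O(d_n\ln p_n)$ and a fourth-moment estimate for the empirical process gives $\E[\|\bv_1\|^4]=O(d_n^2/n^2)$. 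This bounds the remainder by $O_p(d_n^{3/2}\sqrt{\ln p_n}/n)$.

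The condition $d_n^3/n=O(1)$, i.e.\ $d_n^{3/2}=O(\sqrt n)$, is exactly what collapses this bound to $O_p(\sqrt{\ln p_n/n})$, matching the target; Markov's inequality together with $\Pr(F_n\notin\Gcal_n)=o(1)$ then makes the remainder $O_p(\sqrt{\ln p_n/n})$ unconditionally. The delicate point throughout is the decoupling in the quadratic term: one must keep the two factors $\sqrt{\E[R_1^2]}$ and $\sqrt{\E[\|\bv_1\|^4]}$ separate---controlling the Hessian of $\phi$ uniformly over $\Gcal_n$ via \ref{A:SemiPar_I2} and the empirical-process fluctuations via their fourth moment---so that it becomes transparent where the dimension restriction $d_n^3/n=O(1)$ enters.
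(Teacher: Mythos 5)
Your proposal follows essentially the same route as the paper's proof: the same first-order Taylor expansion splitting the difference into a linear term and a quadratic remainder, the same treatment of the linear term as a U-statistic via the kernel $\bY_{ii'}$, symmetrisation, and Hoeffding's decomposition (with \ref{A:Var_h1} controlling the projection and \ref{A:AsympSemi_2} the diagonal and degenerate parts), and the same Cauchy--Schwarz/Frobenius-norm decoupling of the Hessian term over $\Gcal_n$ against the fourth moment of the empirical-process fluctuations, which is exactly where \ref{A:SemiPar_I2} and $d_n^3/n = O(1)$ enter in the paper's \cref{lem:I2_margins_cons}. The argument is correct and matches the paper's in all essentials.
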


\begin{proof}
A Taylor expansion gives
\begin{align*}
\P_n [\phi_X (\btheta^*) - \phi_U (\btheta^*) ] 
& =  \frac 1 n \sumin \phi ( F_n(\bX_i) ; \btheta^*) - \phi ( F(\bX_i) ; \btheta^*) =  \frac 1 n \sumin \underbrace{\nabla_{\bu} \phi(F(\bX_i); \btheta^*)}_{\in \R^{p_n \times d_n}} (F_n(\bX_i ) - F(\bX_i )) + I_2 \\
& =
\frac{1}{n ( n+1)} \sumin \sum_{i'=1}^n  \nabla_{\bu} \phi(F(\bX_i); \btheta^*) 
 \begin{pmatrix}
 \ind (X_{i'1} \le X_{i1}) - F_1(X_{i1}) \\
 \vdots \\
  \ind (X_{i'd_n} \le X_{id_n}) - F_{d_n}(X_{id_n})
 \end{pmatrix}+ I_2   = I_1 + I_2,
\end{align*}
where
\begin{equation}
\label{eq:lem1_I2}
I_2  = \frac 1 n  \sumin I_{2,i} = \frac 1 n \sumin \begin{pmatrix}
 (F_n(\bX_i ) - F(\bX_i ))^\top  \nabla^2_{\bu} \phi(\btu_i ; \btheta^*)_1  (F_n(\bX_i ) - F(\bX_i )) \\
 \vdots \\
   (F_n(\bX_i ) - F(\bX_i ))^\top\nabla^2_{\bu} \phi(\btu_i; \btheta^*)_{p_n}  (F_n(\bX_i ) - F(\bX_i )) 
 \end{pmatrix} 
\end{equation}
with some $\btu_i$ on the segment between $F_n(\bX_i), F(\bX_i)$, and $\phi(\bu; \btheta)_k$ denotes the $k$-th entry of $\phi(\bu; \btheta)$.
Recall the definitions in \eqref{eq:def_Y}.
By the definition of $h^a$, we have $I_1 = (n (n+1))^{-1} \sumin \sum_{i'=1}^n h^a(\bX_i, \bX_{i'})$.
 \emph{Hoeffding's decomposition} yields
 \begin{align*}
  \frac{1}{n ( n+1)}  \sumin \sum_{i'=1}^n   h^a(\bX_i, \bX_{i'}) & =  \frac{1}{n ( n+1)}  \sumin \sum_{i'=1}^n   h(\bX_i, \bX_{i'}) =  \frac{1}{n ( n+1)}  \sumin  h(\bX_i, \bX_i)  +  \frac{1}{n ( n+1)}  \sumin \sum_{i'=1, i' \neq i}^n   h(\bX_i, \bX_{i'}) \\
  & =  \frac{1}{n ( n+1)}  \sumin  h(\bX_i, \bX_i)  +  \frac 2 n \sumin  h_1(\bX_i)   +    \frac{1}{n ( n+1)}  \sumin \sum_{i'=1, i' \neq i}^n h(\bX_i, \bX_{i'}) -   h_1(\bX_i) -   h_1(\bX_{i'}).
 \end{align*}
 Since $| h(\bX_i, \bX_i)_k | = O_p(1)$ uniformly for all $k = 1, \ldots, p_n$ by \ref{A:AsympSemi_2}, we have
 \[
 \lf\|   \frac{1}{n ( n+1)}  \sumin  h(\bX_i, \bX_i) \ri\|_{\infty} \le  \lf\|   \frac{1}{n ( n+1)}  \sumin  h(\bX_i, \bX_i) \ri\|_2 = O_p(\sqrt{p_n} / n) = o_p(\sqrt{\ln p_n / n}).
 \]
\cref{lem:eta_n} and \ref{A:Var_h1} yield $\| \frac 2 n \sumin  h_1(\bX_i) \|_{\infty} = O_p(\sqrt{\ln p_n / n})$.
\cref{lem:hoeffd_remainder}, \ref{A:Var_h1} and \ref{A:AsympSemi_2} imply 
\[
 \lf\|   \frac{1}{n ( n+1)}  \sumin \sum_{i'=1, i' \neq i}^n h(\bX_i, \bX_{i'}) -   h_1(\bX_i) -   h_1(\bX_{i'}) \ri\|_{\infty} = o_p(\sqrt{\ln p_n / n}),
 \]
 so $\| I_1 \|_{\infty} = O_p(\sqrt{\ln p_n / n})$.
\cref{lem:I2_margins_cons}, \ref{A:SemiPar_I2} and $d_n^{3 + 4b}(\ln d_n)^2/n =O(1)$ yield $\| I_2 \|_{\infty} = O_p(\sqrt{\ln p_n / n})$.
Since
 \[
  \| \P_n \phi_X(\btheta^*)  - \P_n \phi_U(\btheta^*) \|_{\infty}  \le \| I_1 \|_{\infty} +  \| I_2 \|_{\infty},
 \]
 this implies the claim.
\end{proof}

\begin{lemma}
\label{lem:hoeffd_remainder}
Under assumption \ref{A:Var_h1} and \ref{A:AsympSemi_2}, with $h(\bX_i, \bX_{i'})$ and $h_1(\bx)$ as defined in \eqref{eq:def_Y}, it holds that
\[
\| II \|_2 =  \lf\| \frac{1}{n (n-1)} \sumin \sum_{i'  =1, i' \neq i}^n h(\bX_i,\bX_{i'}) -   h_1(\bX_i) -   h_1(\bX_{i'})  \ri\|_2 = O_p(\sqrt{p_n}/n).
\]
\end{lemma}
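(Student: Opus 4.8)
The plan is to recognize $II$ as a vector of degenerate second-order U-statistics and to bound each component's variance. First I would introduce the symmetric kernel $g(\bX_i, \bX_{i'}) \coloneq h(\bX_i, \bX_{i'}) - h_1(\bX_i) - h_1(\bX_{i'})$, so that $II = \frac{1}{n(n-1)} \sumin \sum_{i'=1, i' \neq i}^n g(\bX_i, \bX_{i'})$. The crucial observation is that $g$ is completely degenerate with mean zero: since $h_1(\bx) = \E[h(\bx, \bX)]$ and $\E[h_1(\bX)] = \bnull$ (which follows from the definition \eqref{eq:h_def_assump}, because each coordinate $\ind(X_{i'l} \le X_{il}) - F_l(X_{il})$ has conditional mean zero given $\bX_i$, so that $\E[\bY_{ii'}]=\bnull$), we obtain $\E[g(\bx, \bX)] = h_1(\bx) - h_1(\bx) - \E[h_1(\bX)] = \bnull$ for every $\bx$. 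Thus the Hájek projection of $g$ vanishes and $II$ is exactly the completely degenerate remainder in Hoeffding's decomposition.

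Next, working component by component, I would apply the classical variance identity for a degenerate symmetric kernel $g_k$. Because $g_k$ is degenerate, any covariance $\cov(g_k(\bX_i, \bX_{i'}), g_k(\bX_j, \bX_{j'}))$ for which the index sets $\{i, i'\}$ and $\{j, j'\}$ share at most one element vanishes (condition on the shared coordinate and use $\E[g_k(\bx, \bX)] = 0$). Only the fully overlapping index pairs contribute, and counting ordered quadruples together with the symmetry of $g_k$ gives $\var(II_k) = \frac{2}{n(n-1)} \E[g_k(\bX_1, \bX_2)^2]$; this is the standard degenerate U-statistic variance under the $1/(n(n-1))$ normalization (cf.\ \citet{Hoeff}, \citet{vdV2}).

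The remaining step is a uniform moment bound $\E[g_k(\bX_1, \bX_2)^2] = O(1)$. Using $(a - b - c)^2 \le 3(a^2 + b^2 + c^2)$, I would estimate $\E[g_k^2] \le 3\big(\E[h(\bX_1, \bX_2)_k^2] + \E[h_1(\bX_1)_k^2] + \E[h_1(\bX_2)_k^2]\big)$, where the first term is $O(1)$ uniformly in $k$ by \ref{A:AsympSemi_2} and the last two are each bounded by $\sigma_n^2 = O(1)$ by \ref{A:Var_h1}. Summing over $k$ then yields $\E[\|II\|_2^2] = \sum_{k=1}^{p_n} \var(II_k) = O(p_n / n^2)$, and applying Markov's inequality to $\|II\|_2^2$ gives $\|II\|_2 = O_p(\sqrt{p_n}/n)$, as claimed.

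The main obstacle is really the degeneracy bookkeeping rather than any delicate estimate: one must verify that the first-order projection of $g$ vanishes, so that the generic $O(1/n)$ leading term of a U-statistic is absent and the variance drops to the smaller order $1/n^2$, and then carry the scalar degenerate-kernel variance identity through the vector-valued, high-dimensional setting while keeping the second-moment bounds uniform in the component index $k$. Once those two points are in place, summing over the $p_n$ coordinates and Markov's inequality finish the argument.
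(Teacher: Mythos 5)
Your proposal is correct and follows essentially the same route as the paper: the paper also works with the kernel $\eta = h - h_1 - h_1$, expands $\E[\|II\|_2^2]$ as a quadruple sum, kills all cross terms with $\{i_1,i_1'\}\neq\{i_2,i_2'\}$ via the degeneracy $\E[\eta(\bX_i,\bX_{i'})_k\mid\bX_i]=0$, bounds the remaining $O(p_n n^2)$ diagonal terms by $O(1)$ using \ref{A:Var_h1} and \ref{A:AsympSemi_2}, and concludes by Markov. Your invocation of the standard degenerate U-statistic variance identity is just a packaged form of that same computation.
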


\begin{proof}
Define $\eta (\bX_i, \bX_{i'}) \coloneqq h(\bX_i,\bX_{i'}) -   h_1(\bX_i) -   h_1(\bX_{i'}).$
It holds that
\begin{align}
\E\lf[  \|II \|_2^2 \ri] & = \frac{1}{n^2(n-1)^2} \sum_{k=1}^{p_n} \E\lf[ \lf( \sumin \sum_{i' = 1, i' \neq i}^n \eta(\bX_i, \bX_{i'})_k  \ri)^2 \ri] \nonumber \\
& = \frac{1}{n^2(n-1)^2} \sum_{k=1}^{p_n} \sum_{i_1 = 1}^n \sum_{i'_1 = 1 ,  i'_1 \neq i_1}^n  \sum_{i_2 = 1}^n \sum_{i'_2 = 1, i'_2 \neq i_2}^n\E\lf[ \eta(\bX_{i_1}, \bX_{i'_1})_k  \eta(\bX_{i_2}, \bX_{i'_2})_k \ri] .  \label{eq:sum_eta}
\end{align}
In this sum, all terms with $\{ i_1, i'_1 \} \neq \{ i_2, i'_2\}$ are zero.
To see this, first note that
\begin{align*}
\E[ \eta(\bX_i, \bX_{i'})_k   | \bX_i] & = \E[ h (\bX_i, \bX_{i'})_k | \bX_i] - \E[ h_1(\bX_i)_k | \bX_i] -  \E[ h_1(\bX_{i'})_k | \bX_i] \\
&  =  h_1(\bX_i)_k -  h_1(\bX_i)_k -   \E[ h_1(\bX_{i'})_k ] =  h_1(\bX_i)_k -  h_1(\bX_i)_k - 0  = 0,
\end{align*}
where $  \E[ h_1(\bX_{i'})_k ] = 0$ follows from $\E[ \ind (X_{i' l} \le X_{il}) - F_1(X_{il})  | X_{il}] = 0$ for all $l = 1, \ldots, d_n,$ $i,i' = 1, \ldots, n$.
Now, with $i_1 = i_2$ but $i'_1 \neq i'_2$ (the case $i_1 \neq i_2$, $i'_1 = i'_2$ works the same), we have
\begin{align*}
\E\lf[\eta(\bX_{i_1}, \bX_{i'_1})_k  \eta(\bX_{i_2}, \bX_{i'_2})_k \ri] & = 
\E\lf[\E[ \eta(\bX_{i_1}, \bX_{i'_1})_k  \eta(\bX_{i_1}, \bX_{i'_2})_k  | \bX_{i_1}, \bX_{i'_2}]\ri] = \E\lf[\E[ \eta(\bX_{i_1}, \bX_{i'_1})_k   | \bX_{i_1}, \bX_{i'_2}] \eta(\bX_{i_1}, \bX_{i'_2})_k \ri] \\
& =  \E\lf[\E[ \eta(\bX_{i_1}, \bX_{i'_1})_k   | \bX_{i_1}] \eta(\bX_{i_1}, \bX_{i'_2})_k \ri] = 0.
\end{align*}

Each of the remaining $O(p_n n^2)$ terms in \eqref{eq:sum_eta} is $O(1)$ since \ref{A:Var_h1} and\ref{A:AsympSemi_2} imply $\E[\eta (\bX_i, \bX_{i'})_k^2] = O(1)$ uniformly for all $k, i, i'$.
Therefore $\E[  \|II \|_2^2] = O(p_n/n^2)$, so $\| II \|_2 = O_p(\sqrt{p_n}/n)$.
\end{proof}

\begin{lemma}
\label{lem:I2_margins_cons}
Under assumptions \ref{A:SemiPar_I2}, with $I_2$ as defined in \eqref{eq:lem1_I2}, it holds that \textcolor{red}{
$
\| I_2 \|_{\infty} = O_p\lf(\sqrt{ d_n^{(3 + 4b)} (\ln d_n)^2 \ln p_n /n^2 } \ri).
$}
\end{lemma}
\begin{proof}
We use $| \langle A, B \rangle_F | \le  \| A \|_F \| B \|_F$ for matrices $A,B$ with $\langle A, B \rangle_F = \sum_{ml} a_{ml} b_{ml}$ and $\| A \|_F = \sqrt{\langle A, A \rangle_F}$. 
The inequality follows directly from Cauchy-Schwarz inequality when treating a matrix $A \in \R^{l \times m }$ as a vector in $\R^{lm}$.
$I_2$ depends on $\btu_i, i = 1, \ldots, n$ on segments between $F_n(\bX_i)$ and $F(\bX_i)$.
\cref{lem:emp_cdfs} implies that we can bound terms involving these $\btu_i$ by taking the supremum over $G \in \Gcal_n$ as defined in \ref{A:SemiPar_I2}.
\textcolor{red}{Let $w(s) = s^\gamma (1-s)^\gamma$ for some $\gamma \in (0,1)$.
Using $\| f \|_{\infty} = \sup_{x \in \R} | f(x) |$ for a function $f \colon \R \to \R$ and the definition of $\partial_{ml} \phi(\bu; \btheta)_k$ in \ref{A:SemiPar_I2}, we obtain
\begin{align*}
\| I_2 \|_{\infty} & \le \frac{1}{n} \sumin  \sup_{1 \le k \le p_n, G \in \Gcal_n}  \sum_{m = 1}^{d_n}  \sum_{l = 1}^{d_n}  |\partial_{ml} \phi(G(\bX_i); \btheta^*)_k  (F_{nm}(X_{im}) - F_m(X_{im})) (F_{nl}(X_{il}) - F_l(X_{il}))  | \\
& \le \frac{1}{n} \sumin \sup_{1 \le k \le p_n,  G \in \Gcal_n} \sum_{m = 1}^{d_n}  \sum_{l = 1}^{d_n}  \underbrace{  |   \partial_{ml} \phi(G(\bX_i); \btheta^*)_k  w(F_m(X_{im})) w(F_l(X_{il})) | }_{a_{ml, k}(G, \bX_i)}  \underbrace{ \left\| \frac{F_{nm} - F_m }{w(F_m)}\right\|_{\infty}  \left\| \frac{F_{nl} - F_l }{w(F_l)}\right\|_{\infty} }_{b_{ml}}\\
& \le  \| B \|_F \,\frac{1}{n} \sumin \sup_{1 \le k \le p_n,  G \in \Gcal_n} \| A_{G, k}(\bX_i) \|_F 
\end{align*}
with matrices $A_{G, k}, B \in \R^{d_n \times d_n}$.
We have, with $b$ as defined in \ref{A:SemiPar_I2},
\[
\| B \|_F^2 = \sum_{m = 1}^{d_n}  \sum_{l = 1}^{d_n} \left\| \frac{F_{nm} - F_m }{w(F_m)}\right\|_{\infty}^2  \left\| \frac{F_{nl} - F_l }{w(F_l)}\right\|_{\infty}^2 \le d_n^2 \sup_{1 \le m \le d_n}\left\| \frac{F_{nm} - F_m }{w(F_m)}\right\|_{\infty}^4 = O_p\left(\frac{d_n^{(2 + 4b)} (\ln d_n)^2}{n^2}\right)
\]
by \cref{lem:emp_cdfs}.
The claim now follows from \ref{A:SemiPar_I2}, since
\[
\frac{1}{n} \sumin \sup_{1 \le k \le p_n,  G \in \Gcal_n} \| A_{G, k}(\bX_i) \|_F 
= O_p\left(\E\left[\sup_{1 \le k \le p_n,  G \in \Gcal_n} \| A_{G, k}(\bX) \|_F \right]  \right) = O_p(\sqrt{d_n \ln p_n}).
\]
}
\end{proof}
 \begin{lemma}
\label{lem:margins_cons_remainder}
Under assumptions \ref{A:SemiPar_r} and \ref{A:alpha_max}, with $f_j(\bDelta)$ as defined in \eqref{eq:def_f}, it holds that \textcolor{red}{
\[
\max_{1 \le j \le p_n}  \sup_{ \bDelta \in \Theta_n^\Delta } (r_n C\alpha_{j,n})^{-1}|  \P_n f_j( r_n C \bDelta) | = O_p\left(\sqrt{\frac{d_n^{(2+2b)} \ln d_n}{n}} \right) .
\]}
\end{lemma}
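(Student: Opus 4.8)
The plan is to control $\P_n f_j(r_n C\bDelta)$ by a first-order Taylor expansion in the copula argument $\bu$. Writing $f_j(r_n C\bDelta)(\bX_i) = g_{j,\bDelta}(F_n(\bX_i)) - g_{j,\bDelta}(F(\bX_i))$ with $g_{j,\bDelta}(\bu) = \phi(\bu;\btheta^* + r_n C\bDelta)_j - \phi(\bu;\btheta^*)_j$, for each $i$ there is a point $\btu_i$ on the segment between $F(\bX_i)$ and $F_n(\bX_i)$ such that
\[
g_{j,\bDelta}(F_n(\bX_i)) - g_{j,\bDelta}(F(\bX_i)) = \sum_{l=1}^{d_n}\frac{\partial}{\partial u_l}g_{j,\bDelta}(\btu_i)\,(F_{nl}(X_{il}) - F_l(X_{il})).
\]
The crucial observation is that this segment is the same for every $j$ and $\bDelta$, so all subsequent bounds will be uniform over $j$ and $\bDelta$.

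Next I would exploit the triangular structure of the model: $\phi(\cdot)_j$ depends only on parameters of the same and earlier trees, i.e.\ on $\theta_1,\ldots,\theta_j$, so $g_{j,\bDelta}$ depends on $\bDelta$ only through $\Delta_1,\ldots,\Delta_j$. Applying \ref{A:SemiPar_r} with $\bDelta$ restricted to these coordinates yields $|\tfrac{\partial}{\partial u_l}g_{j,\bDelta}(\btu_i)| \le r_n C\,(\max_{1\le m\le j}\alpha_{m,n})\,\psi_l(\btu_i)$, and after dividing by $r_n C\alpha_{j,n}$ the prefactor $\max_{1\le m\le j}\alpha_{m,n}/\alpha_{j,n}$ is $O(1)$ by \ref{A:alpha_max}. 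A Cauchy--Schwarz step over $l$ then bounds $\sum_l\psi_l(\btu_i)\,|F_{nl}(X_{il})-F_l(X_{il})|$ by $\|\bpsi(\btu_i)\|_2\,R_i$, where $R_i := (\sum_l|F_{nl}(X_{il})-F_l(X_{il})|^2)^{1/2}$. On the event of probability tending to one on which all coordinatewise convex combinations of $F$ and $F_n$ lie in $\Gcal_n$ --- supplied by \cref{lem:emp_cdfs} --- I can replace $\|\bpsi(\btu_i)\|_2$ by $A_i := \sup_{G\in\Gcal_n}\|\bpsi(G(\bX_i))\|_2$, uniformly in $i,j,\bDelta$.

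It then remains to show $\tfrac{1}{n}\sumin A_i R_i = O_p(\sqrt{d_n^2/n})$. I would bound $\max_i R_i^2 \le \sum_{l=1}^{d_n}\|F_{nl}-F_l\|_\infty^2$, whose expectation is $O(d_n/n)$ since $\E[\|F_{nl}-F_l\|_\infty^2]=O(1/n)$ (the second moment of the Kolmogorov--Smirnov statistic, already established in the proof of \cref{lem:I2_margins_cons}), so $\max_i R_i = O_p(\sqrt{d_n/n})$. Separately, $\tfrac{1}{n}\sumin A_i = O_p(\sqrt{d_n})$ by \ref{A:SemiPar_r} and Markov's inequality for the nonnegative average. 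Multiplying, $\tfrac{1}{n}\sumin A_i R_i \le \max_i R_i\cdot\tfrac{1}{n}\sumin A_i = O_p(\sqrt{d_n^2/n})$, which together with the $O(1)$ normalization factor from the previous paragraph yields the claim.

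The main obstacle I anticipate is avoiding a spurious $\sqrt{\ln d_n}$ factor. The naive bound $R_i\le\sqrt{d_n}\,\max_l\|F_{nl}-F_l\|_\infty$ forces a maximum over the $d_n$ coordinates and hence an extra $\sqrt{\ln d_n}$; the fix is to retain the squared sum $\sum_l\|F_{nl}-F_l\|_\infty^2$, whose expectation is merely a sum of $d_n$ terms of order $1/n$ and carries no logarithm. A second delicate point is keeping every estimate uniform in $j$ and $\bDelta$: this succeeds because the Taylor segment is independent of $(j,\bDelta)$, and because the triangular structure combined with \ref{A:alpha_max} prevents the normalization $\max_{1\le m\le j}\alpha_{m,n}/\alpha_{j,n}$ from diverging even when $\alpha_n=\max_j\alpha_{j,n}$ is large.
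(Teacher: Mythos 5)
Your proposal is correct and follows essentially the same route as the paper's proof: a Taylor expansion in the copula argument, Cauchy--Schwarz over the $d_n$ coordinates, the bound $\|F_n(\bX_i)-F(\bX_i)\|_2 = O_p(\sqrt{d_n/n})$ via $\E[\|F_{nl}-F_l\|_\infty^2]=O(1/n)$, and assumptions \ref{A:SemiPar_r} and \ref{A:alpha_max} to control the $\bu$-gradient by $O_p(\sqrt{d_n})$ uniformly in $j$ and $\bDelta$. If anything, your handling of the average over $i$ (splitting into $\max_i R_i$ times $\tfrac1n\sum_i A_i$ and applying Markov to the latter) is slightly more explicit than the paper's pointwise-in-$i$ statement.
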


\begin{proof}
With some $\btU_i$ on the segment between $F_n(\bX_i)$ and $F(\bX_i)$, it holds that
\textcolor{red}{
\begin{align*}
 f_j( r_n C \bDelta ) & =  \nabla_{\bu} \lf[\phi(\btU_i; \btheta^* + r_n C \bDelta)_j - \phi(\btU_i; \btheta^*)_j \ri] (F_n(\bX_i) - F(\bX_i) )\\
 & \le  \lf\| \left( w(F_m(X_{im})) \frac{\partial}{\partial u_m} \lf[\phi(\btU_i; \btheta^* + r_n C \bDelta)_j - \phi(\btU_i; \btheta^*)_j \ri] \right)_{m=1,\ldots, d_n} \ri\|_2 \cdot \left\| \left( \frac{F_{nm}(X_{im}) - F_m(X_{im})}{w(F_m(X_{im})) } \right)_{m=1, \ldots, d_n}\right\|_2.
\end{align*}
By \ref{A:SemiPar_r} and \ref{A:alpha_max}, there exist non-negative, real-valued functions $\psi_m(\bu)$, $m = 1,\ldots, d_n$, such that for all $j = 1, \ldots, p_n$,
\[
 (r_n C\alpha_{j,n})^{-1}  \lf|  \frac{\partial}{\partial u_m} \lf[\phi(\btU_i; \btheta^* + r_n C \bDelta)_j - \phi(\btU_i; \btheta^*)_j \ri] \ri|
\le \frac{\max_{k \in I_{n,j} } \alpha_{k,n}}{\alpha_{j,n}} \psi_m(\btU_i) 
\]
and $\max_{1 \le j \le p_n} (\max_{k \in I_{n,j}  } \alpha_{k,n}/ \alpha_{j,n}) = O(1)$ by \ref{A:alpha_max}.
We therefore have
\begin{align*}
&  \max_{1 \le j \le p_n}  \sup_{ \bDelta \in \Theta_n^\Delta , | \Delta_j | = \alpha_{j,n}} (r_n C\alpha_{j,n})^{-1}    |  \P_n f_j( r_n C \bDelta) | \\
& \le \sqrt{d_n} \sup_{x \in \R, 1 \le m \le d_n  } \frac{| F_{nm}(x) - F_m(x)| }{w(F_m(x)) } \ \frac{1}{n} \sumin \lf\| \left( w(F_m(X_{im})) \psi_m(\btU_i)\right)_{m=1,\ldots, d_n} \ri\|_2 O(1) \\
& = O_p\left(\sqrt{\frac{d_n^{(1+2b)} \ln d_n}{n}} \right) O_p\left(\E\left[\sup_{G \in \Gcal_n} \lf\| \left( w(F_m(X_{m})) \psi_m(G(\bX))\right)_{m=1,\ldots, d_n} \ri\|_2 \right] \right) = O_p\left(\sqrt{\frac{d_n^{(2+2b)} \ln d_n}{n}} \right) 
\end{align*}
by \ref{A:SemiPar_r} and \cref{lem:emp_cdfs}, with $b$ as defined in \ref{A:SemiPar_I2}..
}
\end{proof}

\begin{lemma}
\label{lem2_new}
Under assumption \ref{A:SemiPar2}, for $I_2$ as defined in \eqref{eq:lem1_I2}, it holds that
$
\| I_2 \| =  O_p\lf( \sqrt{ d_n^{(2 + 4b)} p_n (\ln d_n)^2 /n^2 }\ri).
$
\end{lemma}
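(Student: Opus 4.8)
The plan is to follow the proof of \cref{lem:I2_margins_cons} almost verbatim, but to track the full Euclidean norm of $I_2$ rather than its maximum norm; this is exactly why assumption \ref{A:SemiPar2} sums the second derivatives over the component index $k$ whereas \ref{A:SemiPar_I2} only takes a supremum over $k$. I would start from the triangle inequality $\| I_2 \| \le \frac 1 n \sum_{i=1}^n \| I_{2,i} \|$ and abbreviate $\bv_i \coloneq F_n(\bX_i) - F(\bX_i) \in \R^{d_n}$ and $H_{i,k} \coloneq \nabla^2_{\bu} \phi(\btu_i; \btheta^*)_k$, whose $(m,l)$-entry is $\partial_{ml} \phi(\btu_i; \btheta^*)_k$. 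Each component of $I_{2,i}$ is the quadratic form $\bv_i^\top H_{i,k} \bv_i = \langle H_{i,k}, \bv_i \bv_i^\top \rangle_F$, so the Cauchy--Schwarz inequality for the Frobenius inner product (already used in \cref{lem:I2_margins_cons}) gives $| \bv_i^\top H_{i,k} \bv_i | \le \| H_{i,k} \|_F \, \| \bv_i \|^2$, using $\| \bv_i \bv_i^\top \|_F = \| \bv_i \|^2$.

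Squaring and summing over $k$ is the one genuinely new step:
\[
\| I_{2,i} \|^2 = \sum_{k=1}^{p_n} \big( \bv_i^\top H_{i,k} \bv_i \big)^2 \le \| \bv_i \|^4 \sum_{k=1}^{p_n} \| H_{i,k} \|_F^2 = \| \bv_i \|^4 \sum_{k=1}^{p_n} \sum_{m=1}^{d_n} \sum_{l=1}^{d_n} | \partial_{ml} \phi(\btu_i; \btheta^*)_k |^2 ,
\]
so that $\| I_{2,i} \| \le \| \bv_i \|^2 \big( \sum_{k,m,l} | \partial_{ml} \phi(\btu_i; \btheta^*)_k |^2 \big)^{1/2}$. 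As in \cref{lem:I2_margins_cons}, the intermediate points $\btu_i$ lie on the segment between $F_n(\bX_i)$ and $F(\bX_i)$, so \cref{lem:emp_cdfs} allows me, on an event of probability tending to one, to bound the double-derivative factor by its supremum over $G \in \Gcal_n$ evaluated at $\bX_i$, which is precisely the object controlled by \ref{A:SemiPar2}.

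Passing to expectations and applying Cauchy--Schwarz once more to the product $\| \bv_i \|^2 \times ( \cdots )^{1/2}$ then yields
\[
\E\big[ \| I_{2,i} \| \big] \le \sqrt{ \E\big[ \| \bv_i \|^4 \big] } \; \sqrt{ \E\Big[ \sup_{G \in \Gcal_n} \sum_{k=1}^{p_n} \sum_{m=1}^{d_n} \sum_{l=1}^{d_n} | \partial_{ml} \phi(G(\bX); \btheta^*)_k |^2 \Big] } .
\]
The second factor is $O(\sqrt{p_n})$ by \ref{A:SemiPar2}. For the first, the crude bound $\| \bv_i \|^2 \le \sum_{l=1}^{d_n} \| F_{nl} - F_l \|_\infty^2$ combined with the fourth-moment estimate $\E[ ( \sqrt{n} \| F_{nl} - F_l \|_\infty )^4 ] = O(1)$ (established in \cref{lem:I2_margins_cons} from the uniform entropy bound of the indicator class) gives $\E[ \| \bv_i \|^4 ] = O(d_n^2 / n^2)$. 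Multiplying, $\E[ \| I_{2,i} \| ] = O(d_n \sqrt{p_n} / n)$ uniformly in $i$, hence $\E[ \| I_2 \| ] \le \frac 1 n \sum_{i=1}^n \E[ \| I_{2,i} \| ] = O(d_n \sqrt{p_n} / n)$, and Markov's inequality delivers the claim $\| I_2 \| = O_p( d_n \sqrt{p_n} / n )$.

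I expect the only real obstacle to be the handling of the data-dependent intermediate points $\btu_i$: because each $\btu_i$ depends on the whole sample through $F_n$, the random variable $\| I_{2,i} \|$ is not a clean function of $\bX_i$ alone, and I must route the derivative factor through the supremum over $\Gcal_n$ supplied by \cref{lem:emp_cdfs}, keeping the high-probability event on which $F_n$ is uniformly close to $F$ in force until after the expectations have been bounded. Everything else is a direct strengthening of \cref{lem:I2_margins_cons}, the new content being only that collapsing $\sum_k ( \bv_i^\top H_{i,k} \bv_i )^2$ produces $\sum_k \| H_{i,k} \|_F^2$, which the summed assumption \ref{A:SemiPar2} bounds by $O(p_n)$ rather than the maximised assumption \ref{A:SemiPar_I2}.
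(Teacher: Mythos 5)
Your proposal is correct and follows essentially the same route as the paper's proof: both reduce $\E[\|I_{2,i}\|]$ via Frobenius--Cauchy--Schwarz to the product $\sqrt{\E[\sup_{G\in\Gcal_n}\sum_{k,m,l}|\partial_{ml}\phi(G(\bX);\btheta^*)_k|^2]}\cdot\sqrt{\E\left[\left(\sum_l \|F_{nl}-F_l\|_\infty^2\right)^2\right]} = O(\sqrt{p_n})\cdot O(d_n/n)$, the only difference being that the paper dualizes the Euclidean norm with a supremum over unit vectors $\bDelta$ before applying Cauchy--Schwarz over $(m,l)$, while you square and sum over $k$ directly --- a purely cosmetic reorganization. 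Your handling of the intermediate points $\btu_i$ through $\Gcal_n$ and \cref{lem:emp_cdfs} also matches the paper's treatment.
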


\begin{proof}
We slightly adapt \cref{lem:I2_margins_cons} to obtain tight bounds in $\| \cdot \|_2$ norm:
\textcolor{red}{
Using $\| \bx \| = \sup_{\| \bDelta \| = 1} \langle \bx, \bDelta \rangle$, we have, with $\Gcal_n$ as defined in \ref{A:SemiPar_I2} and matrices $A_G, B \in \R^{d_n \times d_n}$,
\begin{align*}
 \| I_2 \| & \le  \frac{1}{n} \sumin \sup_{\| \bDelta \| = 1, G \in \Gcal_n} \sum_{k = 1}^{p_n} \sum_{m = 1}^{d_n}  \sum_{l = 1}^{d_n}  \partial_{ml} \phi(G(\bX_i); \btheta^*)_k  (F_{nm}(X_{im}) - F_m(X_{im})) (F_{nl}(X_{il}) - F_l(X_{il})) \Delta_k \\
 & \le  \frac{1}{n} \sumin \sup_{\| \bDelta \| = 1, G \in \Gcal_n} \sum_{m = 1}^{d_n}  \sum_{l = 1}^{d_n}  \left\| \frac{F_{nm} - F_m }{w(F_m)}\right\|_{\infty}  \left\| \frac{F_{nl} - F_l }{w(F_l)}\right\|_{\infty} \sum_{k = 1}^{p_n}  |   \partial_{ml} \phi(G(\bX_i); \btheta^*)_k  w(F_m(X_{im})) w(F_l(X_{il})) |  \Delta_k \\
 & \le  \frac{1}{n} \sumin \sup_{\| \bDelta \| = 1, G \in \Gcal_n} \sum_{m = 1}^{d_n}  \sum_{l = 1}^{d_n}  \underbrace{\left\| \frac{F_{nm} - F_m }{w(F_m)}\right\|_{\infty}  \left\| \frac{F_{nl} - F_l }{w(F_l)}\right\|_{\infty} }_{b_{ml}} \underbrace{ \| \ba_{ml}(G, \bX_i) \|}_{a_{ml}(G, \bX_i)}  \| \Delta \| \\
 & \le \| B \|_F \, \frac{1}{n} \sumin \sup_{ G \in \Gcal_n} \| A_G(\bX_i) \| _F
\end{align*}
where $\ba_{ml}(G, \bx) = \lf( |   \partial_{ml} \phi(G(\bx); \btheta^*)_k  w(F_m(x_m)) w(F_l(x_l) |  \ri)_{k=1, \ldots, p_n} \in \R^{p_n}$.
Now \ref{A:SemiPar2} ($\E[\sup_{ G \in \Gcal_n} \| A_G(\bX) \| _F] = \sqrt{p_n}$) and \cref{lem:I2_margins_cons} ($\| B \|_F  = O_p(\sqrt{ d_n^{(2 + 4b)} (\ln d_n)^2 /n^2 })$) imply the claim.
}
\end{proof}
 \begin{lemma}
\label{lem:emp_cdfs}
For any random variable $\bX \in \R^{d_n}$ with continuous c.d.f.s $F_m(x)$ and empirical c.d.f.s $F_{nm}(x), m = 1, \ldots, d_n$, $\gamma \in (0,1)$ and $b > \gamma$, with $w(s) = s^\gamma (1-s)^\gamma$, it holds that
\[
\sup_{x \in \R, 1 \le m \le d_n} \frac{ |F_{nm}(x) - F_m(x) |}{w(F_m(x))} = O_p \lf( d_n^b \sqrt{\frac{\ln d_n}{ n}} \ri).
\]
\end{lemma}

\begin{proof}
Define the function class
$
\Gcal = \lf\{ f_{m, t} (\bx) = \ind ( x_m \le  t) / w(F_m ( x_m)) : t \in \R, m = 1, \ldots, d_n\ri\} .
$
Now
\[
\sup_{x \in \R, 1 \le m \le d_n} \frac{ |F_{nm}(x) - F_m(x) |}{w(F_m(x))} = \sup_{f \in \Gcal} | (\P_n - P) f |.
\]
We need to derive the bracketing number $\Ncal_{[ \, ]}(\eps, \Gcal, L_2(P))$ \citep[see][Chapter 2.1.1 for a definition]{vdV} of this function class.
Denote $\Gcal_m = \{ f_t (\bx) = \ind ( x_m \le  t) / w(F_m ( x_m)) : t \in \R \} $.
It holds $\Ncal_{[ \, ]}(\eps, \Gcal_m, L_2(P)) = O(\eps^{-k})$ for some $k < \infty$.
This follows from \citet[][proof of Lemma A.1]{Nagler22}, since $\| \cdot \|_\beta = \| \cdot \|_{L_2(P)}$ for independent random variables.
Since $\Gcal = \bigcup_m \Gcal_m$, a union of $\eps$-bracketings of $\Gcal_m, m = 1, \ldots, d_n$, is an $\eps$-bracketing of $\Fcal$ and therefore $\Ncal_{[ \, ]}(\eps, \Gcal, L_2(P)) = O(d_n \eps^{-k})$ for some $0 < k < \infty$.

Now we derive $\| G \|_{L_2(P)} = O(d_n^{b})$ with some $b > \gamma$ for the envelope $G (\bx ) = \max_{1 \le m \le d_n} 1/w(F_m(x_m))$ of $\Gcal$.
With $U \sim U[0,1]$ and any $a$ such that $2a < 1/\gamma$, it holds that
\[
\Pr \lf(G(\bX)^2 > t \ri) \le d_n \Pr \lf(\frac{1}{w(U)^2} > t \ri) \le d_n \frac{\E [w(U)^{-2a}]}{t^a} \lesssim d_n t^{-a},
\]
since $F_m(X_m) \sim U[0,1]$ and $\E [(U^{-c} (1-U)^{-c}] < \infty$ for $c < 1$.
It holds that
\[
\E\lf[ G(\bX)^2 \ri]  = \eta + \int_\eta^\infty \Pr \lf( G(\bX)^2  > t \ri) d t \lesssim \eta + \int_\eta^\infty d_n t^{-a} d t \lesssim  \eta + d_n \eta^{1 - a} . 
\]
With $\eta = d_n^{1/a}$, we obtain
\[
 \E\lf[ G(\bX)^2 \ri]  \lesssim  d_n^{1/a }+ d_n \cdot d_n^{(1-a)/a} =  d_n^{1/a } +  d_n^{1/a} = O\lf(d_n^{1/a} \ri)
 \]
 and therefore $\| G \|_{L_2(P)} = O(d_n^b)$ for any with $b = 1/(2a) > \gamma$.
For the bracketing integral \citep[][Chapter 2.14.2]{vdV}, we obtain with $\| G \|_{L_2(P)} \ge 1$
\[
J_{[ \, ]}(1, \Gcal \mid G, L_2(P)) \le \int_0^1 \sqrt{1 +  \ln \Ncal_{[ \, ]}(\eps ,  \Gcal, L_2(P))  } d \eps = O(\sqrt{\ln d_n}).
\]
Now, Theorem 2.14.16 in \citet[][Chapter 2.14.2]{vdV} yields 
$
\sup_{f \in \Gcal} | (\P_n - P) f | = O_p(\sqrt{ \ln d_n} \, d_n^b/\sqrt{n}) .
$
\end{proof}
 {\color{red}
\begin{lemma} \label[lemma]{lem:truncation}
  Let $\Fcal_n$ be classes of real-valued functions from $\Xcal$ to $\R$, $F_n$ be any function with $\sup_{f \in \Fcal_n}|f| \le F_n$, and $B_n$ be any sequence with $ P\ind_{F_n > B_n} = o(1/n)$. It holds that $$\sup\nolimits_{f \in \Fcal_n}|(\P_n - P)f| \le  \sup\nolimits_{f \in \Fcal_n}|(\P_n - P)f \ind_{F_n \le B_n}| + o_p\left( \sqrt{n^{-1} \sup\nolimits_{f \in \Fcal_n} P f^2}\right).$$
\end{lemma}
\begin{proof}
  It holds that
    \begin{align*}
      \sup_{f \in \Fcal_n}|(\P_n - P)f| 
      &\le \sup_{f \in \Fcal_n}|(\P_n - P)f \ind_{F_n \le B_n}| + \sup_{f \in \Fcal_n}|(\P_n - P) f\ind_{F_n > B_n}|  \le \sup_{f \in \Fcal_n}|(\P_n - P)f \ind_{F_n \le B_n}|  +|  \P_n F_n\ind_{F_n > B_n}|  +  \sup_{f \in \Fcal_n}|P f\ind_{F_n > B_n}|.
  \end{align*}
  For the last term, the Cauchy Schwarz inequality gives
    \begin{align*}
    \sup\nolimits_{f \in \Fcal_n}|P f\ind_{F_n > B_n}| \le \sqrt{\sup\nolimits_{ f \in \Fcal_n} P f^2} \sqrt{P \ind_{F_n > B_n}} = o\lf( \sqrt{n^{-1} \sup\nolimits_{f \in \Fcal_n} P f^2}\ri).
  \end{align*}
  Since $ P\ind_{F_n > B_n} = o(1/n)$, it holds that $\Pr(\exists i\colon F_n(\bX_i) > B_n) \le n P\ind_{F_n > B_n} = o(1),$ so $\P_n F_n\ind_{F_n > B_n} = 0$ with probability tending to 1, which implies the claim.
\end{proof}}

{\color{red}
\begin{lemma}\label[lemma]{lem:lemma10}
Under assumption \ref{A:Asymp}, it holds that $
 \sqrt{n} (\P_n - P) A_n [\phi(\hbtheta_U ) - \phi(\btheta^*)] =o_p(1).
$
\end{lemma}
\begin{proof}
Denote $\tilde r_n =  \alpha_n\sqrt{p_n \ln p_n / n}$.
From \cref{theorem_cons_new2}, it follows that $ \| \hbtheta_U - \btheta^* \| = O_p( r_n )$.
  We show that for each row $\ba_n$ from $A_n \in \R^{q \times p_n} $ and $C < \infty$, it holds that
  $
    \sup_{\| \bDelta \|\le \tilde r_n C} |(\P_n - P) \ba_n^\top [\phi(\btheta^* +  \bDelta) - \phi(\btheta^*)]| = o_p(1 / \sqrt{n}).
  $
  Since $A_n [\phi(\btheta^* +  \bDelta) - \phi(\btheta^*)]$ is a finite dimensional vector, this implies the claim.
  Let $\ba_n$ be some row of $A_n$.
  We have
  $$
    \sup_{\| \bDelta \|\le \tilde  r_n C} | (\P_n - P) \ba_n^\top[\phi(\btheta^* +  \bDelta) - \phi(\btheta^*)] |
    = \sup_{f_{\bDelta} \in \Fcal_n}  | (\P_n - P) f_{\bDelta}|
  $$
  with $\Fcal_n \coloneqq \{f_{\bDelta}(\bu) = \ba_n^\top [\phi(\bu; \btheta^* +  \bDelta) - \phi(\bu; \btheta^*)] : \| \bDelta \| \le \tilde r_n C \}$.
  Define $F_n(\bu) = \sup_{\| \bDelta \|, \| \bDelta' \| \le \tilde r_n C} | f_{\bDelta}(\bu) - f_{\bDelta'}(\bu) | / \| \bDelta - \bDelta' \|$, which is an envelope for the functions in $(\tilde r_n C)^{-1} \Fcal_n$, i.e., $\sup_{f_{\bDelta} \in \Fcal_n}  (\tilde r_n C)^{-1} | f_{\bDelta} (\bu) | \le F_n(\bu)$, since $f_{\bnull} = 0$.
  The second conditon in \ref{A:Asymp} implies that we can apply \cref{lem:truncation}, which gives
  \[
 (\tilde r_n C)^{-1}  \sup_{f_{\bDelta} \in \Fcal_n}  | (\P_n - P) f_{\bDelta}|
 \le (\tilde r_n C)^{-1}  \sup_{f_{\bDelta} \in \Fcal_n}  | (\P_n - P) f_{\bDelta} \ind_{F_n \le D_n}| + o_p(M_n/\sqrt{n})
  \]
  with $D_n$ as defined in \ref{A:Asymp} and $M^2_n \coloneqq  \sup_{\| \bDelta \|, \| \bDelta' \| } \E[| f_{\bDelta}(\bU) - f_{\bDelta'}(\bU)|^2]/  \| \bDelta - \bDelta' \|^2$, so $ \sup_{f_{\bDelta} \in \Fcal_n}  | (\P_n - P) f_{\bDelta}|  \le  \sup_{f_{\bDelta} \in \Fcal_n}  | (\P_n - P) f_{\bDelta} \ind_{F_n \le D_n}| + o_p(M_n \tilde r_n  / \sqrt{n})$.
   
  To bound the first term, we need covering numbers of $\Fcal_n$. It holds that $\ln N(\eps, \Fcal_n, L_2(P)) \le p_n \ln (3 M_n \tilde r_n C / \eps)$ and $N(\eps, \Fcal_n, \| \cdot \|_{\infty}) \le  p_n \ln (3 D_n \tilde r_n C / \eps)$: 
  Let $\bDelta_1, \ldots, \bDelta_N$ be the centers of an $\eta$-covering of $\{ \bDelta \in \R^{p_n}: \| \bDelta \| \le \tilde r_n C \}$, which we can find with $N = (3 \tilde r_n C / \eta)^{p_n}$. 
  Then, by the definitions of $M_n$ and $D_n$, the functions $f_{\bDelta_1}, \ldots, f_{\bDelta_N}$ are the centers of an $M_n \eta$-covering of $\Fcal_n$ in $L_2(P)$ and of a $D_n \eta$-covering in $\| \cdot \|_{\infty}$, respectively. 
  Choosing $\eta = \eps/M_n$ and $\eta = \eps/D_n$ gives the above bounds.
  
  Now, similar to the proof of \cref{lem:emp_proc}, \citep[][Theorem 2.14.21]{vdV} yields
\[
\E \lf[ \sup_{f_{\bDelta} \in \Fcal_n}  | (\P_n - P) f_{\bDelta} \ind_{F_n \le D_n}| \ri] = O\left( \frac{M_n \tilde r_n C \sqrt{p_n }}{\sqrt{n}} + \frac{D_n \tilde r_n C p_n }{n}\right) = o(1/\sqrt{n}),
\]
  since $M_n = o(1/( \tilde r_n \sqrt{ p_n}))$ and $D_n = o(\sqrt{n}/(\tilde r_n \, p_n))$ by \ref{A:Asymp}.
  This implies the claim.
\end{proof}}
 
\newpage

\begin{center}
    {\Large Supplement to ``Properties of stepwise parameter estimation in high-dimensional vine copulas''} 
\end{center}

\section{Numerical Validation of Assumptions}
\label{sec:ValAssump}

In this section, we numerically investigate some of the assumptions underlying the theorems presented above for Gaussian vine copulas.
While Gaussian vines are of limited use in practice, their simplicity allows computations in high dimensions, as many required quantities can easily be computed analytically.
Despite this limitation, the findings provide valuable insights into the conditions under which the theoretical results from the paper are valid.

\subsection{Validation of Assumption (A3)}
\label{sec:ValAssumpA3}

The key assumption to ensure identifiability is assumption (A3).
A central question is how to choose the sequences $\alpha_{j,n}$ such that the expression in (A3) remains negative and bounded away from 0.
If $\max_{1 \le j \le p_n} \alpha_{j,n} = O(1)$, the optimal rate of convergence in $\| \cdot \|_{\infty}$ norm, $\sqrt{\ln p_n / n}$, is attained.
As a starting point, we set all $\alpha_{j,n} = 1$. 
If the numerical approximation produces positive values, we try increasing functions for $\alpha_{j,n}$.

\textcolor{red}{For a given $\bDelta$, it is possible to calculate $\E[\phi(\bU; \btheta^* + \bDelta)]$ using recursively computed expectations $\E_{\btheta^*}[X_{a|D}(\btheta^* + \bDelta) \, X_{b|D}(\btheta^* + \bDelta)]$, where $x_{a|D} = \phi^{-1}(u_{a|D})$.
To approximate the supremum in (A3), we simulate several vectors $\bDelta_k$ and compute
\begin{equation}
\label{eq:A3Emp}
\max_{1 \le j \le p}\ \max_{k =1, \ldots K} \frac{\sign(\Delta_j)}{| \Delta_j |} \E[ \phi(\bU; \btheta^* + \bDelta_k)_j ]
\end{equation}
for Gaussian D- and C-vines with various models for the true parameter.}

For simplicity, all true parameters (the partial correlations in a Gaussian vine) in a given tree are set to the same value.
Let $\theta_t^*$ denote the true parameter in the $t$-th tree.
As discussed in the article, estimation is facilitated, or perhaps only possible,  if the pair copulas in higher trees converge to independence copulas.
Accordingly, we select functions for $\theta_t^*$ that converge to 0 at different rates.

The dimension $d$ ranges from $5$ to $50$. 
To approximate the supremum in (A3), we draw \textcolor{red}{$K=1000$} vectors $\bDelta_k$ with entries $\Delta_{k,j} = \pm \eps \alpha_ j$, where $\alpha_j = \alpha(t(j))$, and $t(j)$ denotes the tree of the $j$-th parameter.
The sign of $\Delta_{k,j}$ is drawn uniformly from $\{ -1, 1\}$.
The constant $\eps$ is $0.005$ and $\alpha(t) = 1$ for all models except for the D-vine with $\theta_t^* = 0.5/\sqrt{t+1}$.

The results are shown in \cref{fig:estimA3}.
For $\theta_t^* = 0$, $\theta_t^* = 0.5^t$ and $\theta_t^* = 1/(t+1)$, and the C-vine with $\theta_t^* = 0.5/(t+1)$, we obtain negative values when all $\alpha_{j,n}$ are set to 1.
Note that for some settings, the obtained values do not depend on $d$, as the maximum in \eqref{eq:A3Emp} is attained for parameters in an early tree.
In contrast, for the D-vine with $\theta_t^* = 0.5/\sqrt{t+1}$, the same setup produces positive values that are too large to show here.
While the increasing but bounded sequence $\alpha (t) =  \sum_{s=1}^t s^{-1.1}$ also yields positive values (not shown here), negative estimates are obtained using $\alpha(t) = t$ with $\eps = 10^{-7}$.
Using a diverging sequence for $\alpha_{j,n}$ results in a slower rate of convergence in Theorem 1, here $\| \hbtheta - \btheta^* \|_{\infty} = O_p(\sqrt{d_n^2 \ln d_n / n})$.

Although these results are obtained in finite dimensions and for a finite number of sampled $\btheta_k$ values and should therefore be interpreted with caution, they provide a useful impression of the relationship between (A3) and the rate of convergence in Theorem 1.

\begin{figure}
  \centering
  \includegraphics[width = 0.85\textwidth]{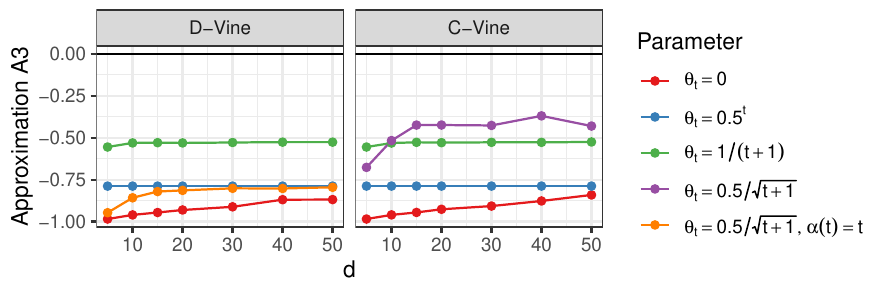}
  \caption{Approximated values for the validation of (A3) for Gaussian vines. The assumption is satisfied if the values are negative and bounded away from 0.
  The supremum over $\btheta$ is approximated by taking the maximum over  $K=1000$ values of $\btheta$, the expectation $\E[\phi(\bU; \btheta)]$ is computed analytically.  
$\alpha_{j,n} = 1$ unless otherwise stated.
  }\label{fig:estimA3}
  \vspace{0.5cm}
\includegraphics[width = 0.85\textwidth]{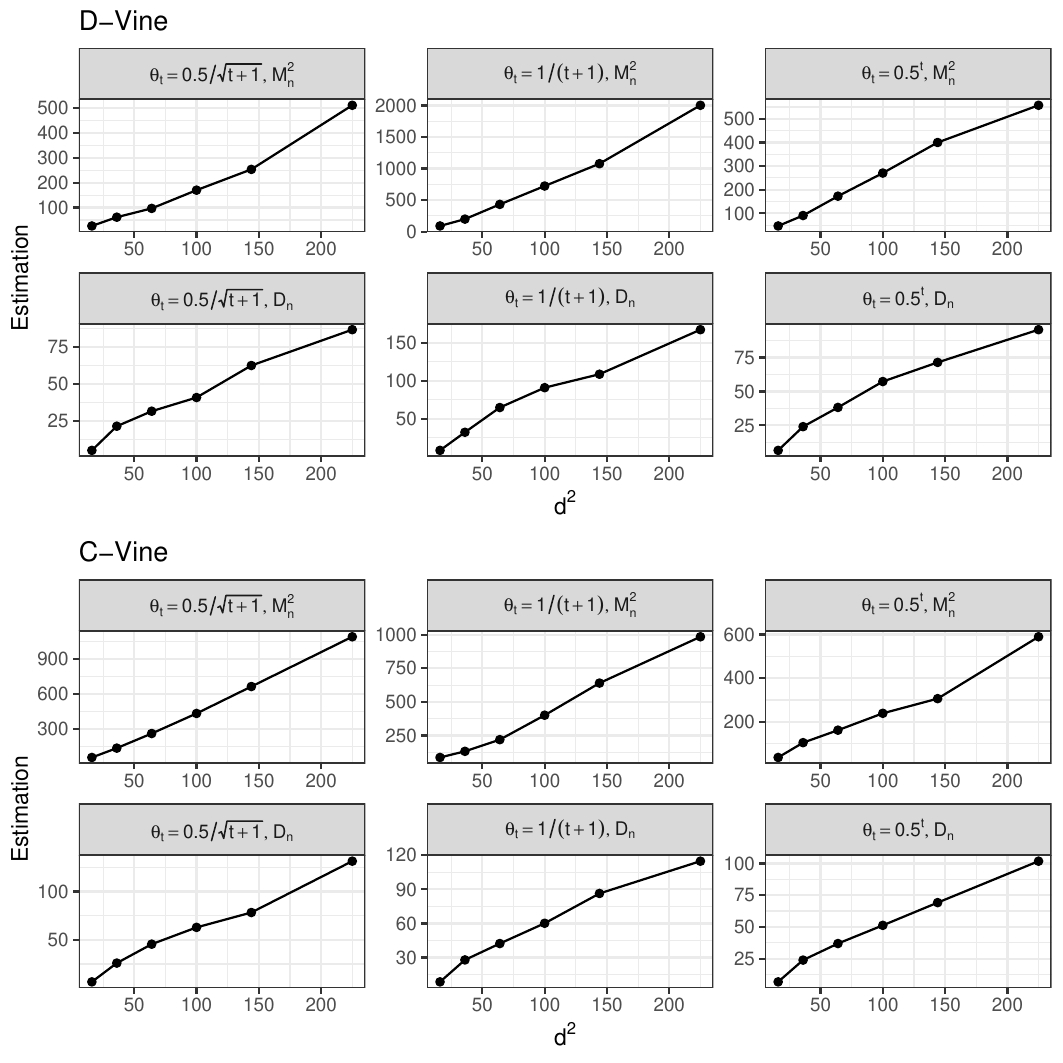}
  \caption{Estimated values for $M_n^2$ and $D_n$ in (A4).
  The supremum over $\btheta$ is approximated by taking the maximum over  $K=30$ values of $\btheta$.  
  }\label{fig:estimM}
\end{figure}

\subsection{Estimation of $M_n$ and $D_n$ from Assumption (A4)}

By estimating the quantities $M_n$ and $D_n$, we can obtain a rough assessment of the restrictions on $p_n$ imposed by assumption (A4).
For Gaussian vines, it is relatively easy to analytically compute derivatives of $\phi(\bu; \btheta)$ w.r.t.~$\btheta$.

The supremum is again approximated by drawing $K=30$ vectors $\bDelta_k$ as described in the previous section.
The constant $\eps$ is again set to $0.005$ and $\alpha(t) = 1$, with the exceptions for the D-vine with $\theta^*_t = 0.5/\sqrt{t+1}$, where $\eps = 10^{-7}$ and $\alpha(t)  = t$.
Due to the increased computational complexity compared to the previous section, the maximal dimension is limited to $d = 15$. 
To approximate the expectation in the definition of $M_n^2$, we draw $200 \log(d)$ samples.
For the estimation of $D_n$, the $1/n$ term in A4 must be replaced by some value depending on the number of parameters $p$.
Based on the results for $M_n$, we compute the $(1-15 / p^2)$-quantile since $p_n^2 \sim n$, using the same $200 \log(d)$ samples.
The results, shown in \cref{fig:estimM}, suggest that $M_n = O(d_n)$ and $D_n = O(d^2_n)$ for all considered models, indicating that the condition $p_n^2/n \to 0$ should be sufficient for (A4) to hold.

\section{$J(\theta)$ for Gaussian C-vines}

The following proposition provides a condition necessary to ensure that all eigenvalues of $J(\btheta^*) + J(\btheta^*)^\top$ are negative for a Gaussian C-vine.
For Gaussian vines, the parameters coincide with the partial correlations $\rho_{a,b;D}$ \citep{Kurowicka2003}, so we will use partial correlations instead of $\theta_{a,b;D}$ for convenience.

Both our theoretical results and the simulation study indicate that negative definiteness of $J(\btheta^*) + J(\btheta^*)^\top$ is not required for consistent estimation of $\btheta^*$.
This proposition thus shows that the results in \citet{Gauss24} are not suitable for vine copulas, as they rely on the eigenvalues of $J(\btheta^*) + J(\btheta^*)^\top$ being negative and bounded away from 0.

\begin{proposition}
\label{prop:CVine}
Consider a $d_n$-dimensional Gaussian C-vine with root nodes $1$ and $(1,2)$ in the first and second tree.
Assume that the true parameters in the second tree satisfy $|\rho^*_{23;1}| = |\rho^*_{24;1}| = \ldots = |\rho^*_{2d;1}| \neq 0$.
For the condition
\[
\limsup_{d \to \infty} \lambda_{\max} \lf(J(\brho^*) + J(\brho^*)^\top \ri) \le -c < 0
\]
to hold, the true parameters $\rho^*_{12}$ or $\rho^*_{2i;1}$ must decay with $d_n$.
Specifically, it is necessary that
\[
 \frac{\rho^*_{12} \rho^*_{2i;1}}{(1 - \rho^{*2}_{2i;1})(1 - \rho^{*2}_{12})} = O\lf(\frac{1}{ \sqrt{d_n} }\ri).
\]
\end{proposition}
For instance, this condition is satisfied if $\rho^*_{12} = O(d_n^{-c})$ and $\rho^*_{2i;1} = O(d_n^{-c})$ with $c \ge 0.25$, or if $\rho^*_{12} = O(1)$ and $\rho^*_{2i;1} = O(d_n^{-c})$ with $c \ge 0.5$.

\textcolor{red}{\section{Assumption (A3) from a Theoretical Perspective}} \label{sec:A3theory}

The key assumption for consistency of the estimator $\hbtheta_U$ is (A3), which states that there exists a $c > 0$ such that 
\begin{equation}\label{eq:A3}
\limsup_{n \to \infty}   \max_{1 \le j \le p_n} \sup_{ \bDelta \in \Theta_n^\Delta , | \Delta_j | = \alpha_{j,n}} (r_n C_n \alpha_{j,n})^{-1} \sign(\Delta_j) \, \E \lf[\phi(\bU; \btheta^* + r_n C_n \bDelta)_j \ri]  \le -c  ,
\end{equation}
where $\alpha_{j, n}, j = 1, \ldots, p_n$ are positive sequences that are bounded away from zero and $\Theta_n^\Delta \coloneqq \{ \bDelta: |\Delta_j | \le \alpha_{j,n}  \forall j = 1, \ldots, p_n\}$.
This section discusses this assumption from an analytical point of view.

Since $\E[\phi( \bU; \btheta^*)] = \bnull$, a Taylor expansion gives, for any $j = 1, \ldots, p_n$,
\[
\E \lf[\phi(\bU; \btheta^* + r_n C_n \bDelta)_j \ri]  
 = r_n C_n \bDelta^T \E\lf[\nabla_{\btheta} \phi(\bU; \bttheta)_j \ri]
\]
with some $\bttheta$ on the segment between $\btheta^*$ and $ \btheta^* + r_n C_n \bDelta$.
We can therefore write
\[
 (r_n C_n \alpha_{j,n})^{-1} \sign(\Delta_j) \, \E \lf[\phi(\bU; \btheta^* + r_n C_n \bDelta)_j \ri] 
 = \frac{\sign(\Delta_j)}{\alpha_{j,n}} \sum_{k=1}^{p_n} \Delta_k \E\left[ \frac{\partial}{\partial \theta_k} \phi(\bU; \bttheta)_j \right].
\]
Note that $\sign(\Delta_j) \Delta_j = | \Delta_j|$ and $ | \Delta_j| =\alpha_{j,n}$.

\vspace{5pt}

\myparagraph{Independent estimation problems}

If the estimating equation $\phi(\bU; \btheta)$ consisted of independent estimation problems, only the expectation $\E[ \frac{\partial}{\partial \theta_j} \phi(\bU; \bttheta)_j ]$ would be non-zero.
Then, if the entries of $\phi(\bu; \btheta)$ are the derivatives of functions to be maximized, \eqref{eq:A3} reduces to a standard second-order condition of non-zero, negative curvature around $\btheta^*$ to ensure identifiability.

\myparagraph{Vine Copulas}
In a vine where each pair copula has a single parameter, all partial derivatives with $k > j$ are 0 due to the sequential estimation.
The above sum simplifies to 
\begin{equation}
\label{eq:A3_sum}
\frac{\sign(\Delta_j)}{\alpha_{j,n}} \sum_{k=1}^{j} \Delta_k \E\left[ \frac{\partial}{\partial \theta_k} \phi(\bU; \bttheta)_j \right] = \E\left[ \frac{\partial}{\partial \theta_j} \phi(\bU; \bttheta)_j \right] + \frac{\sign(\Delta_j)}{\alpha_{j,n}} \sum_{k=1}^{j - 1} \Delta_k \E\left[ \frac{\partial}{\partial \theta_k} \phi(\bU; \bttheta)_j \right] . 
\end{equation}
The first term is the second derivative of the log-likelihood of the $j$-th pair copula w.r.t.~$\theta_j$.
In a small neighborhood around the true $\btheta^*$, it is reasonable to assume that this expected curvature is negative and bounded away from zero. 
For example, in a Gaussian vine evaluated at the true $\brho^*$, we have $ \E[ \frac{\partial}{\partial \rho_j} \phi(\bU; \brho^*)_j ] = -(1+(\rho_j^*)^2)/(1-(\rho_j^*)^2)^2$ (see the proof of \cref{lem:CVineGauss}),\footnote{Note that for computing this expectation, it is essential to evaluate at the true parameter, i.e., to compute the $u_{a|D}$ recursively using the true parameters. This ensures that $\E[X_{a|D}^2] = 1$ and $\E[X_{a|D} X_{b|D}] = \rho^*$, where $x_{a|D} = \Phi^{-1}(u_{a|D})$.} which converges to $-1$ for $\rho_j^* \to 0$.
Each of the remaining $j-1$ terms, however, could be positive and therefore requires a deeper analysis.
A bound for the worst-case scenario is
\begin{equation}
\label{eq:A3_bound}
\frac{\sign(\Delta_j)}{\alpha_{j,n}} \sum_{k=1}^{j - 1} \Delta_k \E\left[ \frac{\partial}{\partial \theta_k} \phi(\bU; \bttheta)_j \right] \le \sum_{k=1}^{j - 1} \frac{\alpha_{k,n}}{\alpha_{j,n}} \left| \E\left[ \frac{\partial}{\partial \theta_k} \phi(\bU; \bttheta)_j \right] \right|.
\end{equation}

\myparagraph{Finite-dimensional Vines}
In a finite-dimensional vine, as long as $\E[ \frac{\partial}{\partial \theta_j} \phi(\bU; \bttheta)_j ]$ is negative for each $j = 1, \ldots, p$, we can always choose an increasing sequence $\alpha_j, j = 1, \ldots p$ such that the fractions $\alpha_k/\alpha_j$ are small enough to ensure the first term in \eqref{eq:A3_sum} dominates the remaining $j-1$ terms.
Since there are only finitely many $\alpha_j$, we have $\alpha_j = O(1)$, leading to the best possible rate of convergence in Theorem 1.

\myparagraph{Truncated Vines}
In a truncated vine with $d_n \to \infty$ and a fixed truncation level $T$, the sum in \eqref{eq:A3_bound} contains at most $O(T^2)$ (i.e., finitely many) non-zero terms for each $j = 1, \ldots, p_n$.
Again, by choosing increasing sequences $\alpha_{j,n}$, we can ensure that the first term dominates in \eqref{eq:A3_sum}.
Since each $\alpha_{j,n}$ only has to ``mitigate'' the contribution of finitely many $ | \E[ \frac{\partial}{\partial \theta_k} \phi(\bU; \bttheta)_j ] |$, we again obtain $\alpha_{j,n} = O(1)$ ( as long as $\max_{j,k}  | \E[ \frac{\partial}{\partial \theta_k} \phi(\bU; \bttheta)_j ] | = O(1)$), see also Proposition 1 in the manuscript.

\myparagraph{General Vines}
While the above discussed special cases (independent estimation problems, finite-dimensional and truncated vines) provide sanity checks for the plausibility of (A3), they do not provide enough insight into the untruncated vines with $d_n \to \infty$.
We have already seen that the sequences $\alpha_{j,n}$ play a pivotal role in (A3):
Choosing sequences that increase in $j$, it is easier to satisfy (A3).
However, this might come at the cost of a slower rate of convergence in Theorem 1, if $\alpha_n = \max_{1 \le j \le p_n} \alpha_{j,n}$ diverges.
$\frac{\partial}{\partial \theta_k} \phi(\bu; \bttheta)_j $ captures the dependence of the estimation of $\theta_j$ on $\theta_k$. 
If this effect is too strong, estimating $\theta_j$ becomes more difficult and choosing $\alpha_{j,n}$ such that $\alpha_n \to \infty$ might be necessary to satisfy (A3). 
For which models is this the case and in which settings can we choose bounded sequences $\alpha_{j,n}$ or even $\alpha_{j,n} = 1$ for all $j$ and $n$?
To obtain more insights, we need to take a closer look into the derivatives $\frac{\partial}{\partial \theta_k} \phi(\bu; \bttheta)_j$.

The $j$-th entry of $\phi(\bu; \btheta)$ contains the derivative of the log-likelihood of a parameter $\theta_{a,b; D}$, which is estimated using conditional variables $u_{a|D}, u_{b|D}$, both depending on $\btheta$.
Denote the derivative of the log-likelihood by $s$, so
\[
\phi(\bu; \btheta)_j = s(u_{a|D}(\btheta), u_{b|D}(\btheta); \theta_{a,b;D}).
\]
Both $u_{a|D}(\btheta)$ and $u_{b|D}(\btheta)$ are calculated recursively, with $u_{a|D}(\btheta) = h(u_{a|D\setminus\{ c \}}(\btheta), u_{c|D\setminus\{ c \}}(\btheta); \theta_{ac;D\setminus\{ c \}})$ etc.~for some $c \in D$.
Now assume that $\theta_k$ appears exactly once in this recursion.
We obtain the derivative of $\phi(\bu; \btheta)_j$ w.r.t.~$\theta_k$ using the chain rule and denoting
\[
h_i(u_1, u_2; \theta ) = \frac{\partial h(u_1, u_2; \theta )}{\partial u_i}, i = 1, 2, \quad h'(u_1, u_2; \theta ) = \frac{\partial h(u_1, u_2; \theta )}{\partial \theta},\quad s_i(u_1, u_2; \theta) =  \frac{\partial s(u_1, u_2; \theta )}{\partial u_i}, i = 1, 2,
\]
and write $h_{1|2}$ and $s_{1|2}$ for either $h_1$ or $h_2$ and $s_1, s_2$ respectively.
The derivative of $\phi(\bu; \btheta)_j$ w.r.t.~$\theta_k$ is then given by
\begin{equation}
\label{eq:deriv_product}
\frac{\partial \phi(\bu; \bttheta)_j}{\partial \theta_k} = s_{1|2}(u_{a|D}(\bttheta), u_{b|D}(\bttheta); \tilde \theta_{a,b;D}) \cdot h_{1|2}(\ldots) \cdots h_{1|2}(\ldots) \cdot h'(\ldots; \tilde \theta_k),
\end{equation}
For each of the $h_{1|2}(\ldots)$, the vine structure encodes whether to use $h_1$ or $h_2$ as well as their arguments. 
This information is typically encoded in a triangular matrix \citep[see for example][]{CzadoNagler}.
If $\theta_k$ appears several times in the recursions in $u_{a|D}(\btheta)$ or $u_{b|D}(\btheta)$, the multivariable chain rule implies that $\frac{\partial}{\partial \theta_k} \phi(\bu; \bttheta)_j$ is the sum of the derivatives w.r.t.~the ``appearances'' of $\theta_k$.
How do these derivatives consisting of (sums of) products behave? 
We turn to Gaussian vines to obtain more concrete examples.

\myparagraph{Gaussian vines} 
It is more convenient to parameterize a Gaussian vine using the transformed variables $x_{a|D} = \Phi^{-1}(u_{a|D})$.
This yields
\[
h_1(x_1, x_2; \rho) = \frac{1}{\sqrt{1-\rho^2}}, \quad h_2(x_1, x_2; \rho) = - \frac{\rho}{\sqrt{1-\rho^2}}, \quad
h'(x_1, x_2; \rho) = \frac{\rho x_1 - x_2}{(1-\rho^2)^{3/2}}, \quad s_1(x_1, x_2;\rho) = \frac{(1 + \rho^2) x_2 - 2 \rho x_1}{(1- \rho^2)^2},
\]
and $s_2(x_1, x_2; \rho) = s_1(x_2, x_1; \rho)$.
$h_1$ and $h_2$ do not depend on $x_1, x_2$, which makes the computation of expectations much easier than in the general case. 
Since $h_2(0) = 0$, we have $\frac{\partial}{\partial \rho_k} \phi(\bx; \bnull)_j = 0$ whenever $h_2$ appears at least once in the product \eqref{eq:deriv_product}.
In contrast, if many partial correlations are far away from 0, derivatives where $h_1$ occurs multiple times may diverge for $d \to \infty$.

If we evaluate $\frac{\partial}{\partial \rho_k} \phi(\bx; \brho)_j$ at the true parameter $\brho^*$, we can obtain analytical results for certain vine structures.
Consider a $d$-dimensional D-vine with parameters $\rho_{12,}, \rho_{23}, \ldots, \rho_{d-1 ,d}$ in the first tree and consider the derivatives of the last estimating equation, corresponding to the parameter $\rho_{1, d; 2, 3, \ldots, d-1}$.
Only derivatives w.r.t.~parameters corresponding to leaf nodes of the trees are non-zero (i.e., $\rho_{12}, \rho_{13;2}, \rho_{14;23}, \ldots, \rho_{1 (d-1); 2, \ldots, d-2}$ and $\rho_{d-1, d}, \rho_{d-2, d;d - 1}, \ldots$), because otherwise, all required expectations of products $\E[X_{1|2, \ldots, d - 1} X_{a|D}], \E[X_{d|2, \ldots, d - 1} X_{a|D}]$ are 0 since $a \in \{ 2, \ldots, d - 1\}$.
For example, the expectation of the derivative w.r.t.~$\rho_{24;3}$ involves the expectations $\E[X_{1|2, \ldots, d - 1} X_{2|3}], \allowbreak\E[X_{1|2, \ldots, d - 1} X_{4|3}], \allowbreak\E[X_{d|2, \ldots, d - 1} X_{2|3}],\allowbreak \E[X_{d|2, \ldots, d_n - 1} X_{4|3}]$, which are all 0 when the conditional $x_{a|D}$ are computed at the true parameter.
We set all $\alpha_{j,n}$ to 1 and assume that all parameters in the $t$-th tree are the same, denoted by $\rho_t$.
Plugging in the expected derivatives of the last estimating equation from \cref{lem:derivD-vine} in the bound in \eqref{eq:A3_bound} yields
\[
\sum_{t=1}^{d_n - 2}  \left| \E\left[ \frac{\partial}{\partial \rho_t} \phi(\bX; \brho^*)_{p_n} \right] \right| 
= 2 \sum_{t=1}^{d_n - 2}   \left|  \frac{\rho_t \cdot\rho_{d_n - 1} }{(1-\rho_t^2)(1-\rho_{d_n - 1}^2)} \right| 
= \left|  \frac{\rho_{d_n - 1} }{1-\rho_{d_n - 1}^2} \right|  \cdot 2 \sum_{t=1}^{d_n - 2}   \left|  \frac{\rho_t  }{1-\rho_t^2} \right|.
\]
Note that we omitted the $*$ marking the true parameter to improve readability.
It is easy to see that this expression diverges if all parameters are the same.
If $\rho_t$ decreases fast enough such that the above expression converges, we have to compare its limit to the expected derivative of the last estimating equation w.r.t.~the last parameter, $-(1+\rho_{d_n - 1}^2)/(1-\rho_{d_n - 1}^2)^2$ (see the proof of \cref{lem:CVineGauss}), which converges to $-1$ for $\rho_t \to 0$.

For $\rho_t = 1/\sqrt{t+1}$, we have
\begin{align*}
 \left|  \frac{\rho_{d_n - 1} }{1-\rho_{d_n - 1}^2} \right|  \cdot 2 \sum_{t=1}^{d_n - 2}   \left|  \frac{\rho_t  }{1-\rho_t^2} \right|
 & = \frac{1}{\sqrt{d_n}} \frac{d_n}{d_n - 1} \cdot 2  \sum_{t=2}^{d_n - 1}   \frac{1}{\sqrt{t}} \frac{t}{t -1} > \frac{2}{\sqrt{d_n}}  \cdot   \sum_{t=2}^{d_n - 1}   \frac{1}{\sqrt{t}}.
\end{align*}
Since $\sum_{t=2}^{d_n - 1}  t^{-1/2} \approx 2 \sqrt{d_n}$ (this follows from the Euler-Maclaurin-formula, see for example \citet[][Chapter 9.5]{ConcrMaths}), the obtained bound for \eqref{eq:A3_sum} (for $\bttheta = \btheta^*$) is positive, indicating that in this setting, (A3) might be violated for $\alpha_{j,n} = 1$, which is in line with the numerical validation in \cref{sec:ValAssumpA3}.
For $\rho_t = 1/(t+1)$, we obtain in a similar manner
\[
 \left|  \frac{\rho_{d_n - 1} }{1-\rho_{d_n - 1}^2} \right|  \cdot 2 \sum_{t=1}^{d_n - 2}   \left|  \frac{\rho_t  }{1-\rho_t^2} \right|
 \approx \frac{2}{d_n} \sum_{t=2}^{d_n - 1} \frac{1}{t}.
\]
Since $\sum_{t=2}^{d_n - 1}  t^{-1} \approx \ln d_n$ \citep[see for example][Theorem 0.8]{TenenB}, this expression converges to 0, which implies that \eqref{eq:A3_sum} is negative for $\bttheta = \btheta^*$.

These analytical derivations only hold when the expectations are evaluated at the true parameter, as this ensures that the recursively computed $x_{a|D}(\brho)$ behave the same as the true conditional $X_{a|D}$.
Obtaining brief analytical expressions for $\E\left[ \frac{\partial}{\partial \rho_k} \phi(\bX; \tilde \brho)_j \right]$ for a $\tilde \brho \neq \brho^*$ is impossible.
In this case, rules like $\E[X_{a|b}(\tilde \rho_{ab}) \cdot X_b] = 0$ do not hold, leading to more non-zero terms in \eqref{eq:A3_sum}.
Deriving negative upper bounds for \eqref{eq:A3_sum} with $\bttheta = \btheta^*$ is therefore not sufficient to ensure the validity of (A3), so these theoretical results are complemented by the numerical validation in \cref{sec:ValAssumpA3}.

\myparagraph{Derivatives w.r.t.~$\bu$}

The additional assumptions to ensure convergence and asymptotic normality of the estimator $\hbtheta_X$ with nonparametric estimation of margins mostly concern partial derivatives of $\phi(\bu; \btheta^*)$ w.r.t.~$\bu$.
These derivatives are sums of products similar to those in \eqref{eq:deriv_product}, with $h'$ replaced by the derivative of $h(u_1, u_2;\theta)$ w.r.t.~$u_1$ or $u_2$.
Again, independence pair copulas imply $h_2 = 0$, setting certain derivatives to 0.
Pair copulas that are close to independence are therefore more ``well-behaved'' than those with strong dependence, leading to milder assumptions on $d_n$.

\textcolor{red}{\section{Example: Assumptions for FGM vines}}

We now consider a $d_n$-dimensional vine with arbitrary R-vine structure, where each pair copula is a Farlie-Gumbel-Morgenstern (FGM) copula \citep[see for example][]{Joe2014} with cdf and density given by
\[
  C_\theta(u,v)=uv+\theta uv(1-u)(1-v), 
  \qquad c_\theta(u,v)=1+\theta a(u)a(v), \quad \text{where } \ a(u) = 1 - 2u
\]
for $\theta \in [0,1]$.
The score (derivative of the log density) and the $h$-function are 
\[
s_\theta(u,v)=\partial_\theta \log c(u,v;\theta)
  =\frac{a(u)a(v)}{1+\theta a(u)a(v)}, \qquad h_\theta(u,v)
  =u+\theta b(u)a(v), \quad \text{where } \ b(u) = u(1-u).
\]
A nice property of the FGM copula is that both score and $h$-function as well as all relevant first and second derivatives are bounded as long as $\theta$ is bounded away from $\pm 1$.
This leads to the following proposition:

\begin{proposition}\label{prop:FGM1}
    Consider a $d_n$-dimensional FGM vine, where all parameters in a given tree level $t$ share the same true value, denoted by $\theta_t^*$, and assume that
\[
\eta_{t,n} \coloneqq \sup_{\btheta \in \Theta_n} \max_{e \in E_t} |\theta_e | \le \rho < 1\qquad \text{for some fixed $\rho$}.
\]
For example, one may take $\eta_{t,n} = | \theta_t^* | + r_n C_n$ with $C_n \to \infty$ from the definition of $\Theta_n$ and $r_n=\sqrt{\log d_n/n}$.
Set
\[
  \delta_\rho \coloneqq 1-\rho,
  \quad
  \kappa_\rho \coloneqq (9(1+\rho))^{-1},
  \quad P_{t,n} \coloneqq \sum_{l=1}^t \prod_{m = l + 1}^t \left( 1 + \frac 3 2 \eta_{m, n} \right), \quad P_{t,n}^* \coloneqq \sum_{l=1}^t \prod_{m = l + 1}^t \left( 1 + \frac 3 2 | \theta_m^* | \right).
\]
and $\alpha_{j,n} =1$ for all $j, n$.
Let $A_n \in \R^{q \times p_n}$ with fixed $q$ and denote $a_n \coloneqq \| A_n \|_\infty$.

Then, (A1)--(A6) hold if
\begin{align*}
  & \max_{1 \le t \le d_n - 1} | \theta_t^*| \, P_{t-1,n}^* \le c \frac{1 - \rho}{9(1+\rho)} \ \text{ for some fixed $c < 1$}, \qquad
r_n C_n P_{d_n,n}^2 \to 0, \qquad  P_{d_n,n} p_n / n \to 0, \\
& a_n^2  P_{d_n,n}^2 p_n^2 \ln p_n / n \to 0, \qquad (a_n C_n \ln p_n  P_{d_n, n}^2 )^2 p_n / n \to 0, \qquad a_n^4/n \to 0.
\end{align*}
\end{proposition}

\begin{proposition}\label{prop:FGM2}
  For a $d_n$-dimensional FGM vine, where all parameters in a given tree level $t$ share the same true value, denoted by $\theta_t^*$, with
  \[
  |\theta_t^*| \le  \frac{1}{(t+1)^k}  \qquad \text{for some fixed $k > 1$,}
  \]
  the assumptions of Theorem 1 are satisfied if $d_n^4 \ln d_n / n \to 0$. 
  The assumptions of Theorem 2 are satisfied for matrices $A_n \in \R^{q \times p_n}$ with $\| A_n \|_\infty = O(1)$ if $d_n^6 \, (\ln d_n)^2 / n \to 0$.
\end{proposition}
 
\section{Proofs of Propositions}

\subsection{Proof of \autoref{prop:CVine}}

We parameterize the Gaussian copula density using the transformed variables $x_{a|D} = \Phi^{-1}(u_{a|D})$.
Let $1$ and $(1,2)$ be the root nodes in the first and second tree, i.e., the first tree contains the copulas $c_{1i}, i = 2, \ldots, d_n$, and the second tree contains the copulas $c_{2i; 1}, i = 3, \ldots, d_n$.
To simplify notation, we first assume that $\rho_{12}^* >0$ and that all true parameters $\rho^*_{2i; 1}, i = 3, \ldots, d_n$ in the second tree are equal and positive, and then extend the proof to $|\rho^*_{23;1}| = |\rho^*_{24;1}| = \ldots = |\rho^*_{2d_n;1}| \neq 0$.

Since
\[
\frac 1 2 \lambda_{\max} \lf(J(\brho^*) + J(\brho^*)^\top \ri) = \sup_{\| \bu \| = 1} \bu^\top J(\brho^*) \bu,
\]
we derive conditions necessary for $\sup_{\| \bu \| = 1} \bu^\top J(\brho^*) \bu$ not to diverge to $+\infty$.
Let $\bu$ be such that the entry corresponding to the estimating equation of $\rho_{12}$ is $u_1 > 0$, and the $d_n-2$ entries corresponding to the equations of $\rho_{2i;1}$ are all $u_2 < 0$.
All other entries of $\bu$ are set to zero.
Denote
\[
s(x_1, x_2; \rho) \coloneqq\frac{ \partial \log c(x_1, x_2;\rho)}{\partial \rho}
\]
and define
\[
a_1  \coloneqq\E\lf[\frac{\partial s_{12}(X_1, X_2; \rho_{12})}{\partial \rho_{12}} \ri], 
\; a_2  \coloneqq  \E\lf[\frac{\partial s_{2i;1}(X_{2|1}, X_{i|1}; \rho_{2i;1})}{\partial \rho_{2i;1}} \ri], 
\; b  \coloneqq  \E\lf[\frac{\partial s_{2i;1}(X_{2|1}, X_{i|1}; \rho_{2i;1})}{\partial \rho_{12}} \ri],
\]
where all expectations are computed plugging in the true $\rho^*$.
Since all $ \rho_{2i; 1}$ are the same, $a_2$ and $b$ do not depend on $i$.
As $\partial s_{2i;1}(X_{2|1}, X_{i|1}; \rho_{2i;1})/\partial \rho_{2j;1} = 0$ for $i \neq j$, we have
\begin{align*}
\bu^\top J(\brho^*) \bu & = u_1^2 a_1 + (d_n-2) u_2^2 a_2+ (d_n-2) u_1 u_2 b \\
 &= u_1^2 a_1 + (1 - u_1^2) a_2 - \sqrt{d_n-2} u_1 \sqrt{1 - u_1^2} b,
\end{align*}
where the second equality uses that $\| \bu \| = 1$ implies $u_2 = - \sqrt{(1-u_1^2)/(d_n-2)}$.
The first two terms are negative (since $a_1, a_2 < 0$, see \cref{lem:CVineGauss}) and do not depend on $d_n$ if $u_1$ is fixed.
Since
\[
b =  - \frac{\rho^*_{12} \rho^*_{2i;1}}{(1 - \rho^{*2}_{2i;1})(1 - \rho^{*2}_{12})} < 0
\]
by \cref{lem:CVineGauss} and $\rho^*_{12}, \rho^*_{2i;1} > 0$, the term $- \sqrt{d_n - 2}b$ diverges to $+ \infty$ for $d_n\to \infty$ unless $b = O(1/\sqrt{d_n})$.
The proof can be easily extended to the case where no assumptions are made on the sign of $\rho_{12}^*$ and where $|\rho^*_{23;1}| = |\rho^*_{24;1}| = \ldots = |\rho^*_{2d_n;1}| \neq 0$.

\textcolor{red}{
\subsection{Proof of \autoref{prop:FGM1}}}

    (A1) and (A2) hold since the FGM score $s_\theta (u,v)$ is bounded for $| \theta | \le \rho < 0$.
    For (A3), we have, since $\E [\phi(\bU; \btheta^*)_j] = 0$ and $|\Delta_k| \le \alpha_{k,n} = 1$ for all $k$, for all $j = 1, \ldots, p_n$ from any tree $t = 1, \ldots, d_n - 1$,
    \begin{align*}
      (r_n C_n)^{-1} \sign(\Delta_j) \E \left[\phi(\bU; \btheta^* + r_n C_n \bDelta)_j \right]   
    & \le  \E \left[  \frac{\partial }{\partial \theta_j } \phi(\bU; \btheta^* )_j \right] +  \sum_{k=1}^{j-1} \left| \E \left[  \frac{\partial }{\partial \theta_k } \phi(\bU; \btheta^* )_j\right] \right|  \\
     & \quad + r_n C_n \frac 1 2 \sum_{k=1}^j \sum_{k' = 1}^j \left| \E \left[  \frac{\partial^2 }{\partial \theta_k \partial \theta_{k'}} \phi(\bU; \btheta^* )_j\right] \right| \\
     & \le - \kappa_{\rho} + \delta_\rho^{-1} | \theta_t^*| \, P_{t-1, n}^* + 6 r_n C_n  \delta_\rho^{-3} (1 + P_{t-1,n})^2
    \end{align*}
    by \cref{lem:FGMbounds}, \cref{lem:FGM_J} and \cref{lem:FGM2deriv}.
    The assumption on $| \theta_t^*| \, P_{t-1, n}^*$ implies $\delta_\rho^{-1} | \theta_t^*| \, P_{t-1, n}^* \le c \kappa_{\rho}$ with some $c < 1$ for all $t$, so (A3) holds since
    \[
    - \kappa_{\rho} + \delta_\rho^{-1} | \theta_t^*| \, P_{t-1, n}^* + 6 r_n C_n  \delta_\rho^{-3} (1 + P_{t-1,n})^2
    \le (-1 + c)  \kappa_{\rho} + o(1)
    \]
    since $r_n C_n P_{d_n,n}^2 \to 0$.
    For (A4), \cref{lem:FGM1deriv} and \cref{lem:FGMbounds} ($| \partial_\theta s_\theta (u,v) | \le \delta_\rho^{-2}$) give the deterministic bound
    \[
    \max_{1 \le j \le p_n} \sup_{\btheta  \in \Theta_n} \sum_{k=1}^j \lf| \frac{\partial}{\partial \theta_k} \phi(\bu; \btheta)_j \ri| 
    \le  \delta_\rho^{-2} + \delta_\rho^{-2} \max_{1 \le t \le d_n - 1} P_{t,n},
    \]
    so we may choose $M_n = D_n = \delta_\rho^{-2} ( 1 + P_{d_n,n})$.
    With $k_n = p_n$ in (A4), this assumption is satisfied since $( 1 + P_{d_n,n})^2 p_n / n \to 0$.

    For (A5), note that
    \begin{align*}
        \|A_n[\phi(\bu;\btheta^* +  \bDelta) - \phi(\bu;\btheta^* +  \bDelta')] \|
     & \le \sqrt{q} \max_{1 \le i \le q} \left| \sum_{j=1}^{p_n} (A_n)_{i,j} [\phi(\bu;\btheta^* +  \bDelta)_j - \phi(\bu;\btheta^* +  \bDelta')_j]  \right| \\
     & \le \sqrt{q} \  \max_{1 \le i \le q} \sum_{j=1}^{p_n} | (A_n)_{i,j}|  \max_{1 \le j \le p_n} | \nabla_{\btheta} \phi(\bu;\tilde \btheta)_j (\bDelta - \bDelta') | \\
     & \le \sqrt{q}\ a_n \ \| \bDelta - \bDelta' \|\max_{1 \le j \le p_n} \| \nabla_{\btheta} \phi(\bu;\tilde \btheta)_j \|   \le \sqrt{q}\ a_n \ \| \bDelta - \bDelta' \| \ \delta_\rho^{-2} (1 + P_{d_n,n})
    \end{align*}
    by \cref{lem:FGM1deriv}.
    The left-hand side of the first condition in (A5) is therefore bounded by $q \, a_n^2  \, \delta_\rho^{-4} (1 + P_{d_n,n})^2$.
    With $\tilde r_n^2 = p_n \ln p_n / n$, the first condition is satisfied since $a_n^2 (1 + P_{d_n,n})^2 p_n^2 \ln p_n / n \to 0$.
   Similarly, we may choose $D_n = \sqrt{q} \, a_n \, \delta_\rho^{-2} (1 + P_{d_n,n})$ for the second condition in (A5).
   Since $\sqrt{n}/(\tilde r_n p_n) = n/(p_n^{3/2} \sqrt{\ln p_n})$ and $a_n \, (1 + P_{d_n,n}) p_n^{3/2} \sqrt{\ln p_n} / n \to 0$, the second condition is satisfied.
   For the third condition in (A5), denote $B_n = J(\btheta^* +  r_n C_n  \bDelta) - J(\btheta^*)$ and note that by \cref{lem:FGM2deriv}, the maximum absolute row sum of $B_n$ (i.e., $\| B_n \|_\infty$) is bounded by $ 12 r_n C_n \delta_\rho^{-3} (1 + P_{d_n,n})^2$. Now, standard inequalities for matrix norms give
   \[
   \| A_n B_n  \|_2 \le \sqrt{\| A_n B_n \|_1 \| A_n B_n \|_\infty } \le \sqrt{q \, \| A_n B_n \|_\infty^2 } = \sqrt{q} \| A_n B_n \|_\infty \le \sqrt{q} \| A_n \|_\infty \| B_n \|_\infty = 12 \sqrt{q} \, a_n r_n C_n \delta_\rho^{-3} (1 + P_{d_n,n})^2 .
   \]
   As $\sqrt{n} \tilde r_n = \sqrt{p_n \ln p_n}$ and $r_n = \sqrt{\ln p_n / n}$, the third condition in (A5) holds since $(a_n  C_n \ln p_n (1 + P_{d_n,n})^2)^2 p_n / n \to 0$.
   For (A6), each entry of $A_n \phi(\bu; \btheta)$ is $O(a_n)$ since the entries of $\phi(\bu; \btheta)$ are uniformly bounded, so $\| A_n \phi(\bu; \btheta) \| = O(\sqrt{q}\, a_n)$ and (A6) holds since $a_n^4/n \to \infty$.
   
\textcolor{red}{
\subsection{Proof of \autoref{prop:FGM2}}}

We may set $\eta_{t,n} = \theta_t^* + r_n C_n$ with $C_n \to \infty$ arbitrarily slowly and $r_n = \sqrt{\ln p_n / n}$.
We show that $P_{d_n,n} = O(d_n)$, as this implies that (A1)--(A4) hold if $d_n^4 \ln d_n / n \to 0$.
Using $(1+x) \le \exp(x)$ for $x > 0$, it holds that
\[
P_{t,n}  \le  \sum_{l=1}^t \prod_{m = l + 1}^t \exp\left(   \frac 3 2 \eta_{m, n} \right) = \sum_{l=1}^t \exp\left(\frac 3 2 \sum_{m = l + 1}^t  ( \theta_m^* + r_n C_n )\right) = \sum_{l=1}^t \exp\left(\frac 3 2 \sum_{m = l + 1}^t   \theta_m^* \right) \exp\left(\frac{3}{2} \, t \, r_n C_n \right) = O(t),
\]
since $d_n r_n C_n = d_n \sqrt{\ln d_n} C_n / n = O(1)$ and $\sum_{m = l + 1}^t   \theta_m^* = O(1)$.
To verify
\[
\max_{1 \le t \le d_n}  | \theta_t^*|\, P_{t-1,n}^* \le c \frac{1 - \rho}{9(1+\rho)} \ \text{ for some $c < 1$},
\]
note that the right-hand side is uniformly bounded and $ | \theta_t^*|\, P_{t-1,n}^* \to 0$ as $t \to \infty$.
It suffices to verify that this assumption holds for large enough $t > t_0$ for some fixed $t_0$, as one can always choose uniformly bounded sequences $\alpha_{j,n}$ (i.e., with no effect on the rate of convergence) such that (A3) holds for $t < t_0$, as long as $\E[\frac{\partial}{\partial \theta_j} \phi(\bU; \btheta^*)_j]$ is negative and bounded away from 0, which is the case here.

For Theorem 2, both remaining conditions in \cref{prop:FGM1} hold if $d_n^6 (\ln d_n)^2 /n \to 0$, since $P_{d_n, n} = O(d_n)$ and $a_n = O(1)$.
\vspace{0.5cm}

\section{Lemmas}

\subsection{Gaussian vines}

\begin{lemma}
\label{lem:CVineGauss}
For a Gaussian vine with $X_{a|D} = \phi^{-1}(U_{a|D}), X_{b|D} = \phi^{-1}(U_{b|D})$ and true parameter $\rho$, it holds that
\[
\E\lf[ \frac{\partial s_{ab;D}(X_{a|D}, X_{b|D}; \rho_{ab;D})}{\partial \rho_{ab;D}} \ri] = - \frac{1 + \rho_{ab;D}^2}{(1 - \rho_{ab;D}^2)^2}
\]
and 
\[
\E\lf[ \frac{\partial s_{2 i ; 1}(X_{2|1}, X_{i | 1}; \rho_{2i ; 1})}{\partial \rho_{12}} \ri] = - \frac{\rho_{12} \rho_{2i;1}}{(1 - \rho_{2i;1}^2)(1 - \rho^2_{12})}, \quad i = 3, \ldots, d.
\]
\end{lemma}

\begin{proof}
In a Gaussian bivariate copula, the log-likelihood of a parameter $\rho$ is given by
\[
\log c(x_1, x_2 ; \rho) = - \frac 1 2 \log(1 - \rho^2) - \frac{\rho^2 (x_1^2 + x_2^2) - 2 \rho x_1 x_2}{2 ( 1 - \rho^2)},
\]
see for example \citet{Czado2019} for the copula density.
The derivative w.r.t.~$\rho$ is
\[
s(x_1, x_2 ; \rho) \coloneqq\frac{\partial \log c(x_1, x_2 ; \rho) }{\partial \rho} 
= \frac{\rho}{1 - \rho^2} - \frac{\rho}{(1-\rho^2)^2} (x_1^2 + x_2^2) + \frac{1 + \rho^2}{(1 - \rho^2)^2} x_1 x_2.
\]
Note that these are the entries in $\phi(\bx; \brho)$ with $x_1, x_2$ replaced by conditional $x_{a|D}$, which are computed recursively using $\brho$ and the original $\bx$.
The derivative of $s(x_1, x_2 ; \rho)$ w.r.t.~$\rho$ is
\[
\frac{\partial s(x_1, x_2 ; \rho)}{ \partial \rho} = \frac{1 + \rho^2}{(1 - \rho^2)} - \frac{1 + 3 \rho^2}{(1 - \rho^2)^3}(x_1^2 + x_2^2)  + 2 \frac{3 \rho + \rho^3}{(1 - \rho^2)^3}  x_1 x_2
\]
and the derivative w.r.t.~$x_1$ is
\[
\frac{\partial s(x_1, x_2 ; \rho)}{ \partial x_1} = \frac{(1 + \rho^2) x_2 - 2 \rho x_1}{(1- \rho^2)^2}.
\]
The $h$-function is given by $h(x_1, x_2 ; \rho) = (x_1 - \rho x_2)/\sqrt{1 - \rho^2}$ \citep[see for example][]{Schepsm2014}, so the derivative of $h$ w.r.t.~$\rho$ is
\begin{align*}
\frac{\partial h(x_1, x_2 ; \rho) }{ \partial \rho} 
& = \frac{\rho x_1}{(1-\rho^2)^{3/2}} - \frac{\rho^2 x_2}{(1-\rho^2)^{3/2}} - \frac{x_2}{\sqrt{1-\rho^2}} \\
& =  \frac{\rho x_1 - \rho^2 x_2 - (1- \rho^2) x_2}{(1-\rho^2)^{3/2}}
=  \frac{\rho x_1 - x_2}{(1-\rho^2)^{3/2}}
\end{align*}
For the expectation of $\partial s(x_1, x_2 ; \rho) / \partial \rho$ (evaluated at the true parameter), we obtain
\begin{align*}
\E\lf[\frac{\partial s(X_1, X_2 ; \rho)}{ \partial \rho} \ri] 
& = \frac{1 + \rho^2}{(1 - \rho^2)} - 2 \frac{1 + 3 \rho^2}{(1 - \rho^2)^3} + 2 \frac{3 \rho^2 + \rho^4}{(1 - \rho^2)^3} \\
& = \frac{(1 - \rho^2)(1 + \rho^2) - 2 - 6 \rho^2 + 6 \rho^2 + 2 \rho^4}{(1 - \rho^2)^3}
= \frac{-1 + \rho^4}{(1 - \rho^2)^3} = - \frac{1 + \rho^2}{(1 - \rho^2)^2}
\end{align*}
using $\E(X_1^2 ) = \E(X_2^2) = 1$ and $\E(X_1 X_2) = \rho$ since $X_1, X_2$ are standard normally distributed with correlation $\rho$.
The statement for pair copulas $c_{ab;D}$ involving conditioning variables $D$ follows immediately since $\E(X_{a|D} X_{b|D}) = \rho_{ab;D}$.

For the second statement, the chain rule yields
\begin{align*}
\frac{\partial s_{2 i ; 1}(x_{2|1}, x_{i | 1}; \rho_{2i ; 1})}{\partial \rho_{12}}& = 
\frac{\partial s_{2 i ; 1}(x_{2|1}, x_{i | 1}; \rho_{2i ; 1})}{\partial x_{2|1}} \cdot \frac{\partial h(x_2, x_1; \rho_{12})}{\partial \rho_{12}} \\
&=\frac{(1 + \rho^2_{2i;1}) x_{i|1} - 2 \rho_{2i;1} x_{2|1}}{(1- \rho^2_{2i;1})^2}  \cdot \frac{\rho_{12} x_2 - x_1}{(1-\rho^2_{12})^{3/2}}.
\end{align*}
To compute its expectation, we need $\E[X_{i|1} X_2], \E[X_{i|1} X_1], \E[X_{2|1} X_1]$ and $\E[X_{2|1} X_2]$.
Plugging in the definition $x_{a|b} = h(x_a, x_b ; \rho_{ab})$ yields
\begin{align*}
\E[X_{i|1} X_1] & = \E \lf[\frac{X_i X_1 - \rho_{1i} X_1^2}{\sqrt{1 - \rho_{1i}^2}} \ri] 
= \frac{\rho_{1i} - \rho_{1i} }{\sqrt{1 - \rho_{1i}^2}} = 0 = \E[X_{2|1} X_1], \\
\E[X_{2|1} X_2] & = \E \lf[\frac{X_2^2 - \rho_{12} X_1 X_2}{\sqrt{1 - \rho_{12}^2}} \ri] 
= \frac{1 - \rho_{12}^2}{\sqrt{1 - \rho_{12}^2}} = \sqrt{1 - \rho_{12}^2}, \\
\E[X_{i|1} X_2] & = \E \lf[\frac{X_i X_2 - \rho_{1i} X_1 X_2}{\sqrt{1 - \rho_{1i}^2}} \ri] 
= \frac{\rho_{2i} - \rho_{1i}\rho_{12} }{\sqrt{1 - \rho_{1i}^2}} 
= \rho_{2i;1} \sqrt{1 - \rho_{12}^2},
\end{align*}
where we used the definition of the partial correlation \citep{Kurowicka2003} in the last step:
\[
 \rho_{2i;1} = \frac{\rho_{2i} - \rho_{1i}\rho_{12}}{\sqrt{(1 - \rho_{12}^2) (1 - \rho_{1i}^2})}.
\]
We now obtain
\begin{align*}
\E \lf[\frac{\partial s_{2 i ; 1}(X_{2|1}, X_{i | 1}; \rho_{2i ; 1})}{\partial \rho_{12} } \ri]
& = \E \lf[\frac{(1 + \rho^2_{2i;1}) X_{i|1} - 2 \rho_{2i;1} X_{2|1}}{(1- \rho^2_{2i;1})^2}  \cdot \frac{\rho_{12} X_2 - X_1}{(1-\rho^2_{12})^{3/2}}\ri] \\
& = \frac{(1 + \rho^2_{2i;1})  \rho_{12} \rho_{2i;1} \sqrt{1 - \rho_{12}^2} - 2  \rho_{2i;1} \rho_{12}  \sqrt{1 - \rho_{12}^2} }{(1- \rho^2_{2i;1})^2 (1-\rho^2_{12})^{3/2}}  \\
& = \frac{ \rho_{12} \rho_{2i;1} (\rho_{2i;1}^2 - 1 )}{(1- \rho^2_{2i;1})^2 (1-\rho^2_{12})}  
 = -  \frac{ \rho_{12} \rho_{2i;1} }{(1- \rho^2_{2i;1}) (1-\rho^2_{12})}  .
\end{align*}
\end{proof}

\textcolor{red}{\begin{lemma}
\label{lem:derivD-vine}
In a $d$-dimensional Gaussian D-vine with true parameter $\brho$, it holds that
\[
\E\left[\frac{\partial s(X_{1|2, \ldots, d-1}, X_{d|2, \ldots, d-1}; \rho_{1d;2, \ldots, d-1} )}{\partial \rho_k} \right] = - \frac{\rho_k \cdot\rho_{1d;2, \ldots, d-1} }{(1-\rho_k^2)(1-\rho_{1d;2, \ldots, d-1}^2)}
\]
for all $\rho_k$ that correspond to leaf nodes, i.e., $\rho_{12}, \rho_{13;2}, \rho_{14;23}, \ldots, \rho_{1 (d-1); 2, \ldots, d-2}$ and $\rho_{d-1, d}, \rho_{d-2, d;d - 1}, \ldots, \rho_{2, d; 3, \ldots, d-1}$.
\end{lemma}}

\begin{proof}
We only prove the statement for $\rho_k = \rho_{12}$.
For all other $\rho_k$, the statement either follows by symmetry (for $\rho_{(d-1, d}$) or from treating the vine as a lower-dimensional vine (i.e., the derivative w.r.t.~$\rho_{13;2}$ can be derived in the same way as the derivative of the estimating equation of $\rho_{1 (d-1); 2, \ldots, d-2}$ w.r.t.~$\rho_{12}$ etc.).

We use the above formulas for $s_1, h_1$ and $h'$.
This yields
\begin{align*}
 \frac{\partial s(x_{1|2, \ldots, d-1}, x_{d|2, \ldots, d-1}; \rho_{1 d;2, \ldots, d-1})}{\partial \rho_{12}} = & 
\frac{(1+\rho_{1 d;2, \ldots, d-1}^2)x_{d|2, \ldots, d-1} - 2\rho_{1 d;2, \ldots, d-1}x_{1|2, \ldots, d-1}}{(1-\rho_{1 d;2, \ldots, d-1}^2)^2} \\
 & \cdot \frac{1}{\sqrt{1 - \rho_{1(d-1);2, \ldots, d-2}^2}} \frac{1}{\sqrt{1 - \rho_{1(d-2);2, \ldots, d-3}^2}} \cdots \\& \cdots \frac{1}{\sqrt{1 - \rho_{13;2}^2}}  \frac{\rho_{12} x_1 - x_2}{(1-\rho_{12}^2)^{3/2}} \\
 = & \frac{(1+\rho_{1 d;2, \ldots, d-1}^2)x_{d|2, \ldots, d-1} - 2\rho_{1 d;2, \ldots, d-1}x_{1|2, \ldots, d-1}}{(1-\rho_{1 d;2, \ldots, d-1}^2)^2}  \cdot
\frac{\rho_{12} x_1 - x_2}{(1-\rho_{12}^2)} \cdot
\prod_{k = 2}^{d-1} \frac{1}{\sqrt{1 - \rho^2_{1k;2, \ldots, k-1}}}.
\end{align*}
We need the following expectations:
\begin{align*}
\E[X_{1|2, \ldots, d-1} \cdot X_1] & = \prod_{k = 2}^{d-1} \sqrt{1 - \rho^2_{1k; 2,\ldots, k-1} } ,\\
\E[X_{d|2, \ldots, d-1} \cdot X_1] & = \rho_{1d;2, \ldots, d-1}  \prod_{k = 2}^{d-1} \sqrt{1 - \rho^2_{1k; 2,\ldots, k-1} } ,\\
\E[X_{d|2, \ldots, d-1} \cdot X_2] & =  0, \\
\E[X_{1|2, \ldots, d-1} \cdot X_2] & = 0.
\end{align*}
The last two equations follow from the definition of conditional expectations.
The first two equations are shown in \cref{lem:expectD-vine}.
We therefore have
\begin{align*}
\E\left[  \frac{\partial s(x_{1|2, \ldots, d-1}, x_{d|2, \ldots, d-1}; \rho_{1 d;2, \ldots, d-1})}{\partial \rho_{12}}  \right] & = \frac{(1+\rho_{1 d;2, \ldots, d-1}^2) \rho_{12}\cdot \rho_{1d;2, \ldots, d-1} - 2 \rho_{12} \cdot \rho_{1d;2, \ldots, d-1}  }{ (1-\rho_{12}^2)(1-\rho_{1 d;2, \ldots, d-1}^2)^2} \\
& = - \frac{\rho_{12} \cdot \rho_{1d;2, \ldots, d-1} (1 - \rho_{1 d;2, \ldots, d-1}^2)}{ (1-\rho_{12}^2)(1-\rho_{1 d;2, \ldots, d-1}^2)^2} 
 = - \frac{\rho_{12} \cdot \rho_{1d;2, \ldots, d-1} }{(1-\rho_{12}^2)(1-\rho_{1 d;2, \ldots, d-1}^2) }.
\end{align*}

\end{proof}

\textcolor{red}{
\begin{lemma}
\label{lem:expectD-vine}
In a $d$-dimensional Gaussian D-vine with true parameter $\brho$, it holds that
\begin{align*}
\E[X_{1|2, \ldots, d-1} \cdot X_1] & = \prod_{k = 2}^{d-1} \sqrt{1 - \rho^2_{1k; 2,\ldots, k-1} } ,\\
\E[X_{d|2, \ldots, d-1} \cdot X_1] & = \rho_{1d;2, \ldots, d-1}  \prod_{k = 2}^{d-1} \sqrt{1 - \rho^2_{1k; 2,\ldots, k-1} } .
\end{align*}
\end{lemma}}
\begin{proof}
Both equalities can be shown by induction:

\textbf{Base case:} (d=3)
Since $\E[X_1^2] = 1$ and $\E[X_1 X_2] = \rho_{12}$, we have, using the definition of the Gaussian $h$-function,
\[
\E[X_{1|2} \cdot X_1]  = \E \lf[\frac{X_1 - \rho_{12} X_2}{\sqrt{1 - \rho_{12}^2} } \cdot X_1 \ri]
= \frac{1 - \rho_{12}^2}{\sqrt{1 - \rho_{12}^2} } = \sqrt{1 - \rho_{12}^2}.
\]
For the second equality, we have, using the definition of the partial correlation $\rho_{13;2}$,
\[
\E[X_{3|2} \cdot X_1] = \E\lf[\frac{X_3 - \rho_{23} X_2}{\sqrt{1 - \rho_{23}^2}} \cdot X_1 \ri] = \frac{\rho_{13} - \rho_{23} \rho_{12}}{\sqrt{1 - \rho_{23}^2}} = \rho_{13;2} \sqrt{1 - \rho_{12}^2}.
\]

\textbf{Induction step:}
We assume that the stated equalities hold for some fixed $d-1$ (induction hypothesis, IH).
It then holds that
\begin{align*}
\E[X_{1|2, \ldots, d-1} \cdot X_1] & = \E \lf[\frac{X_{1|2, \ldots, d-2} -  \rho_{1(d-1); 2, \ldots, d-2  } X_{d-1|2, \ldots, d-2} }{\sqrt{1 -  \rho_{1(d-1); 2, \ldots, d-2  }^2}} \cdot X_1 \ri]  = \frac{ \E[X_{1|2, \ldots, d-2} \cdot X_1] -  \rho_{1(d-1); 2, \ldots, d-2  } \E[ X_{d-1|2, \ldots, d-2} \cdot X_1]}{\sqrt{1 -  \rho_{1(d-1); 2, \ldots, d-2  }^2}} \\
& \overset{\text{IH}}{=} \frac{\prod_{k = 2}^{d-2} \sqrt{1 - \rho^2_{1k; 2,\ldots, k-1} } -  \rho_{1(d-1); 2, \ldots, d-2  }^2 \prod_{k = 2}^{d-2} \sqrt{1 - \rho^2_{1k; 2,\ldots, k-1} } }{\sqrt{1 -  \rho_{1(d-1); 2, \ldots, d-2  }^2}}  =  \prod_{k = 2}^{d-1} \sqrt{1 - \rho^2_{1k; 2,\ldots, k-1} } 
\end{align*}
and
\begin{align*}
& \E[X_{d|2, \ldots, d-1} \cdot X_1]  
 = \E \lf[\frac{X_{d|2, \ldots, d-2} - \rho_{(d-1), d; 2, \ldots, d-2} X_{(d-1)|2, \ldots, d-2} }{\sqrt{1 - \rho_{(d-1), d; 2, \ldots, d-2}^2}} \cdot X_1 \ri]  = \frac{\E[ X_{d|2, \ldots, d-2} \cdot X_1] - \rho_{(d-1), d; 2, \ldots, d-2} \E[ X_{(d-1)|2, \ldots, d-2} \cdot X_1]}{\sqrt{1 - \rho_{(d-1), d; 2, \ldots, d-2}^2}}  \\
& \overset{\text{IH}}{=}  \frac{\rho_{1d;2, \ldots, d-2} \prod_{k=2}^{d-2} \sqrt{1 - \rho^2_{1k;2, \ldots, k-1} }  - \rho_{(d-1), d; 2, \ldots, d-2}\cdot \rho_{1,(d-1);2, \ldots, d-2}  \prod_{k=2}^{d-2} \sqrt{1 - \rho^2_{1k;2, \ldots, k-1} } }{\sqrt{1 - \rho_{(d-1), d; 2, \ldots, d-2}^2}} \\
& = \frac{\rho_{1d;2, \ldots, d-2}  -  \rho_{(d-1), d; 2, \ldots, d-2} \cdot \rho_{1,(d-1);2, \ldots, d-2} } {\sqrt{1 - \rho_{(d-1), d; 2, \ldots, d-2}^2}}  \prod_{k=2}^{d-2} \sqrt{1 - \rho^2_{1k;2, \ldots, k-1} }  \\
& \overset{(*)}{=}  \rho_{1d;2, \ldots, d-1} \sqrt{1 - \rho^2_{1,(d-1); 2, \ldots, d-2}}  \prod_{k=2}^{d-2} \sqrt{1 - \rho^2_{1k;2, \ldots, k-1} }  = \rho_{1d;2, \ldots, d-1}  \prod_{k = 2}^{d-1} \sqrt{1 - \rho^2_{1k; 2,\ldots, k-1} },
\end{align*}
where we used the definition of the partial correlation $\rho_{1d;2, \ldots, d-1}$ in $(*)$.
\end{proof} 
\subsection{FGM vines}
\textcolor{red}{
\begin{lemma}\label{lem:FGMbounds} 
    For an FGM copula with $| \theta | \le \rho < 1$, the first- and second-order derivatives of the $h$-function w.r.t.~its first and second argument and $\theta$ satisfy
    \begin{align*}
  |\partial_1 h_\theta(u,v)| \le 1+|\theta |, \quad 
  |\partial_2 h_\theta(u,v)| \le | \theta |/2, \quad
  |\partial_\theta h_\theta(u,v)| \le 1/4,  \\
  |\partial_{11}h_\theta(u,v)| \le 2|\theta |,
  \qquad
  |\partial_{12}h_\theta(u,v)| \le 2|\theta |,
  \qquad
  \partial_{22}h_\theta(u,v)=0, \\
  |\partial_{1\theta}h_\theta(u,v)|\le1,
  \qquad
  |\partial_{2\theta}h_\theta(u,v)| \le1/2,
  \qquad
  \partial_{\theta\theta}h_\theta(u,v)=0. 
\end{align*}
All first- and second-order partial derivatives of $s_\theta(u,v)$ w.r.t.~$u,v,\theta$ are bounded in absolute value by
\[
| \partial_x s_\theta(u,v)|  \le 2 \delta_\rho^{-2}, 
   \qquad | \partial_{xy} s_\theta(u,v)|  \le 12 \delta_\rho^{-3},  \quad \text{where } \ x,y = v, u, \theta, \quad \delta_\rho \coloneqq 1-\rho,
\]
and it holds that
\[
  -\E[\partial_\theta s_\theta(U,V)]
  \ge \kappa_\rho \coloneqq (9(1+\rho))^{-1}.
\]
\end{lemma}}

\begin{proof}
    Recall the score and the $h$-function of an FGM copula:
    \[
    s_\theta(u,v)
  =\frac{a(u)a(v)}{1+\theta a(u)a(v)}, \qquad h_\theta(u,v)
  =u+\theta b(u)a(v), \quad \text{where } \ a(u) = 1 - 2u, \ b(u) = u(1-u).
    \]
    Writing $z=a(u)a(v)$, we have, using $| a(u) | \le 1$ and $\partial_u a(u) = -2$,
\[
  s_\theta(u,v)=\frac{z}{1+\theta z},
  \qquad
  |z|\le1,
  \qquad
  |\partial_u z|\le2,
  \qquad
  |\partial_v z|\le2,
  \qquad
  |\partial_{uv}z| \le4, \qquad \partial_{uu}z = 0.
\]
The first and second derivatives of $s_\theta(u,v)$ w.r.t.~$z$ and $\theta$ are
\begin{align*}
  & \partial_z\frac{z}{1+\theta z}
  = (1+\theta z)^{-1} - \theta z(1+\theta z)^{-2}
  =(1+\theta z)^{-2},
  \qquad
  \partial_\theta\frac{z}{1+\theta z}=-z^2(1+\theta z)^{-2}, \\
  & \partial_{zz}\frac{z}{1+\theta z}=-2\theta(1+\theta z)^{-3},
  \quad
  \partial_{z\theta}\frac{z}{1+\theta z}=-2z(1+\theta z)^{-3},
  \quad
  \partial_{\theta\theta}\frac{z}{1+\theta z}=2z^3(1+\theta z)^{-3}.
\end{align*}
The chain rule and the above bounds on the derivatives of $z$ yield
\begin{align*}
& | \partial_u s_\theta(u,v) | \le 2 \delta_\rho^{-2}, \quad | \partial_v s_\theta(u,v) | \le 2 \delta_\rho^{-2}, \quad | \partial_\theta s_\theta(u,v) | \le  \delta_\rho^{-2}, \quad | \partial_{\theta \theta} s_\theta(u,v) | \le 2 \delta_\rho^{-3}, \\
& | \partial_{uu} s_\theta(u,v) | = | \partial_u [\partial_z s_\theta(u,v) \: \partial_u z] |
= | (\partial_u z)^2 \partial_{zz} s_\theta(u,v) + \partial_z s_\theta(u,v) \: \partial_{uu} z | \le 8 \delta_\rho^{-3}, \\
& | \partial_{v\theta} s_\theta(u,v)| = | \partial_{z\theta} s_\theta(u,v) \: \partial_v z| \le 4 \delta_\rho^{-3},\\
& | \partial_{vu} s_\theta(u,v) | =  | \partial_v [\partial_z s_\theta(u,v) \: \partial_u z] |
= | \partial_v z  \: \partial_{zz} s_\theta(u,v) \: \partial_u z + \partial_z s_\theta(u,v) \: \partial_{vu} z | \le 12 \delta_\rho^{-3}, 
\end{align*}
using $\delta_\rho^{-2} \le \delta_\rho^{-3}$ since $\delta_\rho \in (0,1)$ in the last inequality.
For the $h$-function $h_\theta(u,v)=u+\theta b(u)a(v)$, it holds that
\[
  \partial_1 h_\theta(u,v)=1+\theta a(u)a(v),
  \quad
  \partial_2 h_\theta(u,v)=-2\theta b(u),
  \quad
  \partial_\theta h_\theta(u,v)=b(u)a(v),
\]
using $\partial_u b(u) = a(u)$ in the first equality.
The displayed bounds follow from $|a(u)| \le1$ and $| b(u) | \le1/4$.
For the second-order derivatives, we have
\begin{align*}
    & \partial_{11} h_\theta(u,v) = -2 \theta a(v), \quad \partial_{12} h_\theta(u,v) = -2 \theta a(u), \quad \partial_{22} h_\theta(u,v) = 0, \\
    & \partial_{1 \theta} h_\theta(u,v) = a(u)a(v), \quad \partial_{2\theta} h_\theta(u,v)= -2 b(u), \quad \partial_{\theta \theta} h_\theta(u,v) = 0.
\end{align*}
The stated bounds follow immediately.
It remains to show $-\E[\partial_\theta s_\theta(U,V)] \ge \kappa_\rho$.
Plugging in the density $c_\theta(u,v)=1+\theta a(u)a(v)$, we have
\[
-\E[\partial_\theta s_\theta(U,V)] = \int_0^1\int_0^1
    \frac{a(u)^2a(v)^2}{1+\theta a(u)a(v)}\mathrm d u \mathrm d  v \ge \frac{1}{9(1+\rho)} = \kappa_\rho
    \]
since $1+\theta a(u)a(v) \le 1 + \rho$ and $\int_0^1 a(u)^2 \mathrm d = 1/3$.
\end{proof}
\textcolor{red}{
\begin{lemma}\label{lem:FGM1deriv} 
  Consider a $d_n$-dimensional FGM vine, where all parameters in a given tree level $t$ share the same true value, denoted by $\theta_t^*$, and denote
\[
\eta_{t,n} \coloneqq \sup_{\btheta \in \Theta_n} \max_{e \in E_t} |\theta_e | \le \rho < 1, \qquad \delta_\rho \coloneqq 1 - \rho.
\]
  For any parameter $\theta_j$ in any tree $t = 1, \ldots, d_n - 1$, it holds that
  \[
  \sup_{\btheta \in \Theta_n} \sup_{\bu \in [0,1]^{d_n}} \sum_{k=1}^{j-1} \left| \frac{\partial}{\partial \theta_k} \phi(\bu; \btheta)_j \right| \le  \delta_\rho^{-2} \sum_{l=1}^{t-1}  \prod_{m = l + 1}^{t-1} \left( 1 + \frac 3 2 \eta_{m, n} \right).
  \]
  If we replace $\btheta \in \Theta_n$ by $\btheta^*$, we may replace $\eta_{m,n}$ in the bound by $|\theta_m^*|$.
\end{lemma}}
\begin{proof}
  The score $\phi(\bu; \btheta)_j$ of parameter $\theta_j$ belonging to edge $(a,b; D)$ in tree $t$ is given by $s_{\theta_j}(u_{a|D}, u_{b|D})$.
  To obtain the derivatives w.r.t.~parameters in trees $1, \ldots, t-1$, observe that
  \[
 \partial_{\theta_k} s_{\theta_j}(u_{a|D}, u_{b|D}) 
  = \partial_u s_{\theta_j}(u_{a|D}, u_{b|D}) \partial_{\theta_k} u_{a|D} + \partial_v s_{\theta_j}(u_{a|D}, u_{b|D}) \partial_{\theta_k} u_{b|D}.
  \]
  From \cref{lem:FGMbounds}, we know that $| \partial_u s_{\theta_j}(u_{a|D}, u_{b|D}) | \le 2 \delta_\rho^{-2}$.
  We therefore have
 \begin{equation}\label{eq:FGMtreeDerivSum}
  \sup_{\btheta \in \Theta_n} \sup_{\bu \in [0,1]^{d_n}} \sum_{k=1}^{j-1} | s_{\theta_j}(u_{a|D}, u_{b|D}) | \le 2 \delta_\rho^{-2} \left(\sup_{\btheta \in \Theta_n} \sup_{\bu \in [0,1]^{d_n}} \sum_{k=1}^{j-1}  | \partial_{\theta_k} u_{a|D} | + \sup_{\btheta \in \Theta_n} \sup_{\bu \in [0,1]^{d_n}} \sum_{k=1}^{j-1}  | \partial_{\theta_k} u_{b|D} |  \right).
 \end{equation}
  We show that 
\begin{equation} \label{eq:FGMtreeDerivAll}
  \sup_{\btheta \in \Theta_n} \sup_{\bu \in [0,1]^{d_n}} \sum_{k=1}^{j-1}  | \partial_{\theta_k} u_{a|D} | \le 
  \frac 1 4 \sum_{l=1}^{t-1}  \prod_{m = l + 1}^{t-1} \left( 1 + \frac 3 2 \eta_{m, n} \right)
  \end{equation}
  by showing via recursion that for each tree $l = 1, \ldots, t-1$, the sum of partial derivatives of $u_{a|D}$ w.r.t.~the parameters in tree $l$ is bounded by 
  \begin{equation} \label{eq:FGMtreeDeriv}
   \sup_{\btheta \in \Theta_n} \sup_{\bu \in [0,1]^{d_n}} \sum_{e \in E_{l} }| \partial_{\theta_e} u_{a|D} | \le
   \frac 1 4 \prod_{m = l + 1}^{t-1} \left( 1 + \frac 3 2 \eta_{m, n} \right), 
  \end{equation}
  which immediately implies \eqref{eq:FGMtreeDerivAll}.
  The same bound holds for $| \partial_{\theta_e} u_{b|D} |$.
  Plugging them into \eqref{eq:FGMtreeDerivSum} yields the claim.

  To prove \eqref{eq:FGMtreeDeriv}, note that in tree $t-1$, only one parameter $\theta_k$ affects $u_{a|D} = h_{\theta_k}(u_{\tilde a | \tilde D}, u_{\tilde b | \tilde D})$, belonging to edge $(\tilde a, \tilde b; \tilde D)$.
  All other derivatives of $u_{a|D}$ w.r.t.~parameters in tree $t-1$ are 0. 
 The bound from \cref{lem:FGMbounds} gives
  \[
  |\partial_{\theta_k} u_{a|D}| = | \partial_{\theta_k} h_{\theta_k}(u_{\tilde a | \tilde D}, u_{\tilde b | \tilde D}) | \le 1/4, \qquad \text{ so } \qquad \sup_{\btheta \in \Theta_n} \sup_{\bu \in [0,1]^{d_n}} \sum_{e \in E_{t-1} }| \partial_{\theta_e} u_{a|D} | \le 1/4,
  \]
  which proves \eqref{eq:FGMtreeDeriv} for $l = t-1$.
  In tree $t-2$, there is exactly one parameter $\theta_{k1}$ that affects $u_{a|D}$ through $u_{\tilde a | \tilde D}$ and one $\theta_{k2}$ that affects $u_{a|D}$ through $u_{\tilde b | \tilde D}$: 
  \begin{align*}
      |\partial_{\theta_{k1}} u_{a|D}| +  |\partial_{\theta_{k2}} u_{a|D}| 
   & = |  \partial_1 h_{\theta_k}(u_{\tilde a | \tilde D}, u_{\tilde b | \tilde D})  \partial_{\theta_{k1}} h_{\theta_{k1}}(\ldots) | + | \partial_2 h_{\theta_k}(u_{\tilde a | \tilde D}, u_{\tilde b | \tilde D}) \partial_{\theta_{k2}} h_{\theta_{k2}}(\ldots)  | \\
   &  \le \frac{1}{4} (|  \partial_1 h_{\theta_k}(u_{\tilde a | \tilde D}, u_{\tilde b | \tilde D}) | + |  \partial_2 h_{\theta_k}(u_{\tilde a | \tilde D}, u_{\tilde b | \tilde D}) |) \le \frac{1}{4} (1 + \eta_{t-1, n} + \eta_{t-1, n}/2) = \frac{1}{4} \left(1 + \frac 3 2 \eta_{t-1, n}\right),
  \end{align*}
  using the bounds on $|\partial_1 h_\theta |, |\partial_2 h_\theta |$ from \cref{lem:FGMbounds} and $\sup_{\btheta \in \Theta_n} | \theta_k| \le \eta_{t-1, n}$ for $\theta_k$ in tree $t-1$.
  This gives \eqref{eq:FGMtreeDeriv} for $l = t-2$.

Similarly, one can prove the bound for a general tree $l$ via induction:
Assume that the bound in \eqref{eq:FGMtreeDeriv} holds for tree $l+1$.
According to the chain rule, the sum $\sum_{e \in E_{l+1} }| \partial_{\theta_e} u_{a|D} |$ is a sum of products of $\partial_1 h$ and $\partial_2 h$ terms and a $\partial_{\theta_e} h_{\theta_e}$ term.
A parameter $\theta_e$ may correspond to more than one of these products, depending on the vine structure (or, more concretely, depending on the number of ``appearances'' of $\theta_e$ in the recursive computation of $u_{a|D}$, see also \cref{sec:A3theory}).
Let $\theta_k$ be a fixed parameter in tree $l+1$, corresponding to edge $(a', b'; D')$. 
The last term of each of the products corresponding to $\theta_k$ in  $\sum_{e \in E_{l+1} }| \partial_{\theta_e} u_{a|D} |$ is $\partial_{\theta_k} h_{\theta_k} (u_{a'| D'}, u_{b'| D'} )$ (or $\partial_{\theta_k} h_{\theta_k} (u_{b'| D'}, u_{a'| D'} )$).
Each of these products yields two products in $\sum_{e \in E_l }| \partial_{\theta_e} u_{a|D} |$ by replacing $\partial_{\theta_k} h_{\theta_k} (u_{a'| D'}, u_{b'| D'} )$ by $\partial_1 h_{\theta_k} (u_{a'| D'}, u_{b'| D'} ) \partial_\theta h_\theta(u,v)$ and $\partial_2 h_{\theta_k} (u_{a'| D'}, u_{b'| D'} ) \partial_{\tilde \theta} h_{\tilde \theta}(\tilde u, \tilde v)$ (since we differentiate $h_{\theta_k} (u_{a'| D'}, u_{b'| D'} )$ w.r.t.~its first and second argument).
$\theta, \tilde \theta$ are the parameters corresponding to $u_{a'| D'}$ and $u_{b'| D'}$, and $u,v, \tilde u, \tilde v$ are the corresponding conditional variables.
If the given term in $\sum_{e \in E_l }| \partial_{\theta_e} u_{a|D} |$ was bounded by some $c$, the sum of the two new terms is therefore now bounded by 
\[
c \,(| \partial_1 h_{\theta_k} (u_{a'| D'}, u_{b'| D'} ) | + | \partial_2 h_{\theta_k} (u_{a'| D'}, u_{b'| D'} ) |) \le c \, (1 + \eta_{l+1, n} + \eta_{l+1,n}/2 )= c\left(1 + \frac 3 2 \eta_{l+1, n} \right).
\]
Taking the sum over all terms in $\sum_{e \in E_l }| \partial_{\theta_e} u_{a|D} |$, we obtain 
\[
\sum_{e \in E_l }| \partial_{\theta_e} u_{a|D} | \le \sum_{e \in E_{l + 1} }| \partial_{\theta_e} u_{a|D} | \left(1 + \frac 3 2 \eta_{l+1, n} \right) = \frac 1 4 \prod_{m = l + 2}^{t-1} \left( 1 + \frac 3 2 \eta_{m, n} \right)  \left(1 + \frac 3 2 \eta_{l+1, n} \right) =  \frac 1 4 \prod_{m = l + 1}^{t-1} \left( 1 + \frac 3 2 \eta_{m, n} \right).
\]
This concludes the proof of \eqref{eq:FGMtreeDeriv}.
A brief example illustrates the induction step in more detail:
Take a D-vine and consider a term in $\sum_{e \in E_2 }| \partial_{\theta_e} u_{a|D} |$ that is a derivative w.r.t.~$\theta_{13;2}$.
This term has the form
\[
\cdots \overbrace{| \partial_{\theta_{13;2}} h_{\theta_{13;2}}(u_{1|2}, u_{3|2})|}^{ \le 1/4}.
\]
This term yields two terms in the sum of derivatives w.r.t.~parameters in tree 1:
\[
\cdots \underbrace{\left[ \overbrace{| \partial_1 h_{\theta_{13;2}}(u_{1|2}, u_{3|2}) |}^{\le 1 + \eta_{2,n}} \overbrace{ |\partial_{\theta_{12}} h_{\theta_{12}}(u_1, u_2)|}^{\le 1/4} + \overbrace{| \partial_2 h_{\theta_{13;2}}(u_{1|2}, u_{3|2}) |}^{\le \eta_{2,n}/2} \overbrace{ |\partial_{\theta_{23}} h_{\theta_{23}}(u_3, u_2)|}^{\le 1/4} \right]}_{\le 1/4 (1 + (3/2) \eta_{2,n})} .
\]
Applying this bound to all terms in $\sum_{e \in E_2 }| \partial_{\theta_e} u_{a|D} |$ yields $\sum_{e \in E_1 }| \partial_{\theta_e} u_{a|D} | \le \sum_{e \in E_2 }| \partial_{\theta_e} u_{a|D} |(1 + (3/2) \eta_{2,n})$. 
\end{proof}
 \textcolor{red}{
\begin{lemma}\label{lem:FGM_J} 
  Consider a $d_n$-dimensional FGM vine, where all parameters in a given tree level $t$ share the same true value, denoted by $\theta_t^*$. Assume $\max_{1 \le t \le d_n - 1} | \theta_t^*| \le \rho$ and let $\delta_\rho \coloneqq 1 - \rho$.
  For any parameter $\theta_j$ in any tree $t = 1, \ldots, d_n - 1$, it holds that
  \[
  \sum_{k=1}^{j-1} \left| \E\left[ \frac{\partial}{\partial \theta_k} \phi(\bU; \btheta^*)_j  \right] \right|
  \le \delta_\rho^{-1} | \theta_t^*| \sum_{l=1}^{t-1}  \prod_{m = l + 1}^{t-1} \left( 1 + \frac 3 2 |\theta_m^*| \right).
  \]
\end{lemma}}
\begin{proof}
  For $\theta_j$ in tree $t$ belonging to edge $(a,b; D)$ and a fixed lower-tree parameter $\theta_k$, using the formulas from \cref{lem:FGMbounds}, it holds that
  \[
  \frac{\partial}{\partial \theta_k} \phi(\bu; \btheta^*)_j
  =  \partial_{\theta_k} \frac{z(\bu)}{1+\theta_t^* z(\bu)} = \partial_z \frac{z(\bu)}{1+\theta_t^* z(\bu)}  \partial_{\theta_k} z(\bu)
  = \frac{\partial_{\theta_k} z(\bu)}{(1+\theta_t^* z(\bu))^2} 
  \]
  with $z(\bu) = a(u_{a|D}) a(u_{b|D}) \in [-1, 1]$ and $a(u) = 1-2u$.
  Let $c_{\btheta^*}$ denote the density of the true model and let $c_{\btheta^*, \theta_j^* = 0}$ be the density of the same model except for $\theta_j^* = 0$.
  The densities are the products of the pair copula densities and since $c_\theta(u,v)=1+\theta z$, we have $c_{\btheta^*} = c_{\btheta^*, \theta_j^* = 0} \cdot (1 + \theta_j^* z)$.
  Let $\E_0$ denote the expectation under this model.
  Note that $\E_0[z(\bu)] = 0$, as under this model, $a(u_{a|D})$ and $a(u_{b|D})$ are independent uniform random variables on $[-1, 1]$.
  Now
  \[
  \E\left[ \frac{\partial}{\partial \theta_k} \phi(\bU; \btheta^*)_j\right] 
  = \int_{[0,1]^{d_n}}  \frac{\partial_{\theta_k} z(\bu)}{(1+\theta_t^* z(\bu))^2} c_{\btheta^*}(\bu) \mathrm d \bu
  = \int_{[0,1]^{d_n}}  \frac{\partial_{\theta_k} z(\bu)}{1+\theta_t^* z(\bu)} c_{\btheta^*, \theta_j^* = 0}(\bu) \mathrm d \bu = \E_0\left[ \frac{\partial_{\theta_k} z(\bU)}{1+\theta_t^* z(\bU)} \right].
  \]
  We may substract $\E_0[\partial_{\theta_k} z(\bU)] = \partial_{\theta_k} \E_0[z(\bU)] = 0$ and obtain
  \[
  \left| \E_0\left[ \frac{\partial_{\theta_k} z(\bU)}{1+\theta_t^* z(\bU)} - \partial_{\theta_k} z(\bU) \right]  \right|
  = \left| \E_0\left[ \left( \frac{1}{1+\theta_t^* z(\bU)} -  1 \right) \partial_{\theta_k} z(\bU) \right] \right|
  = \left| \E_0\left[  \frac{- \theta_t^* z(\bU)}{1+\theta_t^* z(\bU)}    \partial_{\theta_k} z(\bU) \right] \right|
  \le \frac{|\theta_t^*|}{\delta_\rho } \E_0 [| \partial_{\theta_k} z(\bU) |],
  \]
  since $| \theta_t^* z(\bu) | \le | \theta_t^* |$ and $|1 + \theta_t^* z(\bu) | \ge | 1 - |\theta_t^*| \ge \delta_\rho$.
  By the definition of $z(\bu)$ and $\partial_u a(u) = -2$, we have
  \[
  | \partial_{\theta_k} z(\bu) | \le 2 | \partial_{\theta_k} u_{a|D} | + 2 | \partial_{\theta_k} u_{b|D} | .
  \]
  Summing over all parameters in all trees $l = 1, \ldots, t-1$ and applying the bound from \eqref{eq:FGMtreeDerivAll} in \cref{lem:FGM1deriv} with $\eta_{t,n}$ replaced by $|\theta_t^*|$, we obtain
  \[
  \sum_{k=1 }^{j-1} | \partial_{\theta_k} z(\bu) | \le \sum_{l=1}^{t-1}  \prod_{m = l + 1}^{t-1} \left( 1 + \frac 3 2 |\theta_m^*| \right),
  \]
  which implies the claim.
\end{proof}

\textcolor{red}{
  \begin{lemma}\label{lem:FGM2deriv} 
Consider a $d_n$-dimensional FGM vine, where all parameters in a given tree level $t$ share the same true value, denoted by $\theta_t^*$, and denote
\[
\eta_{t,n} \coloneqq \sup_{\btheta \in \Theta_n} \max_{e \in E_t} |\theta_e | \le \rho < 1, \qquad \delta_\rho \coloneqq 1 - \rho, \qquad P_{t,n} \coloneqq \sum_{l=1}^t \prod_{m = l + 1}^t \left( 1 + \frac 3 2 \eta_{m, n} \right).
\]
For any parameter $\theta_j$ in any tree $t = 1, \ldots, d_n - 1$, it holds that
\[
\sup_{\btheta \in \Theta_n} \sup_{\bu \in [0,1]^{d_n}} \sum_{k=1}^j \sum_{k'=1}^j \left| \frac{\partial^2}{\partial \theta_k \partial \theta_{k'} } \phi(\bu; \btheta)_j \right| \le 12 \delta_\rho^{-3}   (1 + P_{t-1, n})^2.
\]
\end{lemma} }

\begin{proof}
  For $\theta_j$ in tree $t$ belonging to edge $(a,b; D)$ and $k = k' = j$, \cref{lem:FGMbounds} gives $| \partial_{\theta_j \theta_j} s_{\theta_j} (u_{a|D} , u_{b|D}) | \le 12 \delta_\rho^{-3}$.
  For $k' = j, k \neq j$, we have, using the bound for$\sum_{k=1}^{j-1} | \partial_{\theta_k} u_{a|D} | \le P_{t-1, n}/4$ from \eqref{eq:FGMtreeDerivAll} in \cref{lem:FGM1deriv},
  \begin{align*}
   \sum_{k=1}^{j-1}| \partial_{\theta_k \theta_j} s_{\theta_j} (u_{a|D} , u_{b|D}) |
  & = \sum_{k=1}^{j-1} | \partial_{u \theta_j} s_{\theta_j} (u_{a|D} , u_{b|D}) \partial_{\theta_k} u_{a|D} +  \partial_{v \theta_j} s_{\theta_j} (u_{a|D} , u_{b|D}) \partial_{\theta_k} u_{b|D} | \\
  & \le 12 \delta_\rho^{-3} \left(\sum_{k=1}^{j-1}| \partial_{\theta_k} u_{a|D} | + \sum_{k=1}^{j-1}| \partial_{\theta_k} u_{b|D} |\right)
  \le  6 \delta_\rho^{-3} P_{t-1, n}.
  \end{align*}
  The same argument holds for $k = j, k' \neq j$, so the sum over all second derivatives with $(k = j \lor k'= j) \land k \neq k'$ is bounded by $12 \delta_\rho^{-3} P_{t-1, n}$.

  It remains to treat the case where both $\theta_k, \theta_{k'}$ belong to lower trees.
  In this case, the chain rule yields
  \[
  \partial_{\theta_k \theta_{k'}} s_{\theta_j} (u_{a|D} , u_{b|D}) 
  = \partial_{\theta_k} [\partial_u s_{\theta_j} (u_{a|D} , u_{b|D}) \partial_{\theta_{k'}} u_{a|D} + \partial_v s_{\theta_j} (u_{a|D} , u_{b|D}) \partial_{\theta_{k'}} u_{b|D} ]
  \]
  and, using the bounds from \cref{lem:FGMbounds},
  \begin{align*}
   & \partial_{\theta_k} [\partial_u s_{\theta_j} (u_{a|D} , u_{b|D}) \partial_{\theta_{k'}} u_{a|D}] \\
   & = \partial_u s_{\theta_j} (u_{a|D} , u_{b|D}) \partial_{\theta_k \theta_{k'}} u_{a|D} + \partial_{\theta_k u } s_{\theta_j} (u_{a|D} , u_{b|D}) \partial_{\theta_{k'}} u_{a|D}  \\
   & = \underbrace{ \partial_u s_{\theta_j} (u_{a|D} , u_{b|D}) }_{| \ldots | \le 2 \delta_\rho^{-2} } \partial_{\theta_k \theta_{k'}} u_{a|D}
   + \underbrace{\partial_{uu} s_{\theta_j} (u_{a|D} , u_{b|D})}_{| \ldots |\le 12 \delta_\rho^{-3}} \partial_{\theta_k} u_{a|D} \partial_{\theta_{k'}} u_{a|D}
   + \underbrace{\partial_{vu} s_{\theta_j} (u_{a|D} , u_{b|D})}_{| \ldots | \le 12 \delta_\rho^{-3}} \partial_{\theta_k} u_{b|D} \partial_{\theta_{k'}} u_{a|D},
  \end{align*}
  and similar for $\partial_{\theta_k} [\partial_v s_{\theta_j} (u_{a|D} , u_{b|D}) \partial_{\theta_{k'}} u_{b|D}]$.
  Summing over all $k, k'$ and applying $\sum_{k=1 }^{j-1} | \partial_{\theta_k}  u_{a|D} | \le P_{t-1, n}/4$ from \eqref{eq:FGMtreeDerivAll} in \cref{lem:FGM1deriv} and \cref{lem:FGM2deriv2}, we obtain
  \[
  \sum_{k=1}^{j-1} \sum_{k' = 1}^{j-1} | \partial_{\theta_k \theta_{k'}} s_{\theta_j} (u_{a|D} , u_{b|D}) |
  \le 2 \left( 2 \delta_\rho^{-2}  \sum_{k=1}^{j-1} \sum_{k' = 1}^{j-1} | \partial_{\theta_k \theta_{k'}} u_{a|D} |  
  + 12 \delta_\rho^{-3} 2 (P_{t-1, n} /4)^2\right)
  = 4 \delta_\rho^{-2} P_{t-1, n}^2
  + 3 \delta_\rho^{-3}P_{t-1, n}^2 \le 7 \delta_\rho^{-3}P_{t-1, n}^2.
  \]
  Combining the three bounds, we have
  \begin{align*}
    \sum_{k=1}^j \sum_{k' = 1}^j | \partial_{\theta_k \theta_{k'}} s_{\theta_j} (u_{a|D} , u_{b|D}) |
    & = | \partial_{\theta_j \theta_j} s_{\theta_j} (u_{a|D} , u_{b|D}) | + 2 \sum_{k=1}^{j-1}  | \partial_{\theta_k \theta_j} s_{\theta_j} (u_{a|D} , u_{b|D}) |  + \sum_{k=1}^{j-1} \sum_{k' = 1}^{j-1} | \partial_{\theta_k \theta_{k'}} s_{\theta_j} (u_{a|D} , u_{b|D}) | \\
    & \le 12 \delta_\rho^{-3} + 12 \delta_\rho^{-3} P_{t-1, n} + 7 \delta_\rho^{-3} P_{t-1, n}^2
    \le 12 \delta_\rho^{-3} (1 + 2 P_{t-1, n} + P_{t-1, n}^2) = 12 \delta_\rho^{-3}   (1 + P_{t-1, n})^2.
  \end{align*}
  Note that one could improve the constant, but this is not relevant for our application.
\end{proof}

\textcolor{red}{
\begin{lemma}\label{lem:FGM2deriv2}
  Consider a $d_n$-dimensional FGM vine, where all parameters in a given tree level $t$ share the same true value, denoted by $\theta_t^*$, and denote
  \[
  \eta_{t,n} \coloneqq \sup_{\btheta \in \Theta_n} \max_{e \in E_t} |\theta_e | \le \rho < 1, \qquad P_{t,n} \coloneqq \sum_{l=1}^t \prod_{m = l + 1}^t \left( 1 + \frac 3 2 \eta_{m, n} \right).
  \]
For any parameter $\theta_j$ belonging to edge $(a,b;D)$ in any tree $t = 1, \ldots, d_n - 1$, it holds that
  \[
   \sup_{\btheta \in \Theta_n} \sup_{\bu \in [0,1]^{d_n}}  \sum_{k=1}^j \sum_{k' = 1}^j | \partial_{\theta_k \theta_{k'}} h_{\theta_j}(u_{a|D}, u_{b|D}) |  \le P_{t , n}^2.
  \]
\end{lemma}}

\begin{proof}
  We prove the claim by induction.

  \textbf{Base case:}
  In tree $t=1$, we have 
  \[
  \sum_{k=1}^j \sum_{k' = 1}^j | \partial_{\theta_k \theta_{k'}} h_{\theta_j}(u_a, u_b)|
  = | \partial_{\theta_j \theta_j} h_{\theta_j}(u_a, u_b) | = 0
  \]
  by \cref{lem:FGMbounds}.

  \textbf{Induction step:}
  Assume that the claim holds for all parameters in tree $t - 1$.
  For some $\theta_j$ in tree $t$ with $h_{\theta_j}(u, v)$ with conditional $u,v$, it holds that
  \begin{align*}
    \partial_{\theta_k \theta_{k'}} h_{\theta_j}(u, v)
    & =  \partial_{\theta_k} [\partial_1 h_{\theta_j}(u, v) \partial_{\theta_{k'}} u + \partial_2 h_{\theta_j}(u, v) \partial_{\theta_{k'}} v] \\
    & \quad  + \partial_{\theta_k} [\partial_{\theta_j}h_{\theta_j}(u, v) ] \ind_{k' = j \land k \neq j}
    + \partial_{\theta_{k'}} [\partial_{\theta_j}h_{\theta_j}(u, v) ] \ind_{k = j \land k' \neq j } \\
    & \quad + \underbrace{\partial_{\theta_j \theta_j} h_{\theta_j}(u, v)\ind_{k = j \land k' = j}}_{=0 \text{ by \cref{lem:FGMbounds}}}.
  \end{align*}
  Writing $h \coloneqq h_{\theta_j}(u, v)$, the chain rule yields
  \begin{align*}
      \partial_{\theta_k} [\partial_1 h \, \partial_{\theta_{k'}} u + \partial_2 h \, \partial_{\theta_{k'}} v] 
   & =  \partial_{11} h \, \partial_{\theta_k} u \, \partial_{\theta_{k'}} u 
   + \partial_{21} h \,\partial_{\theta_k} v \,\partial_{\theta_{k'}} u 
   + \partial_1 h \, \partial_{\theta_k \theta_{k'}} u \\
   & \quad + \partial_{22} h \, \partial_{\theta_k} v \, \partial_{\theta_k'} v 
   + \partial_{12} h \, \partial_{\theta_k} u \, \partial_{\theta_k'} v + \partial_2 h \, \partial_{\theta_k \theta_{k'}} v
  \end{align*}
  and 
  \begin{align*}
  \partial_{\theta_k} [\partial_{\theta_j}h_{\theta_j}(u, v) ] \ind_{k' = j \land k \neq j}
  & = ( \partial_{1 \theta_j} h \, \partial_{\theta_k} u  + \partial_{2 \theta_j} h \, \partial_{\theta_k} v ) \ind_{k' = j \land k \neq j} , \\
  \partial_{\theta_{k'}} [\partial_{\theta_j}h_{\theta_j}(u, v) ] \ind_{k = j \land k' \neq j }
  & = ( \partial_{1 \theta_j} h \, \partial_{\theta_{k'}} u  + \partial_{2 \theta_j} h \, \partial_{\theta_{k'}} v ) \ind_{k = j \land k' \neq j }. 
  \end{align*}
  \cref{lem:FGMbounds} gives the bounds
  \[
  |\partial_1 h| + |\partial_2 h | \le 1 + \frac 3 2 \eta_{t, n}, \quad | \partial_{11} h | \le 2 \eta_{t, n}  , \quad  \partial_{22} h  = 0, \quad | \partial_{21} h | \le 2 \eta_{t, n} , \quad  |\partial_{1 \theta_j} h | \le 1  , \quad | \partial_{2 \theta_j} h | \le 1/2 .
  \]
  Denote by $I_t$ the set of all indices $k$ with $\theta_k \in E_l, l < t$.
  Summing the previous displays over all pairs $(k, k')$, using the induction hypothesis and applying $\sum_{k=1}^{j-1} | \partial_{\theta_k} u_{a|D} | \le P_{t - 1, n}/4$ from \eqref{eq:FGMtreeDerivAll} in \cref{lem:FGM1deriv}, we obtain
 \begin{align*}
  \sum_{k = 1}^j \sum_{k' = 1}^j | \partial_{\theta_k \theta_{k'}} h_{\theta_j}(u, v) |
  & \le 2 (|\partial_{1 \theta_j} h | + |\partial_{2 \theta_j} h |)P_{t - 1, n}/4 
  + (| \partial_{11} h | + 2 | \partial_{12} h |) (P_{t - 1, n}/4)^2 \\
  & \quad + (|\partial_1 h| + |\partial_2 h |) \sum_{k \in I_t} \sum_{k'  \in I_t}  | \partial_{\theta_k \theta_{k'}} u | \\
  & \le \frac 3 4  P_{t - 1, n} + \frac {3}{8}  \eta_{t,n} P_{t - 1, n}^2 + \left( 1 + \frac 3 2 \eta_{t, n}\right) P_{t-1, n}^2
 \end{align*}
 Denote $a_{t,n} = 1 + (3/2) \eta_{t,n}$ and note that
 \[
 P_{t,m} = \sum_{l=1}^{t-1} \prod_{m = l + 1}^{t-1} a_{m,n}  a_{t,n} + \prod_{m=t+1}^t a_{m,t} = a_{t.n} P_{t-1,m} + 1 , \qquad P_{t,m}^2 = a_{t,n}^2 P_{t-1,n}^2 + 2 a_{t,n} P_{t-1, n} + 1, 
 \]
 and $a_{t,n}^2 = 1 + 3 \eta_{t,n} + (9/4) \eta_{t,n}^2$.
 We therefore have
 \begin{align*}
   \sum_{k = 1}^j \sum_{k' = 1}^j | \partial_{\theta_k \theta_{k'}} h_{\theta_j}(u, v) |
  & \le P_{t-1,n}^2 ( 1 + (15/8) \eta_{t,n}) + (3/4) P_{t-1, n}
  \le a_{t,n}^2 P_{t-1,n}^2 + 2 a_{t,n} P_{t-1, n} + 1 = P_{t,n}^2.
 \end{align*}

\end{proof} \newpage
\section{Simulation study--additional figures}

The supplement contains the following additional figures for the simulation study:
\begin{itemize}
  \item \cref{fig:par_gauss_gumbel_norm}: Parameter estimation for Gaussian and Gumbel vines, mean maximum norm of estimation error for different proportions of $d$ and $n$. 
  \item \cref{fig:par_dvine}: Parameter estimation for Gaussian D-vine with $\theta_t^* = 0.5/\sqrt{t+1}$, mean maximum norm of estimation error with different normalizations.
  \item \cref{fig:par_student_norm}: Parameter estimation for Student's t vines, mean maximum norm of estimation error for different proportions of $d$ and $n$.
  \item \cref{fig:par_student}: Parameter estimation for Student's t vines, sum of estimation errors.
  \item \cref{fig:par_estim_trunc}: 2-truncated Gaussian C-vine, sum of estimation errors. 
  \item \textcolor{red}{\cref{fig:par_mix}: Parameter estimation for a mixture of two rotated Gumbels, mean maximum norm of estimation error for different proportions of $d$ and $n$.}
  \item \textcolor{red}{\cref{fig:par_mix2}: Parameter estimation for a mixture of two rotated Gumbels, sum of estimation errors.}
  \item \cref{fig:par_margins_student_gumbel}: Parameter estimation for Gumbel and Student’s t vines with nonparametric estimation of margins, sum of estimation errors.
\end{itemize}

\begin{figure}
  \centering
  \includegraphics[width = \textwidth]{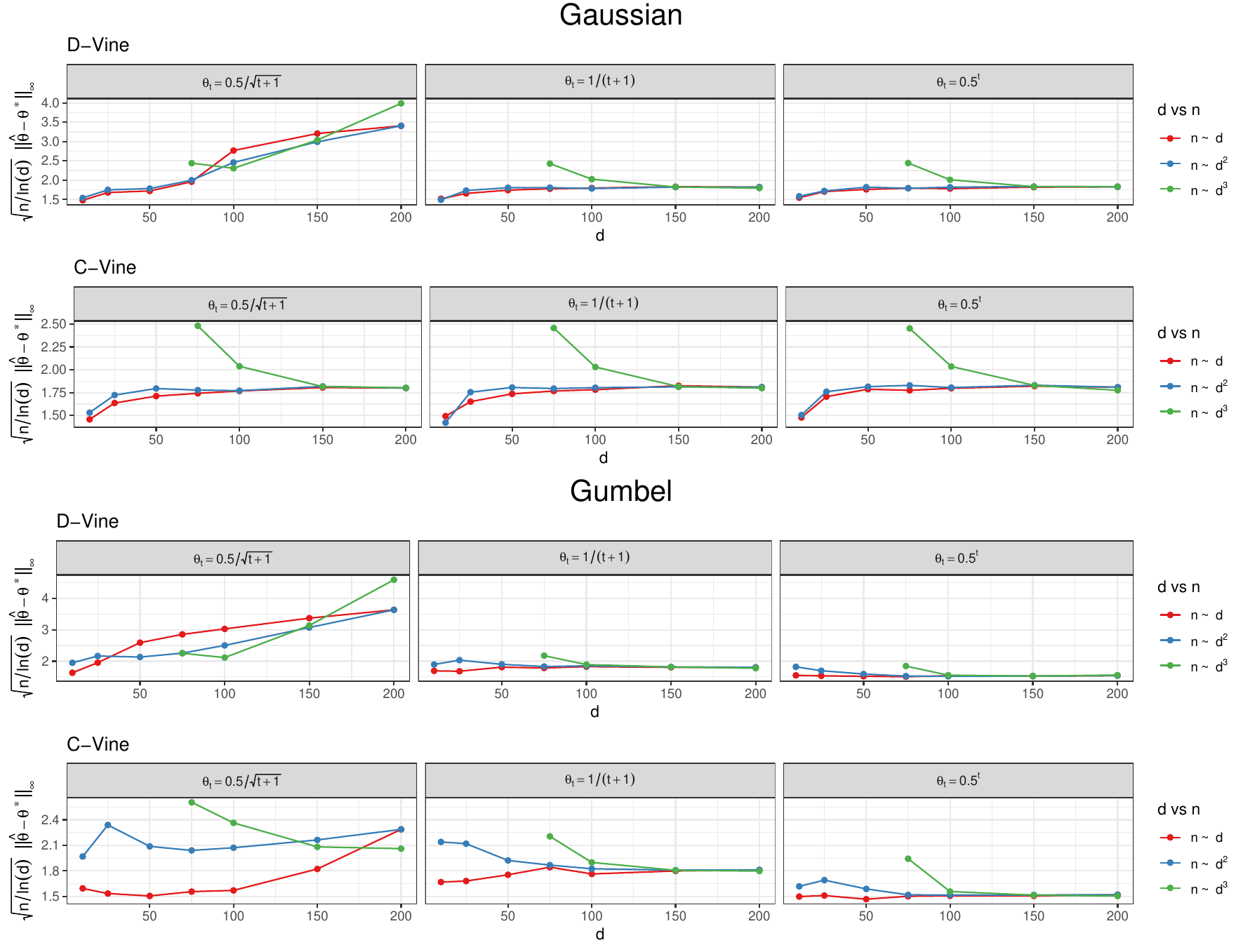}
  \caption{Parameter estimation for Gaussian and Gumbel vines, mean maximum norm of estimation error for different proportions of $d$ and $n$. Parameterization: $\theta = \rho$ for Gaussian and $\text{Gumbel}(\theta + 1)$ for $\theta \ge 0$, $\text{Gumbel}_{90}(-\theta+1)$ for $\theta <0$. }
  \label{fig:par_gauss_gumbel_norm}
\end{figure}

\begin{figure}
  \centering
  \includegraphics[width = \textwidth]{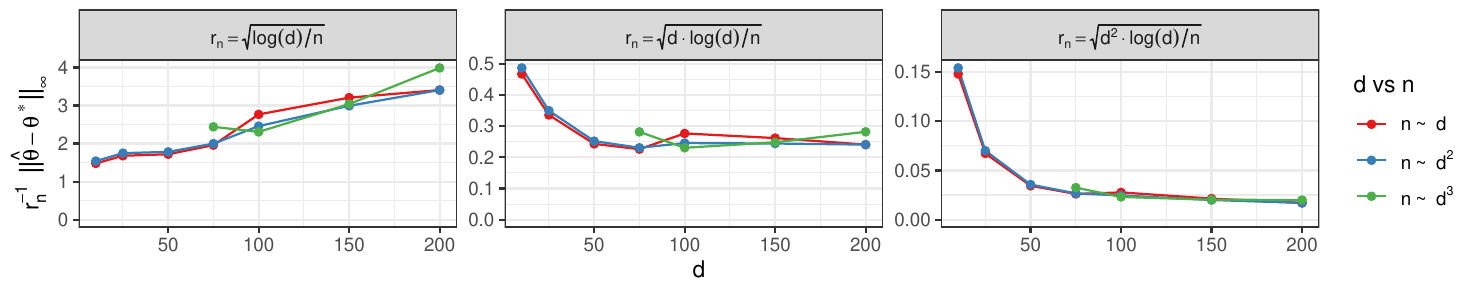}
  \caption{Parameter estimation for Gaussian D-vine with $\theta_t^* = 0.5/\sqrt{t+1}$, mean maximum norm of estimation error with different normalizations.}
  \label{fig:par_dvine}
\end{figure}

\begin{figure}
  \centering
  \includegraphics[width = \textwidth]{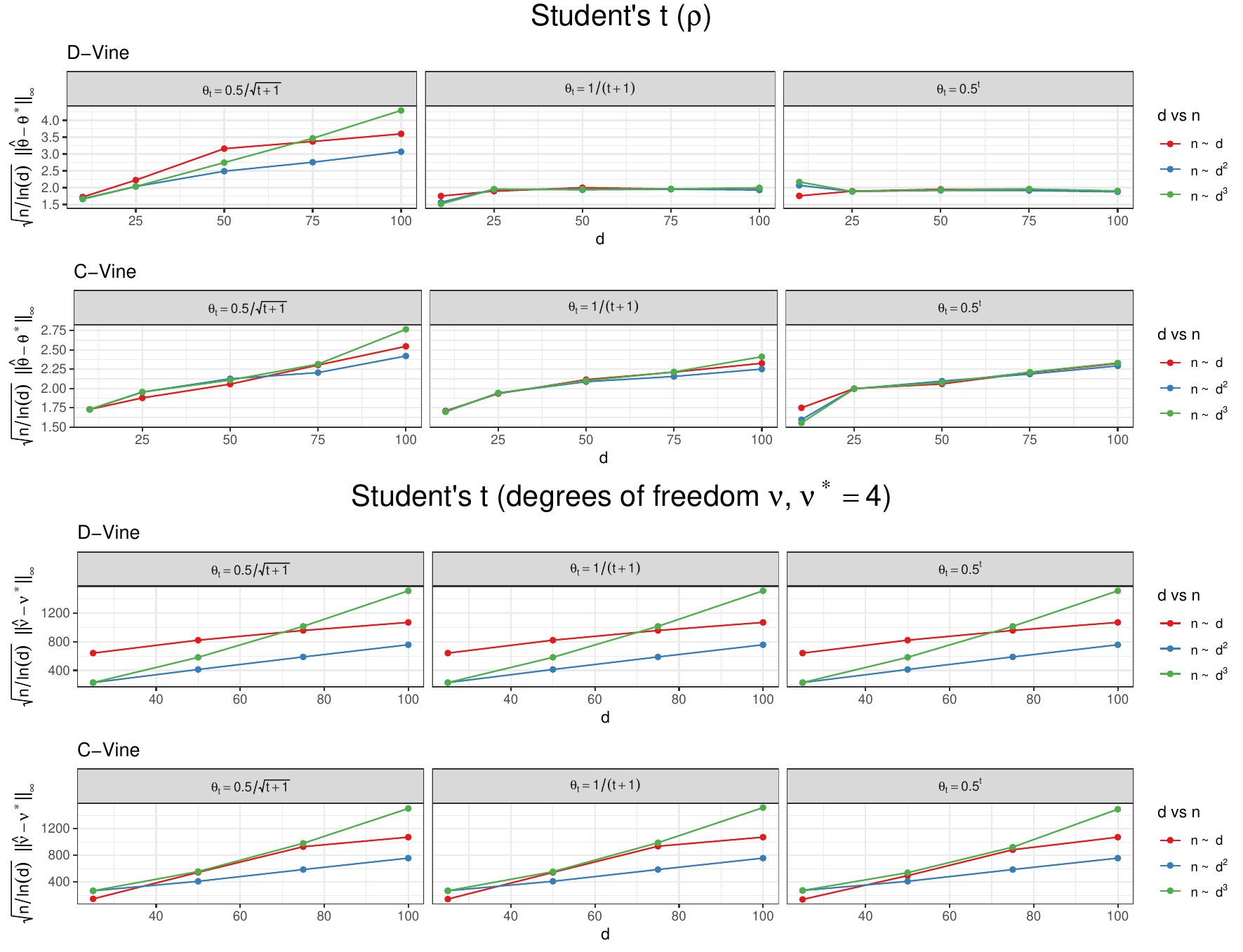}
  \caption{Parameter estimation for Student's t vines, mean maximum norm of estimation error for different proportions of $d$ and $n$. }
  \label{fig:par_student_norm}
\end{figure}

\begin{figure}
  \centering
  \includegraphics[width = \textwidth]{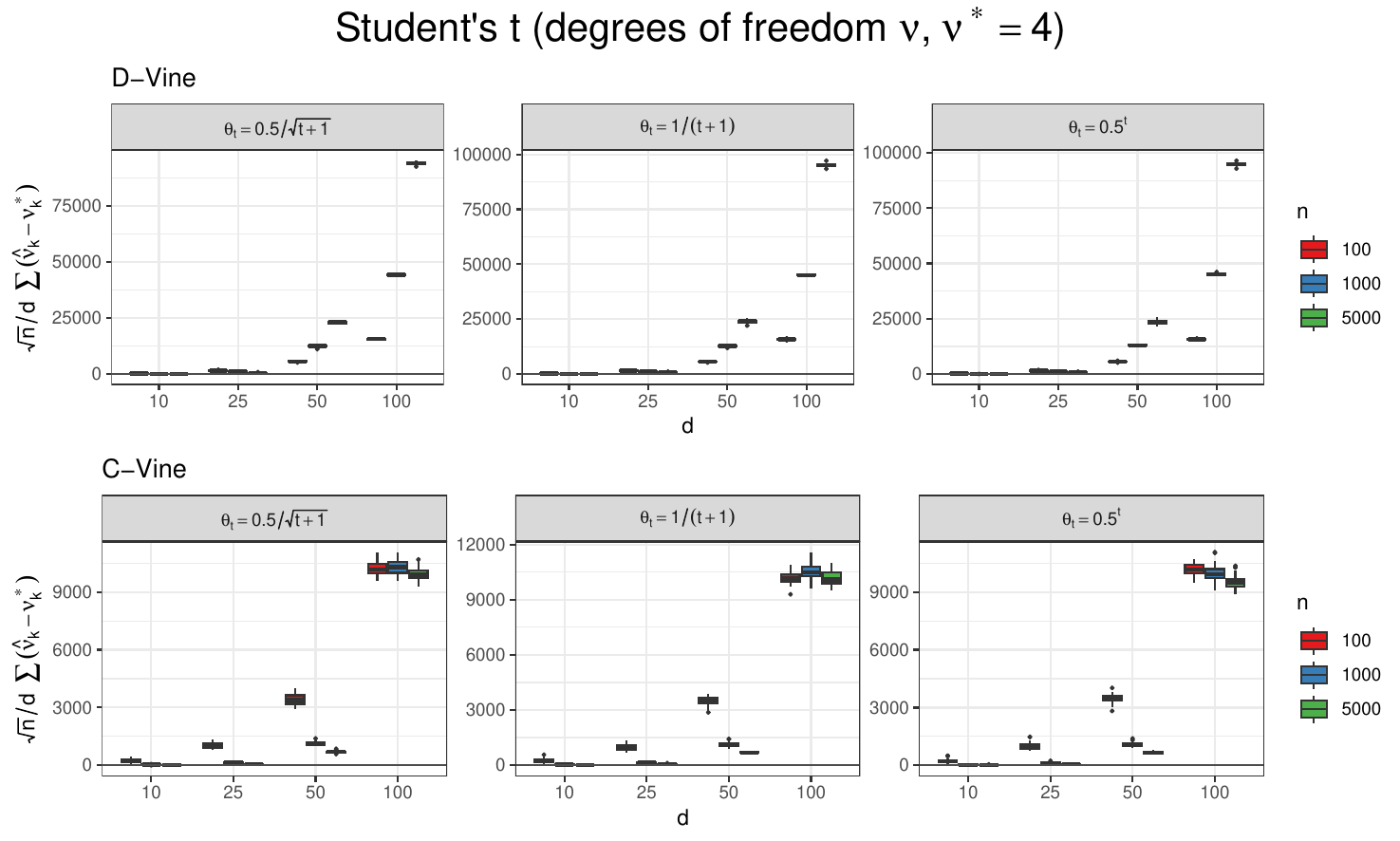}
  \caption{Parameter estimation for Student's t vines, sum of estimation errors. Each boxplot represents $N=100$ replications.}
  \label{fig:par_student}
\end{figure}

\begin{figure}
  \centering
  \includegraphics[width = 0.5\textwidth]{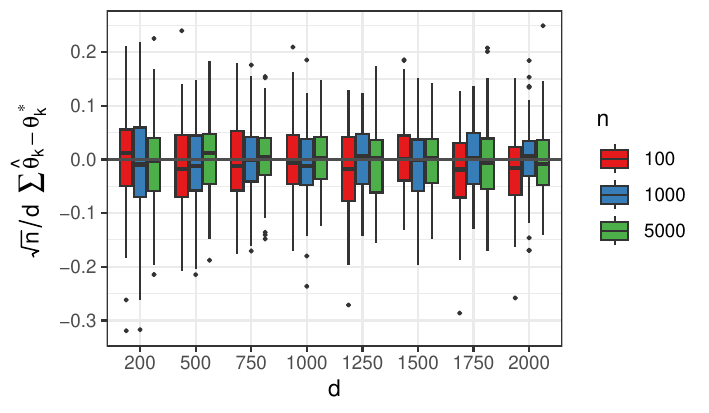}
  \caption{2-truncated Gaussian C-vine with $\theta_t^* = 1/(t+1)$, sum of estimation errors. Each boxplot represents $N=100$ replications.}
  \label{fig:par_estim_trunc}
\end{figure}

\begin{figure}
  \centering
  \includegraphics[width = \textwidth]{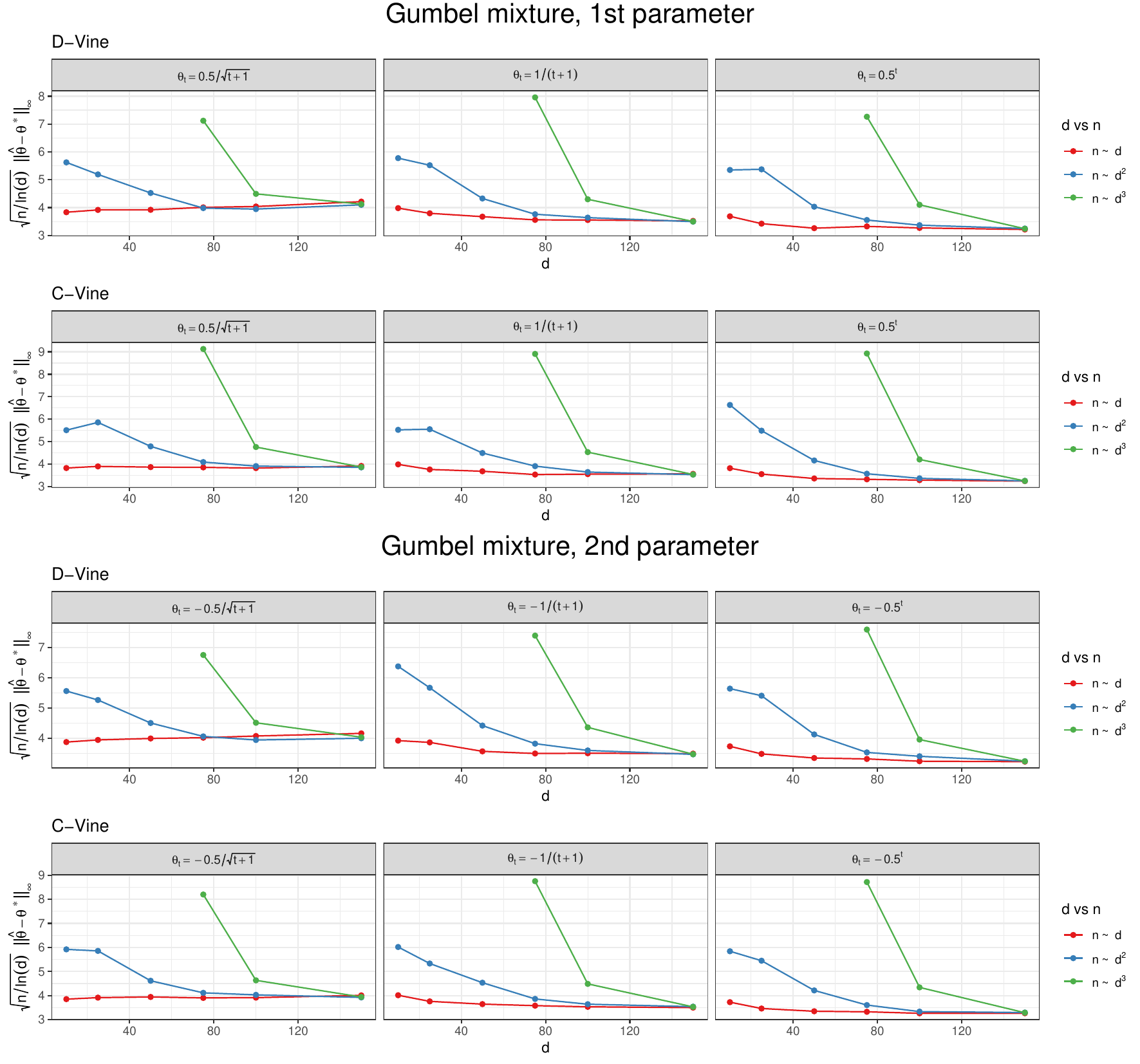}
  \caption{Parameter estimation for a mixture of two rotated Gumbels, mean maximum norm of estimation error for different proportions of $d$ and $n$. Parameterization: each pair copula is a 50--50 mixture of $\text{Gumbel}(\theta_1 + 1)$ for $\theta_1 \ge 0$, $\text{Gumbel}_{90}(-\theta_1+1)$ for $\theta_1 <0$ (first component) and $\text{Gumbel}_{180}(\theta_2 + 1)$ for $\theta_2 \ge 0$, $\text{Gumbel}_{270}(-\theta_2+1)$ for $\theta_2 <0$ (second component). We always set $\theta^*_1 = \theta^*_2$.}
  \label{fig:par_mix}
\end{figure}

\begin{figure}
  \centering
  \includegraphics[width = \textwidth]{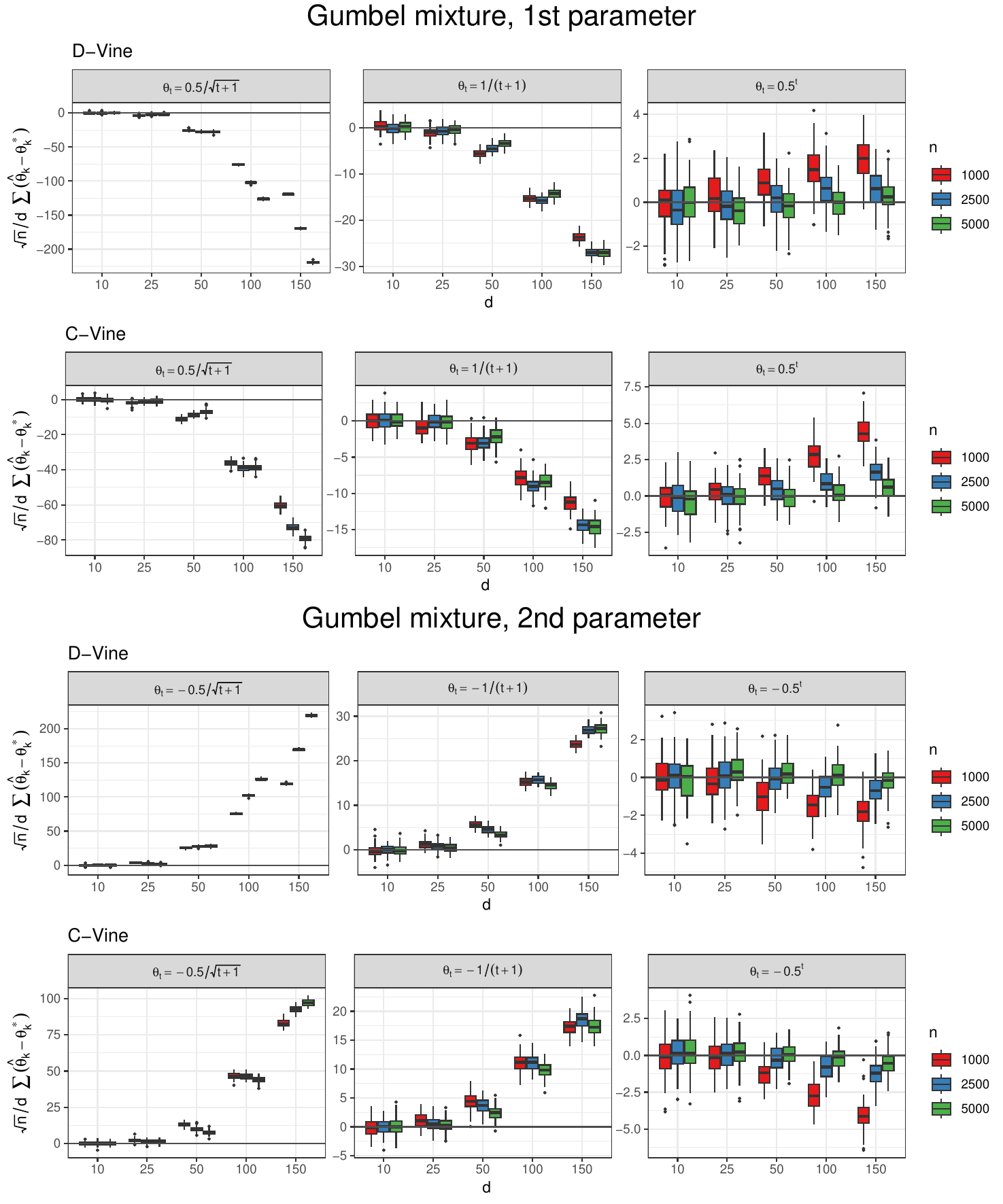}
  \caption{Parameter estimation for a mixture of two rotated Gumbels, sum of estimation errors. Each boxplot represents $N=100$ replications. Parameterization: each pair copula is a 50--50 mixture of $\text{Gumbel}(\theta_1 + 1)$ for $\theta_1 \ge 0$, $\text{Gumbel}_{90}(-\theta_1+1)$ for $\theta_1 <0$ (first component) and $\text{Gumbel}_{180}(\theta_2 + 1)$ for $\theta_2 \ge 0$, $\text{Gumbel}_{270}(-\theta_2+1)$ for $\theta_2 <0$ (second component). We always set $\theta^*_1 = \theta^*_2$.}
  \label{fig:par_mix2}
\end{figure}

\begin{figure}
  \centering
  \includegraphics[width = \textwidth]{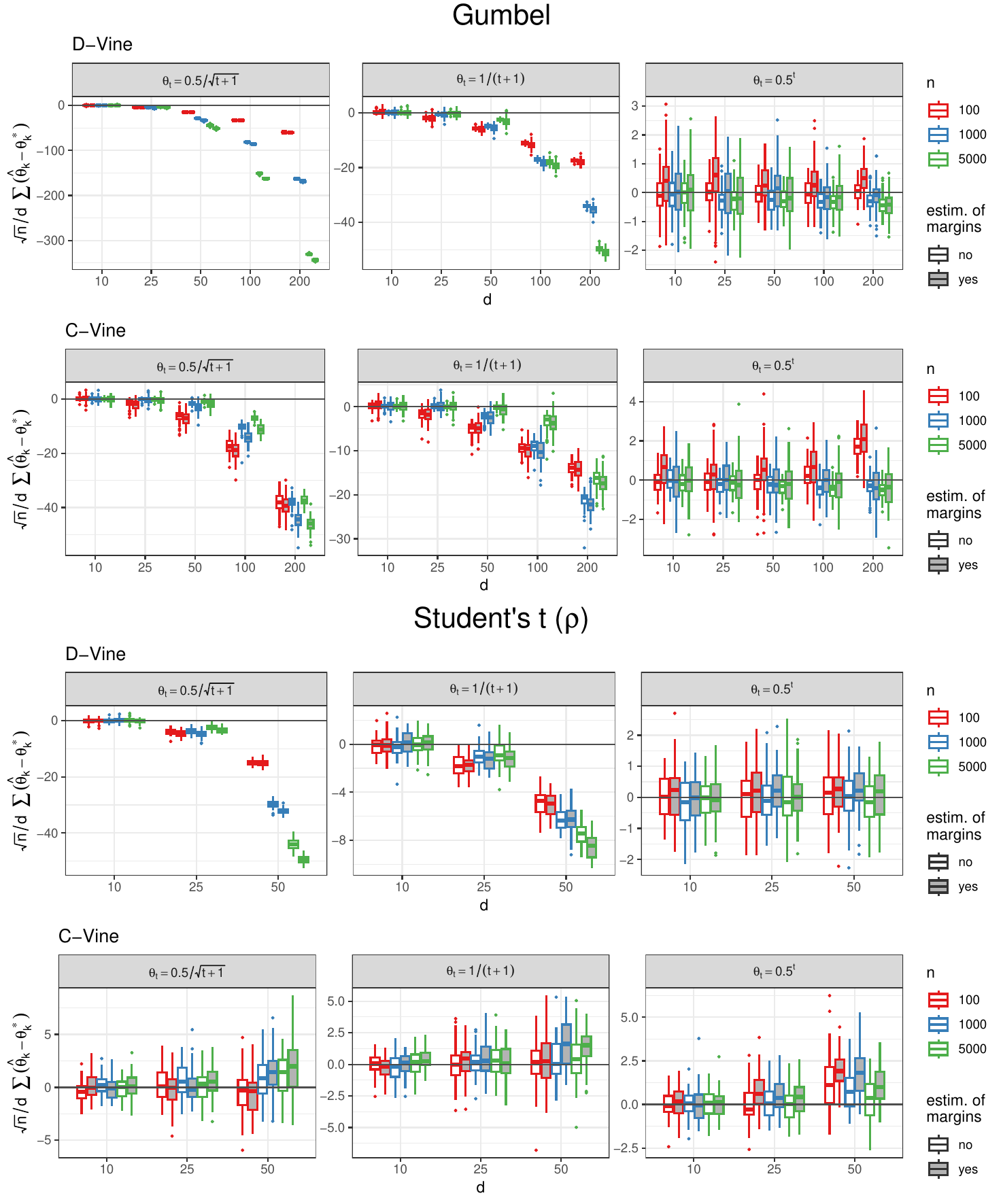}
  \caption{Parameter estimation for Gumbel and Student’s t vines with nonparametric estimation of margins, sum of estimation errors. Each boxplot represents $N=100$ replications.}
  \label{fig:par_margins_student_gumbel}
\end{figure}

\end{document}